\title{Iwasawa theory for vertex-weighted graphs}
\date{}
\author{Ryosuke Murooka}
\address[Ryosuke Murooka]{Graduate School of Mathematics, Nagoya University, Furo-cho, Chikusaku, Nagoya, 464-8601, Japan}
\email{ryosuke.murooka.c1@math.nagoya-u.ac.jp}
\author{Sohei Tateno}
\address[Sohei Tateno]{Graduate School of Natural Science \& Technology, Kanazawa University, Kakuma-machi, Kanazawa, 920-1192, Japan}
\email{inu.kaimashita@gmail.com}
\keywords{graph theory, Iwasawa theory, vertex-weighted
graphs, Iwasawa-type formula, Kida’s formula}
\subjclass[2020]{Primary 5C22; Secondary 11R23, 5C25}
\newcommand\numberthis{\addtocounter{equation}{1}\tag{\theequation}} %%%%%%
 \numberwithin{equation}{section}
\theoremstyle{plain}% plain
\newtheorem{theorem}{Theorem}[section]
\newtheorem{proposition}[theorem]{Proposition}
\newtheorem{lemma}[theorem]{Lemma}
\newtheorem{corollary}[theorem]{Corollary}
\theoremstyle{definition}% definition
\newtheorem{definition}[theorem]{Definition}
\newtheorem{definitions}[theorem]{Definitions}
\newtheorem{example}[theorem]{Example}
\theoremstyle{remark}% remark
\newtheorem{remark}[theorem]{Remark}
\newcommand{\rank}{\operatorname{rank}\nolimits}
\newcommand{\inc}{\operatorname{inc}\nolimits}
\newcommand{\adj}{\operatorname{adj}\nolimits}
\newcommand{\val}{\operatorname{val}\nolimits}
\newcommand{\Ind}{\operatorname{Ind}\nolimits}
\newcommand{\GL}{\operatorname{GL}\nolimits}
\newcommand{\Gal}{\operatorname{Gal}\nolimits}
\newcommand{\Res}{\operatorname{Res}\nolimits}
\newcommand{\ord}{\operatorname{ord}\nolimits}
\newcommand{\Aut}{\operatorname{Aut}\nolimits}
\newcommand{\triv}{\operatorname{triv}\nolimits}
\newcommand{\reg}{\operatorname{reg}\nolimits}
\begin{document}
\maketitle
\begin{abstract}
\hspace{4pt}Chung-Langlands established a matrix-tree theorem for positive-real valued vertex-weighted graphs, and Wu-Feng-Sato developed a theory of Ihara zeta functions for those graphs.
In this paper, generalizing and refining these previous works, we initiate the Iwasawa theory for vertex-weighted graphs, which is a generalization of the Iwasawa theory for graphs initiated by Gonet and Valli\`{e}res independently.
First, we generalize the matrix-tree theorem by Chung-Langlands to arbitrary field-valued vertex-weighted graphs.
Second, we refine and prove the so-called decomposition formula for vertex-weighted graphs and edge-weighted graphs without any assumption.
Applying these results, we prove the Iwasawa-type formula and a refinement of Kida's formula for $\mathbb{Z}_p^d$-towers of vertex-weighted graphs.
Our refinement of the decomposition formulas allows us to estimate the root-wise growth of weighted complexities in $\mathbb{Z}_p^d$-towers.
We also provide several numerical examples.
\end{abstract}
\tableofcontents
\section{Introduction}
Let $p$ be a fixed prime number.
Let $\mathbb{Q}_p$ denote the completion of the field of rational numbers $\mathbb{Q}$ equipped with the $p$-adic absolute value, and let $\mathbb{Z}_p$ denote its valuation ring.
Fix an algebraic closure $\overbar{\mathbb{Q}}_p$ of $\mathbb{Q}_p$, and fix an embedding $\mathbb{Q}_p\hookrightarrow \overbar{\mathbb{Q}}_p$.
Let $\val_p$ be the $p$-adic valuation of $\mathbb{Q}_p$ normalized so that $\val_p(p)=1$, and we extend this to the valuation of $\overbar{\mathbb{Q}}_p$ uniquely.
Let $X$ be a finite connected graph with $\overbar{\mathbb{Q}}_p$-valued weights on its edges, and let $\kappa(X)$ denote the edge-weighted complexity of $X$ defined in \cite{MS03}.
In \cite{AMT}, Adachi, Mizuno, and the second author discover that there is an Iwasawa-type formula for $\mathbb{Z}_p^d$-towers of $\overbar{\mathbb{Q}}_p$-valued edge-weighted graphs, that is, 
for a $\mathbb{Z}_p^d$-tower
\begin{equation}\label{eq_34}
X=X_0 \leftarrow X_1 \leftarrow X_2\leftarrow \cdots \leftarrow X_n  \leftarrow \cdots \leftarrow X_{\infty}
\end{equation}
over $X$,
\begin{equation}\label{eq_35}
\val_p(\kappa(X_n))=(\mu p^n+\lambda n)p^{(d-1)n}+\left(\sum_{i=1}^{d-1}(\mu_ip^n+\lambda_in)p^{(d-1-i)n}\right)+\nu
\end{equation}
holds for all $n \gg 0$, where $\lambda$ is an integer and where $\mu$, $\mu_i$, $\lambda_i$, $\nu$ are rational numbers. This is a generalization of the Iwasawa-type formula for graphs initially proved by Gonet \cite{Gon22} and Valli\`{e}res \cite{Val21} independently. See \cite{AMT} for more detailed histories of the Iwasawa theory for graphs.

In this paper, we show that the same theory works for $\overbar{\mathbb{Q}}_p$-valued vertex-weighted graphs, which suggests a possibility that sharpens the results in \cite{AMT}.
The theory of vertex-weighted graphs has been investigated with positive real numbers weights.
For instance, in \cite{MR1401006}, Chung and Langlands prove that a matrix-tree theorem works for $\mathbb{R}_{>0}$-valued weighted graphs, and Wu, Feng, and Sato study some analogue of arithmetical properties in \cite{WFS11}.
A difference between edge-weighted graphs and vertex-weighted graphs is that weighted complexities of vertex-weighted graph heavily depend on the choice of roots, whereas one for ``strongly'' symmetric edge-weighted graphs does not depend on the choice of roots.
The independency of the choice of roots allows us to study them easier, but drops some their information and causes awkward division into cases as in the following theorems (rooted cases in Theorem \ref{thm_10} and Theorem \ref{thm_12}).

In \cite{MR1401006}, Chung and Langlands establish the matrix-tree theorem for positive real-valued vertex-weighted simple graphs. 
To study Iwasawa theory for vertex-weighted graphs, we first generalize their matrix-tree theorem for field-valued vertex weighted possibly non-simple graphs.
Let $K$ be an arbitrary field, and let $X$ be a $K$-valued vertex-weighted connected finite graph.
$V(X)$ stands for the set of the vertices of $X$, and $\mathbb{E}(X)$ stands for the set of the directed edges of $X$. So each edge $e \in \mathbb{E}(X)$ has \emph{origin $o(e)$} and \emph{terminus $t(e)$}. Also, since the graphs we deal with are symmetric directed graphs, there is a unique edge $\overbar{e}$ that represents the opposite direction.
Fix an algebraic closure $\overbar{K}$ of $K$, fix an embedding $K \hookrightarrow \overbar{K}$, and fix square roots of the weights on the vertices of $X$.
For a spanning tree $T$ of $X$ and for a vertex $v$, $T_v$ denotes the rooted tree of $T$ towards $v$, that is, every edge has exactly one direction towards $v$.
The \emph{weight $w(T_v)$ of $T_v$} is $\prod_{e \in \mathbb{E}(T_v)}w_{t(e)}$.
Now we can describe the rooted weighted complexities $\kappa_v(X)$ and the weighted complexity $\kappa(X)$, in which we are mainly interested.
The \emph{rooted (vertex-) weighted complexity $\kappa_v(X)$ at $v$} is $\sum w(T_v)$, where the sum extends over all spanning trees of $X$.
The \emph{(non-rooted vertex-) weighted complexity $\kappa(X)$} is $\sum_{v \in V(X)}\kappa_v(X)$.

Our first result is the matrix-tree theorem for $K$-valued vertex-weighted graphs that are possibly non-simple.
The \emph{degree matrix $\mathbfit{D}_X$} is the diagonal matrix labeled by $V(X)$ whose $(u,u)$- entry is given by 
\begin{equation}
\mathbfit{D}_X(u,u)=\sum_{\substack{e \in \mathbb{E}(X)\colon\\ o(e)=u}}w_{t(e)}.
\end{equation}
The \emph{symmetrized weighted adjacency matrix $\mathcal{W}_X$} is the matrix labeled by $V(X)$ and whose $(u,v)$-entries are given by
\begin{equation}
\quad \mathcal{W}_X(u,v)=\sum_{\substack{e \in \mathbb{E}(X)\colon \\ o(e)=u,\\t(e)=v}}\sqrt{w_u}\sqrt{w_v}.
\end{equation}
The \emph{symmetrized Laplacian matrix $\mathcal{L}_X$ of $X$} is the difference $\mathbfit{D}_X-\mathcal{W}_X$.
We have two kinds of matrix-tree theorems.
One is a formula for rooted complexities.
\begin{theorem}[= Corollary \ref{coro_1}]
Let $X$ be a $K$-valued vertex-weighted connected finite graph.
\begin{equation}
\det(\mathcal{L}_X[\{v\},\{v\}])=\kappa_v(X),
\end{equation}
where $\mathcal{L}_X[\{v\},\{v\}]$ is the minor matrix deleting $v$-th row and $v$-th column.
\end{theorem}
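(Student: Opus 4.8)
The plan is to realise the symmetrized Laplacian as a Gram matrix and then invoke the Cauchy--Binet formula, exactly as in the classical proof of Kirchhoff's theorem, with the fixed square roots of the vertex weights absorbed into a weighted incidence matrix. Choose for every edge of $X$ one of its two directed representatives $e$, and let $\mathcal{B}_X$ be the matrix with rows indexed by $V(X)$ and one column per edge, whose column attached to $e$ carries the entry $\sqrt{w_{t(e)}}$ in row $o(e)$, the entry $-\sqrt{w_{o(e)}}$ in row $t(e)$, and $0$ elsewhere (so a self-loop contributes a zero column). Reversing the chosen orientation of an edge merely negates its column, hence leaves $\mathcal{B}_X\mathcal{B}_X^{\top}$ unchanged. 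A direct computation of $\mathcal{B}_X\mathcal{B}_X^{\top}$ recovers the diagonal entries $\sum_{c\neq u}w_c\,a_{uc}$ and the off-diagonal entries $-\sqrt{w_u}\sqrt{w_{u'}}\,a_{uu'}$ of $\mathcal{L}_X$, where $a_{uu'}$ counts the edges joining $u$ and $u'$ (self-loop contributions cancel out of the diagonal), so that
\[
\mathcal{L}_X=\mathcal{B}_X\mathcal{B}_X^{\top},\qquad \mathcal{L}_X[\{v\},\{v\}]=\mathcal{B}_X^{(v)}\bigl(\mathcal{B}_X^{(v)}\bigr)^{\top},
\]
where $\mathcal{B}_X^{(v)}$ denotes $\mathcal{B}_X$ with the row indexed by $v$ deleted.

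Next I would apply the Cauchy--Binet formula to the right-hand side. Writing $n=\#V(X)$, it gives
\[
\det\bigl(\mathcal{L}_X[\{v\},\{v\}]\bigr)=\sum_{S}\det\bigl(\mathcal{B}_X^{(v)}[S]\bigr)^{2},
\]
the sum ranging over the sets $S$ of $n-1$ edges, with $\mathcal{B}_X^{(v)}[S]$ the corresponding square submatrix. The key vanishing input is the single linear relation $\sum_{x\in V(X)}\sqrt{w_x}\cdot(\text{row }x\text{ of }\mathcal{B}_X)=0$, checked column by column. If $S$ is not a spanning tree then, having $n-1$ edges, the subgraph $(V(X),S)$ is disconnected, so some connected component $C$ omits $v$; restricting the relation to the rows indexed by $C$ (the only ones meeting the columns of $S$ lying in $C$) produces, since the $w_x$ are nonzero, a nontrivial dependence among the rows of $\mathcal{B}_X^{(v)}[S]$, whence $\det\bigl(\mathcal{B}_X^{(v)}[S]\bigr)=0$. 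Thus only spanning trees survive.

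It remains to evaluate the surviving terms. For a spanning tree $T$ I would root it at $v$ and strip leaves other than $v$ one at a time: a leaf $u$ meets $T$ in the single edge joining it to its parent, so the corresponding row of $\mathcal{B}_X^{(v)}[T]$ has exactly one nonzero entry, namely $\pm\sqrt{w_{\mathrm{parent}(u)}}$, and Laplace expansion along it peels off this factor and reduces to the smaller rooted tree. Iterating yields $\det(\mathcal{B}_X^{(v)}[T])=\pm\prod_{u\neq v}\sqrt{w_{\mathrm{parent}(u)}}$, so that $\det(\mathcal{B}_X^{(v)}[T])^{2}=\prod_{u\neq v}w_{\mathrm{parent}(u)}$. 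Since orienting every edge of $T$ towards $v$ sends the edge at $u$ to $o(e)=u$, $t(e)=\mathrm{parent}(u)$, this product is exactly $w(T_v)=\prod_{e\in\mathbb{E}(T_v)}w_{t(e)}$. Summing over all spanning trees gives $\sum_T w(T_v)=\kappa_v(X)$, as claimed.

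The main obstacle is this last evaluation step: one must verify that the leaf-stripping bookkeeping produces precisely the terminus-weights defining $w(T_v)$, and that parallel edges are counted with the correct multiplicity (they are, as Cauchy--Binet ranges over edge-subsets). Two features are worth noting. First, the square roots and all sign ambiguities disappear upon squaring, so the determinant lies in $K$ even though $\mathcal{B}_X$ has entries in $\bar{K}$; the same fact is visible by conjugating with $\mathscr{W}_X=\mathrm{diag}(\sqrt{w_x})$, under which $\mathcal{L}_X=\mathscr{W}_X\bm{L}_X\mathscr{W}_X^{-1}$ for the $K$-rational Laplacian $\bm{L}_X=\bm{D}_X-\bm{W}_X$, and deleting row and column $v$ preserves the conjugacy, giving $\det(\mathcal{L}_X[\{v\},\{v\}])=\det(\bm{L}_X[\{v\},\{v\}])\in K$. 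Second, self-loops (zero columns) and multiple edges need no separate treatment, so the argument applies to the possibly non-simple graphs considered here.
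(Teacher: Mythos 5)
Your proposal takes essentially the same route as the paper's own proof: the matrix $\mathcal{B}_X$ you construct is exactly the paper's weighted boundary operator matrix $\bm{B}_X$, the factorization $\mathcal{L}_X=\bm{B}_X(\bm{B}_X)^{\mathsf{T}}$ is the paper's \eqref{eq_2}, and the Cauchy--Binet expansion over $(n-1)$-subsets of edge classes, with non-tree terms vanishing and tree terms evaluating to $w(T_v)$, is precisely the structure of Proposition \ref{prop_2} and Corollary \ref{coro_1}. The only local differences lie inside the two lemmas: for the vanishing step the paper shows that the columns attached to the edges of a cycle are linearly dependent, while you exhibit a dependence among the rows indexed by a connected component missing $v$; for the tree evaluation the paper inducts by cutting the tree at an edge incident to the root, while you strip leaves. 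These variants are interchangeable and equally valid in spirit.

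There is, however, one genuine (though easily repaired) gap. Your vanishing argument invokes ``since the $w_x$ are nonzero,'' but the paper's setting is an arbitrary weight function $w\colon V(X)\to K$, and zero weights are allowed (the paper even treats zero-weight situations, e.g.\ in Corollary \ref{coro_2} and in the proof of Theorem \ref{thm_5}). If every vertex of the component $C$ has weight $0$, then the relation $\sum_{x\in C}\sqrt{w_x}\,(\text{row }x)=0$ is the trivial relation and certifies nothing. The repair takes one line: in that case every entry of every row indexed by $C$ has the form $\pm\sqrt{w_y}$ with $y\in C$, hence equals $0$, so those rows vanish identically and the determinant is again $0$. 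By contrast, the paper's cycle-column argument needs no case distinction, since the square submatrix carried by a $k$-cycle has exactly two nonzero permutation terms, which cancel for arbitrary weights. The same nowhere-zero hypothesis silently enters your closing remark that $\mathcal{L}_X$ and $\bm{L}_X$ are conjugate by the diagonal matrix of square roots --- the paper asserts this equivalence only when all weights are nonzero --- but that remark is decorative and your main argument does not depend on it.
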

The other is a formula for complexity.
\begin{theorem}[= Theorem \ref{thm_5}]
Let $X$ be a $K$-valued vertex-weighted connected finite graph.
Then 
\begin{equation}
\raisebox{-.75ex}{\bigg(}\sum_{z\in V(X)}w_z\raisebox{-.75ex}{\bigg)}\adj(\mathcal{L}_X)(u,v)=\sqrt{w_u}\sqrt{w_v}\kappa(X),
\end{equation}
where $\adj(\mathcal{L}_X)$ is the classical adjoint matrix of $\mathcal{L}_X$.
\end{theorem}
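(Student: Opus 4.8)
The plan is to exploit the fact that $\mathcal{L}_X$ is symmetric and singular with an explicit null vector, so that its classical adjoint is forced to be a scalar multiple of a rank-one matrix; the scalar is then pinned down by a trace computation together with the rooted matrix-tree theorem (Corollary \ref{coro_1}).

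First I would record the vector $\bm{r} := (\sqrt{w_v})_{v \in V(X)}$ and check by a one-line computation that $\mathcal{L}_X\bm{r} = 0$. Writing $a(u,v)$ for the number of directed edges $e$ with $o(e)=u$ and $t(e)=v$, the $u$-th coordinate of $\mathcal{L}_X\bm{r}$ is $\sqrt{w_u}\bigl(\bm{D}_X(u,u) - \sum_v w_v\, a(u,v)\bigr)$, and this vanishes because $\bm{D}_X(u,u) = \sum_{e\colon o(e)=u} w_{t(e)} = \sum_v w_v\, a(u,v)$ is precisely the defining identity of the degree matrix. Hence, as long as the weights are not all zero so that $\bm{r}\neq 0$, the vector $\bm{r}$ is a nontrivial element of $\ker(\mathcal{L}_X)$ and $\det(\mathcal{L}_X)=0$.

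Next I would invoke pure linear algebra. Since $\mathcal{L}_X$ is symmetric, so is $\adj(\mathcal{L}_X)$, and $\mathcal{L}_X\adj(\mathcal{L}_X) = \det(\mathcal{L}_X)I = 0$ shows that every column of $\adj(\mathcal{L}_X)$ lies in $\ker(\mathcal{L}_X)$. If $\adj(\mathcal{L}_X)=0$ there is nothing to prove; otherwise $\adj(\mathcal{L}_X)$ has rank exactly one, which forces $\rank(\mathcal{L}_X)=|V(X)|-1$, so $\ker(\mathcal{L}_X)=\bar{K}\bm{r}$, and combining this with the symmetry of $\adj(\mathcal{L}_X)$ yields $\adj(\mathcal{L}_X) = c\,\bm{r}\bm{r}^{\top}$ for a single scalar $c\in\bar{K}$. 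In coordinates this reads $\adj(\mathcal{L}_X)(u,v) = c\sqrt{w_u}\sqrt{w_v}$. Finally I would determine $c$ by comparing traces: on one hand $\operatorname{tr}(c\,\bm{r}\bm{r}^{\top}) = c\sum_{z}w_z$, while on the other hand the diagonal entry $\adj(\mathcal{L}_X)(v,v)$ is the principal minor $\det(\mathcal{L}_X[\{v\},\{v\}])$, which equals $\kappa_v(X)$ by Corollary \ref{coro_1}, so summing over $v$ gives $\operatorname{tr}(\adj(\mathcal{L}_X)) = \sum_v\kappa_v(X) = \kappa(X)$. Therefore $c\sum_z w_z = \kappa(X)$, and multiplying the coordinate identity through by $\sum_z w_z$ produces exactly $\left(\sum_z w_z\right)\adj(\mathcal{L}_X)(u,v) = \sqrt{w_u}\sqrt{w_v}\kappa(X)$; the degenerate all-zero-weight case is handled trivially since then both sides vanish.

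I expect the main obstacle to be the linear-algebra step that controls the kernel: one must argue that whenever $\adj(\mathcal{L}_X)\neq 0$ the rank is automatically $|V(X)|-1$, so that the explicit null vector $\bm{r}$ actually spans $\ker(\mathcal{L}_X)$, while in the remaining case $\adj(\mathcal{L}_X)=0$ collapses the whole identity to $0=0$. Over an arbitrary field $K$ this is the only delicate point, since one cannot appeal to positivity or non-vanishing of $\kappa_v(X)$ to rule out rank degeneration, as was available in the positive-real setting of Chung--Langlands.
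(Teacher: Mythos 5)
Your proposal is correct and follows essentially the same route as the paper's proof of Theorem \ref{thm_5}: the null vector $\sqrt{\mathscr{W}_X}\bm{1}$ plus symmetry of $\adj(\mathcal{L}_X)$ forces $\adj(\mathcal{L}_X)=\varkappa\cdot\sqrt{\mathscr{W}_X}\bm{J}\sqrt{\mathscr{W}_X}$ (your $c\,\bm{r}\bm{r}^{\top}$), and your trace computation is literally the paper's step of summing the diagonal identities $\kappa_v(X)=\varkappa\, w_v$ coming from Corollary \ref{coro_1}. The only cosmetic difference is that you split cases by $\adj(\mathcal{L}_X)=0$ versus $\adj(\mathcal{L}_X)\neq 0$ while the paper splits by $\rank(\mathcal{L}_X)\leq n-2$ versus $n-1$ (equivalent here since $\det(\mathcal{L}_X)=0$); in the vanishing case you should state explicitly, as the paper does, that Corollary \ref{coro_1} then gives $\kappa_v(X)=0$ for all $v$ and hence $\kappa(X)=0$, so that the right-hand side vanishes as well.
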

Let $Y/X$ be a finite Galois cover of edge-weighted graphs.
Then $h$-functions of $Y/X$ mean the three-term-determinant part of Artin-Ihara $L$-functions.
An important property is that the edge-weighted complexity of $Y$ factors into the edge-weighted complexity of $X$ and values of $h$-functions of $Y/X$.
This factorization is originally given in \cite[Corollary 1]{MS03} with the assumption $\left(\sum_{e\in\mathbb{E}(X)}w_e\right)/2-\# V(X)\neq 0$. 
If $X$ is non-weighted, that is, if all the weights $w_e$ are 1, then this assumption means that the Euler characteristic $\chi(X)$ of $X$ is non-zero.
See \cite{AMT} for more details of the background of $h$-functions.
In \cite[Cor.~1]{WFS11}, they prove that, for a finite Galois cover $Y/X$ of vertex-weighted graphs satisfying $\left(\sum_{e\in \mathbb{E}(X)}w_{o(e)}w_{t(e)}\right)/2-\sum_{v \in V(X)}w_v\neq 0$, $\kappa(Y)$ also factors into $\kappa(X)$ and the value of ``weighted'' $h$-functions of $Y/X$.
We prove that the same factorization is valid for \emph{rooted} weighted complexities without the assumption $\sum_{e \in \mathbb{E}(X)}w_e\neq \#V(X)$.
\begin{theorem}[= \eqref{eq_14} in Theorem \ref{thm_6}]\label{thm_7}
Let $Y/X$ be a finite Galois cover of $K$-valued vertex-weighted graphs whose Galois group is $G$, and let $v$ be a vertex of $X$.
Let $\widehat{G}$ be a full set of representatives of equivalence classes for the irreducible representations of $G$. 
Then, for every vertex $w$ of $Y$ belonging to the fiber of $v$,
\begin{equation}\label{eq_36}
\kappa_w(Y)=\frac{\kappa_v(X)}{\# G}\prod_{\rho \in \widehat{G} \setminus \{\triv_G\}}(h_{Y/X}(\rho, 1))^{d_{\rho}},
\end{equation}
where $\triv_G$ is the trivial representation of $G$ and where $d_{\rho}$ is the degree of $\rho$.
In particular, $\kappa_w(Y)$ is constant on the fiber of $v$, so we would rather write $\kappa_v(Y)$ than $\kappa_w(Y)$ for $w$ belonging to the fiber of $v$.
\end{theorem}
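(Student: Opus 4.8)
The plan is to deduce everything from the rooted matrix-tree theorem (Corollary~\ref{coro_1}), which identifies each rooted complexity with a principal minor of the symmetrized Laplacian, $\kappa_z(X)=\det(\mathcal{L}_X[\{z\},\{z\}])$. First I would record the structure of $\mathcal{L}_X$: it is symmetric, and the diagonal conjugation by $\mathscr{W}_X^{1/2}$ carries the ordinary Laplacian $\bm{L}_X$ (whose row sums vanish) to $\mathcal{L}_X$, so the kernel of $\mathcal{L}_X$ is spanned by the square-root-weight vector $\xi_X=(\sqrt{w_z})_{z\in V(X)}$. For connected $X$ this kernel is one-dimensional, hence $\rank\mathcal{L}_X=\#V(X)-1$ and $\adj(\mathcal{L}_X)$ is a nonzero rank-one symmetric matrix whose columns lie in the kernel; therefore $\adj(\mathcal{L}_X)=c_X\,\xi_X\xi_X^{\!\top}$ for a single scalar $c_X$. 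Reading off the diagonal gives $\kappa_z(X)=\adj(\mathcal{L}_X)(z,z)=c_X\,w_z$ for every $z$, so in particular $c_X=\kappa_v(X)/w_v$. The same discussion applied to $Y$ yields $\adj(\mathcal{L}_Y)=c_Y\,\xi_Y\xi_Y^{\!\top}$ and $\kappa_w(Y)=c_Y\,w_v$, where I use that a cover preserves vertex weights, so $\xi_{Y,w}^2=w_v$ for every $w$ in the fibre of $v$; this already gives the constancy of $\kappa_w(Y)$ along the fibre.

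The heart of the argument is then to compute the ratio $c_Y/c_X$. Since $G$ acts on $Y$ by weight-preserving automorphisms, $\mathcal{L}_Y$ commutes with the permutation action of $G$ on $\bar{K}^{V(Y)}$, and the identification $\bar{K}^{V(Y)}\cong\bar{K}^{V(X)}\otimes\bar{K}[G]$ decomposes this space into isotypic components. Using $\bar{K}[G]\cong\bigoplus_{\rho\in\widehat{G}}V_\rho^{\oplus d_\rho}$, the matrix $\mathcal{L}_Y$ becomes block diagonal, $\mathcal{L}_Y\cong\bigoplus_{\rho\in\widehat{G}}\mathcal{L}_X(\rho)^{\oplus d_\rho}$, where $\mathcal{L}_X(\rho)$ is the $\rho$-twisted symmetrized Laplacian assembled from $\bm{D}_X$ and the twisted adjacency blocks $\mathcal{W}_X^{\alpha,\sigma}$, and where the trivial summand is $\mathcal{L}_X(\triv_G)=\mathcal{L}_X$. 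By the construction of the $h$-function (the matrix $\bm{H}_X^{\alpha}(\rho,t)$ specializes at $t=1$ to this twisted Laplacian) one has $\det\mathcal{L}_X(\rho)=h_{Y/X}(\rho,1)$. Comparing kernels, $\dim\ker\mathcal{L}_Y=\sum_\rho d_\rho\dim\ker\mathcal{L}_X(\rho)$, and connectedness of $Y$ forces the left-hand side to equal $1$; since the trivial block already contributes a one-dimensional kernel, every non-trivial block is nonsingular, i.e.\ $h_{Y/X}(\rho,1)\neq0$ for $\rho\neq\triv_G$. Working over the algebraically closed field $\bar{K}$, writing $\det\nolimits'$ for the product of the nonzero eigenvalues, multiplicativity over blocks therefore gives
\begin{equation}
\det\nolimits'(\mathcal{L}_Y)=\det\nolimits'(\mathcal{L}_X)\prod_{\rho\in\widehat{G}\setminus\{\triv_G\}}h_{Y/X}(\rho,1)^{d_\rho}.
\end{equation}

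Finally I would convert these pseudo-determinants back into the scalars $c_X,c_Y$ by taking traces of adjugates: when the kernel is simple one has $\operatorname{tr}\adj(\mathcal{L}_X)=\det\nolimits'(\mathcal{L}_X)$, while $\operatorname{tr}\adj(\mathcal{L}_X)=c_X\,\xi_X^{\!\top}\xi_X=c_X\sum_{z\in V(X)}w_z$, and likewise for $Y$, together with $\sum_{z\in V(Y)}w_z=\#G\sum_{z\in V(X)}w_z$ (each vertex has $\#G$ preimages of equal weight). Substituting into the displayed factorization yields $c_Y\,\#G=c_X\prod_{\rho\neq\triv_G}h_{Y/X}(\rho,1)^{d_\rho}$, whence $\kappa_w(Y)=c_Y\,w_v=\tfrac{\kappa_v(X)}{\#G}\prod_{\rho\neq\triv_G}h_{Y/X}(\rho,1)^{d_\rho}$, which is \eqref{eq_36}. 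This passage through the traces uses $\sum_z w_z\neq0$; to obtain the theorem with no hypothesis on the weights, I would regard both sides of \eqref{eq_36} as polynomials in the fixed square roots $\sqrt{w_z}$ (every matrix entry involved is polynomial in these), observe that they agree on the Zariski-dense open locus $\{\sum_z w_z\neq0\}$ of the affine space of weights over the infinite field $\bar{K}$, and conclude that they agree identically. The main obstacle is the representation-theoretic block diagonalization together with the precise identification $\det\mathcal{L}_X(\rho)=h_{Y/X}(\rho,1)$ and the non-vanishing of these values for $\rho\neq\triv_G$; once the rank-one adjugate bridge is in place, the remaining bookkeeping and the density argument are routine.
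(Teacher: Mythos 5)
Your skeleton is in fact the same as the paper's proof of Theorem \ref{thm_6} — the rank-one structure of the adjugate of the symmetrized Laplacian combined with the representation-theoretic block diagonalization of $\mathcal{L}_Y$ into $\mathcal{L}_X$ and the twisted blocks $\bm{H}_X^{\alpha}(\rho,1)$ — but two intermediate claims on which your execution rests are false in the stated generality, and this is a genuine gap rather than a technicality, because removing exactly these non-degeneracy hypotheses is the point of the theorem. You claim that connectedness of $Y$ forces $\dim\ker\mathcal{L}_Y=1$, and you deduce from the kernel count that $h_{Y/X}(\rho,1)\neq 0$ for every $\rho\neq\triv_G$. Neither holds for arbitrary $K$-valued weights. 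For the first, take $X$ to be the star with center of weight $0$ and three leaves of weight $1$: then $\mathcal{W}_X=\bm{O}$ and $\mathcal{L}_X=\bm{D}_X=\operatorname{diag}(3,0,0,0)$, so the kernel is three-dimensional although $X$ is connected and $\sum_z w_z=3\neq 0$; here $\adj(\mathcal{L}_X)=\bm{O}$, all rooted complexities vanish, and your pseudo-determinant and trace identities have no content. For the second, vanishing values $h_{Y/X}(\rho,1)=0$ do occur for connected covers — this is precisely the degenerate situation that Theorem \ref{thm_2}$(1)$ of the paper is written to handle — and \eqref{eq_36} must hold there too (both sides are then $0$), so any proof that begins by ruling this out cannot be right. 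A smaller caution: even where the kernel is geometrically simple, the bridge ``$\operatorname{tr}\adj(\mathcal{L})=$ product of nonzero eigenvalues'' needs the eigenvalue $0$ to be \emph{algebraically} simple, which over $\bar{K}$ is not automatic (it does follow when $\rank\mathcal{L}=\#V(X)-1$ and $\sum_z w_z\neq 0$, but that rank statement is exactly what you have not established).

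Your closing density argument does not repair this, because it targets only the locus $\bigl\{\sum_z w_z\neq 0\bigr\}$, whereas your main argument is valid only on the smaller locus where in addition $\rank\mathcal{L}_X=\#V(X)-1$ and all $h_{Y/X}(\rho,1)\neq 0$; the star example lies in the difference. Two easy repairs exist. (i) Handle the degenerate cases directly: if $\dim\ker\mathcal{L}_X\geq 2$ or some nontrivial block is singular, then the block decomposition forces $\dim\ker\mathcal{L}_Y\geq 2$, hence $\adj(\mathcal{L}_Y)=\bm{O}$, so the left side of \eqref{eq_36} vanishes by Corollary \ref{coro_1}, while the right side vanishes because $\kappa_v(X)\prod_{\rho}h_{Y/X}(\rho,1)^{d_{\rho}}=0$. (ii) Alternatively, run the density argument on the locus where $\left(\sum_z w_z\right)\kappa_v(X)\prod_{\rho\neq\triv_G}h_{Y/X}(\rho,1)\neq 0$, which is Zariski-open and nonempty (at the all-weights-one point the classical characteristic-zero fact gives $\rank\mathcal{L}_Y=\#V(Y)-1$, so all factors are nonzero there), hence dense in the irreducible affine space of square-root variables. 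For comparison, the paper needs none of this: it evaluates the $(v,\sigma)$-principal minor of $(\bm{I}_{V(X)}\otimes\bm{P})^{-1}\mathcal{L}_{X(\alpha)}(\bm{I}_{V(X)}\otimes\bm{P})$ twice by Cauchy--Binet, obtaining $\kappa_v(X)\prod_{\rho}h_X^{\alpha}(\rho,1)^{d_{\rho}}$ from the block form and $\#G\cdot\kappa_{(v,\sigma)}(X(\alpha))$ from the unconditional identity $\adj(\mathcal{L}_{X(\alpha)})=\varkappa\sqrt{\mathscr{W}_{X(\alpha)}}\,\bm{J}\sqrt{\mathscr{W}_{X(\alpha)}}$, so no non-vanishing hypothesis ever enters.
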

In \cite{AMT}, the same type of Theorem \ref{thm_7} is shown assuming that $G$ is abelian.
The same proof in our theorem is also valid for edge-weighted graph theory, and Theorem \ref{thm_7} gives an improvement of their result.
We remark also that our proof is a simple calculation of determinants of matrices, so we do not need the assumption that $\big(\sum_{e\in\mathbb{E}(X)}w_{o(e)}w_{t(e)}\big)/2-\sum_{v\in V(X)}w_v\neq 0$.
Hence our theorem sharpens \cite[Corollary 1]{MS03}.

Summing up all \eqref{eq_36}, we can deduce the factorization for weighted complexities in \cite[Cor.~1]{WFS11} without the assumption $\sum_{v \in V(X)}w_v\neq \#V(X)$, which is tacitly assumed in \cite{WFS11}.
\begin{theorem}[= Theorem \ref{thm_6} \eqref{eq_21}]\label{thm_8}
Let $Y/X$ be a finite Galois cover of $K$-valued vertex-weighted graphs whose Galois group is $G$.
Let $\widehat{G}$ be a full set of representatives of equivalence classes for the irreducible representations of $G$. 
Then
\begin{equation}\label{eq_37}
\kappa(Y)=\kappa(X)\prod_{\rho \in \widehat{G} \setminus \{\triv_G\}}\big(h_{Y/X}(\rho, 1)\big)^{d_{\rho}}.
\end{equation}
\end{theorem}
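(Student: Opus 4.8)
The plan is to derive the non-rooted factorization \eqref{eq_37} directly from the rooted factorization \eqref{eq_36} of Theorem \ref{thm_7} by summing over all vertices of $Y$. By the very definition of the non-rooted weighted complexity we have $\kappa(Y) = \sum_{w \in V(Y)} \kappa_w(Y)$, and the vertex set $V(Y)$ is partitioned into the fibers of the covering map $Y \to X$, one fiber lying over each vertex $v \in V(X)$.

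First I would use the structure of a Galois cover: since the Galois group $G$ acts freely on $V(Y)$ with quotient $V(X)$, the fiber over each vertex $v$ consists of exactly $\# G$ vertices. Grouping the sum according to these fibers gives
\[
\kappa(Y) = \sum_{v \in V(X)} \; \sum_{w \text{ in the fiber of } v} \kappa_w(Y).
\]

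Next I would invoke Theorem \ref{thm_7}, which asserts that $\kappa_w(Y)$ is constant along the fiber of $v$, with common value $\dfrac{\kappa_v(X)}{\# G} \prod_{\rho \in \widehat{G} \setminus \{\triv_G\}} (h_{Y/X}(\rho,1))^{d_\rho}$. Summing this constant over the $\# G$ vertices in the fiber cancels the denominator, so the inner sum equals $\kappa_v(X) \prod_{\rho \in \widehat{G} \setminus \{\triv_G\}} (h_{Y/X}(\rho,1))^{d_\rho}$. Since this product over $\rho$ is independent of $v$, it factors out of the outer sum, and the remaining factor $\sum_{v \in V(X)} \kappa_v(X)$ is exactly $\kappa(X)$ by definition. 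This yields \eqref{eq_37}.

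Because Theorem \ref{thm_7} has already been established, the present statement reduces to this single bookkeeping computation and I anticipate no genuine obstacle. The one point deserving care is the claim that each vertex fiber has precisely $\# G$ elements, which rests on the freeness of the $G$-action underlying a Galois cover of graphs; granting this, the cancellation of $\# G$ against the factor $1/\# G$ and the factoring-out of the $\rho$-product are immediate. Note also that, since Theorem \ref{thm_7} requires no nondegeneracy hypothesis, neither does the resulting formula \eqref{eq_37}, which is precisely the promised removal of the assumption $\sum_{v \in V(X)} w_v \neq \# V(X)$.
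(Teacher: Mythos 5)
Your proposal is correct and is essentially the paper's own argument: the paper obtains \eqref{eq_21} from the rooted formula \eqref{eq_14} precisely by ``summing up \eqref{eq_14} over all vertices of $X(\alpha)$,'' which is your fiber-by-fiber bookkeeping, with the $\#G$ vertices in each fiber (immediate from $V(X(\alpha))=V(X)\times G$, or, in the paper's definition of Galois cover, from $\#\Aut(Y/X)=\#\pi_V^{-1}(v)$) cancelling the denominator $\#G$. The only cosmetic difference is that you phrase the count of fiber elements via freeness of the $G$-action on a general Galois cover, whereas the paper works with the derived graph of a voltage assignment where the fibers are explicit.
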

Note that \eqref{eq_36} involves $\#G$ as the denominator of right-hand side, but \eqref{eq_37} does not.
This fact affects the Iwasawa $\lambda$-invariant for \eqref{eq_35}.

Now we embark on the Iwasawa theory for vertex-weighted graphs.
Let $K$ be a finite extension of $\mathbb{Q}_p$, let $\mathcal{O}$ be the valuation ring of $K$, and let $\Lambda_{d, \mathcal{O}}$ be the formal power series ring $\mathcal{O}\lBrack T_1,\dotsc, T_d\rBrack$.
Then, for an element $F$ of $\Lambda_{d,\mathcal{O}}\otimes_{\mathcal{O}}K$, we can extract a rational number $\mu(F)$ and a integer $\lambda(F)$ that are called Iwasawa $\mu$-invariant and $\lambda$-invariant of $F$ respectively.
A fundamental result of multivariable Iwasawa theory is the following.
\begin{theorem}[{\cite[Theorem 3.5]{AMT}}]\label{thm_9}
Put $W=\{\,\zeta \in \overbar{\mathbb{Q}}_p\;|\; \text{$\zeta^{p^n}=1$ for some non-negative integer $n$}\,\}$, and write 
$W^d(n)=\{\,\mathbfit{\zeta}\in W^d\;|\; \mathbfit{\zeta}^{p^n}=\mathbfit{1}\,\}$.
Let $F$ be an non-zero element of $\Lambda_{d, \mathcal{O}}\otimes_{\mathcal{O}}K$.
Assume that the function $\zeta\mapsto F(\mathbfit{\zeta}-\mathbfit{1})$ does not vanish on $W^d$.
Then there exist $\mu_1,\dotsc,\ \mu_{d-1},\ \lambda_1,\dotsc,\ \lambda_{d-1},\nu \in \mathbb{Q}$ such that 
\begin{equation}\label{eq_38}
\sum_{\mathbfit{\zeta}\in W^d(n) \setminus \{\mathbfit{1}\}}\val_p(F(\mathbfit{\zeta}-\mathbfit{1}))=(\mu(F) p^n+\lambda(F) n)p^{(d-1)n}+\left(\sum_{i=1}^{d-1}(\mu_i p^n+\lambda n)p^{(d-1-i)n}\right)+\nu
\end{equation}
for all $n \gg 0$.
\end{theorem}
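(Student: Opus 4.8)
The plan is to induct on $d$, taking the classical one-variable Iwasawa computation as the base case and using multivariable Weierstrass preparation to peel off one variable at each step. Throughout I would exploit that the automorphisms of $\Gamma \cong \mathbb{Z}_p^d$, acting on $\Lambda_{d,\mathcal{O}}$ by substitutions $T_i \mapsto \prod_j (1+T_j)^{a_{ij}} - 1$ with $(a_{ij}) \in \GL_d(\mathbb{Z}_p)$, permute each finite set $W^d(n)$ and hence leave the quantity $S_n := \sum_{\bm{\zeta}\in W^d(n)\setminus\{\bm{1}\}}\val_p(F(\bm{\zeta}-\bm{1}))$ unchanged; this lets me change coordinates freely without altering either side of the formula.

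The first step is normalization: write $F = c\cdot F_0$ with $\val_p(c) = \mu(F)$ and $F_0 \in \Lambda_{d,\mathcal{O}}$ primitive, i.e. with nonzero reduction $\bar{F}_0 \in \Omega_{d,\mathcal{O}}$. Since $\val_p(F(\bm{\zeta}-\bm{1})) = \mu(F) + \val_p(F_0(\bm{\zeta}-\bm{1}))$ and $\#(W^d(n)\setminus\{\bm{1}\}) = p^{dn}-1$, the constant $\mu(F)$ contributes exactly $\mu(F)\,p^n\cdot p^{(d-1)n}$, with the leftover $-\mu(F)$ absorbed into $\nu$. It then remains to analyze $\sum_{\bm{\zeta}}\val_p(F_0(\bm{\zeta}-\bm{1}))$ and to locate inside it the $\lambda(F)\,n\,p^{(d-1)n}$ term together with all lower-order terms.

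For the Weierstrass step, after a generic coordinate change of the kind above I may assume $\bar{F}_0$ is regular in $T_d$ of Weierstrass degree $\lambda = \lambda(F)$; then over $R := \Lambda_{d-1,\mathcal{O}}$ the Weierstrass preparation theorem gives $F_0 = P\cdot U$ with $U \in \Lambda_{d,\mathcal{O}}^{\times}$ and $P \in R[T_d]$ distinguished of degree $\lambda$. Because $U$ is a unit, $\val_p(U(\bm{\zeta}-\bm{1})) = 0$ at every torsion point, so the problem reduces to the distinguished polynomial. Writing $P_{\bm{\zeta}'}(T_d) := P(\bm{\zeta}'-\bm{1},T_d)$ for $\bm{\zeta}' = (\zeta_1,\dotsc,\zeta_{d-1})$ and factoring over $\bar{K}$, all roots $\alpha_i(\bm{\zeta}')$ have positive valuation since $P$ is distinguished; the elementary identity $\prod_{\eta^{p^n}=1}(\eta - 1 - \alpha) = \pm\bigl((1+\alpha)^{p^n}-1\bigr)$ then reduces the inner sum over $\zeta_d$ to evaluating $\sum_{i=1}^{\lambda}\val_p\bigl((1+\alpha_i(\bm{\zeta}'))^{p^n}-1\bigr)$.

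The heart of the argument, and the main obstacle, is to sum this over $\bm{\zeta}' \in W^{d-1}(n)$ uniformly in $n$ (this sharpens the leading-order estimate of Cuoco and Monsky to an exact expansion). For a root of valuation $v>0$ one has $\val_p((1+\alpha)^{p^n}-1) = n + C(v)$ once $p^{n-1}(p-1)\geq 1/v$, while in the opposite range it equals $v\,p^n = O(1)$; thus each root contributes $n$ to leading order, yielding the expected $\lambda(F)\,n\,p^{(d-1)n}$ after summing over the $\approx p^{(d-1)n}$ choices of $\bm{\zeta}'$. The difficulty is that the relevant threshold depends on the varying root valuations $v_i(\bm{\zeta}')$, which for large $n$ may become arbitrarily small, so the passage from leading term to full expansion is not uniform. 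I would control this by observing that the symmetric functions of the $\alpha_i(\bm{\zeta}')$ are the values at $\bm{\zeta}'-\bm{1}$ of the coefficients of $P$, elements of $R = \Lambda_{d-1,\mathcal{O}}$; in particular $\sum_i v_i(\bm{\zeta}') = \val_p(P(\bm{\zeta}'-\bm{1},0))$. Feeding these coefficient power series, together with the discriminant and resultant elements that detect when root valuations cross a threshold, into the inductive hypothesis in $d-1$ variables produces the remaining terms $\sum_{i=1}^{d-1}(\mu_i p^n + \lambda_i n)p^{(d-1-i)n} + \nu$. The non-vanishing hypothesis on $W^d$ is precisely what guarantees that all these specializations are nonzero, so that every $\val_p$ is finite and the inductive bookkeeping of the error terms closes up into the stated polynomial in $p^n$ and $n$.
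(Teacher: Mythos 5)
Your proposal breaks down at the point where you identify $\lambda(F)$ with a Weierstrass degree. In this paper (following \cite{AMT}), $\lambda(F)$ is \emph{not} the multiplicity of $\bar{F}_0$ at the origin: it is $\sum \ord_{\mathfrak{p}}(\bar{F}_0)$ with the sum taken only over the special primes $\mathfrak{p}=\left\langle (\sigma-1)\bar{\hphantom{)}}\right\rangle$, $\sigma \in \Gamma\setminus\Gamma^{p}$. A substitution coming from $\Aut(\Gamma)\cong\GL_d(\mathbb{Z}_p)$ makes $\bar{F}_0$ regular in $T_d$ of degree at least the multiplicity of $\bar{F}_0$ at the origin, and this exceeds $\lambda(F)$ whenever $\bar{F}_0$ has an irreducible factor through the origin not associate to any $(\sigma-1)\bar{\hphantom{)}}$ (each $(\sigma-1)\bar{\hphantom{)}}$ has a nonzero linear term, hence multiplicity exactly $1$). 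Concretely, take $d=2$, $p\geq 5$, $F=T_1^2+T_2^3+p$: one checks $F$ is nonvanishing on $W^2$ (the three terms $(\zeta_1-1)^2$, $(\zeta_2-1)^3$, $p$ can never have their minimal valuation attained twice), $\mu(F)=0$, and $\lambda(F)=0$ because $\bar{F}_0=T_1^2+T_2^3$ is irreducible of multiplicity $2$. Your scheme, with Weierstrass degree $2$ and ``each root contributes $n$,'' would output a leading term $2np^{n}$, whereas the theorem asserts the sum equals $\mu_1p^n+\lambda_1 n+\nu$ for $n\gg 0$, with no $np^{n}$ term at all; a direct estimate confirms this, since a generic $\bm{\zeta}$ of exact order $p^n$ gives $\val_p(F(\bm{\zeta}-\bm{1}))=2\val_p(\zeta_1-1)\to 0$.

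The same example shows why ``each root contributes $n$ to leading order'' is false: for the preparation in $T_1$ the roots satisfy $\val_p\bigl(\alpha_{\pm}(\zeta_2)\bigr)=\tfrac{3}{2}\val_p(\zeta_2-1)\to 0$ as the order of $\zeta_2$ grows, and for roots of such small valuation $\val_p\bigl((1+\alpha_{\pm})^{p^n}-1\bigr)$ stays bounded instead of growing like $n$. A family of roots contributes $n$ per point only when its valuations are bounded below uniformly in $\bm{\zeta}'$ and $n$, which happens exactly for factors of the form $(\sigma-1)$, whose zero loci contain entire subgroups of torsion points --- this is precisely what $\lambda(F)$ counts. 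Your proposed repair does not close this gap: note that $\prod_i\bigl((1+\alpha_i(\bm{\zeta}'))^{p^n}-1\bigr)=\pm\prod_{\zeta_d^{p^n}=1}P_{\bm{\zeta}'}(\zeta_d-1)$, so the inner sum you reduced to \emph{is} the sum you started from (the Weierstrass step is a reformulation, not a reduction), and it depends on $n$ through the exponent $p^n$, so it is not the value at $\bm{\zeta}'-\bm{1}$ of any fixed element of $\Lambda_{d-1,\mathcal{O}}$; the inductive hypothesis cannot be invoked for it, and asymptotics for the auxiliary series you name (coefficients, discriminants, resultants) do not determine it. Finally, the theorem claims an exact equality for all $n\gg 0$, not an asymptotic expansion, which is the genuinely delicate part. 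For the record, this paper does not reprove the statement --- its proof is the citation of \cite[Theorem 3.5]{AMT} --- and the argument there, in the tradition of Cuoco--Monsky \cite{MR614400} and Monsky \cite{monsky}, rests on splitting $\bar{F}_0$ into its $(\sigma-1)$-part (which yields the $np^{(d-1)n}$ terms via sums over subgroups) and a part coprime to all $(\sigma-1)\bar{\hphantom{)}}$ (shown to contribute no such term by controlling how many torsion points give values of large valuation), not on Weierstrass preparation in a generic coordinate.
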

On the other hand, for a $\mathbb{Z}_p^d$-tower over $X$, there is an element $Q_{X^{\infty}/X}$ of $\Lambda_{d, \mathcal{O}}\otimes_{\mathcal{O}}K$ such that $Q_{X^{\infty}/X}(\mathbfit{\zeta}-\mathbfit{1})=h_{X_n/X}(\mathbfit{\zeta},1)$ for all $\mathbfit{\zeta} \in W^d(n) = ((\mathbb{Z}/p^n\mathbb{Z})^d)\hat{\hphantom{)}}$, and $Q_{X^{\infty}/X}$ satisfies the assumption of Theorem \ref{thm_9} if $\kappa_v(X_n) \neq 0$ for all $n$ and for fixed $v \in V(X)$ (or equivalently $\kappa(X_n) \neq 0$ for all $n$).
Therefore, combining \eqref{eq_36} and \eqref{eq_38}, we have the following result.
\begin{theorem}[= Theorem \ref{thm_2}]\label{thm_10}
Let $X_{\infty}/X$ be a $\mathbb{Z}_p^d$-tower over a $K$-valued vertex-weighted graph $X$ such that $X_n$ is connected for all $n$, and $\kappa_v(X_n) \neq 0$ for all $n$ and for fixed $v \in V(X)$. Then there exist $\mu_1,\dotsc,\ \mu_{d-1},\ \lambda_1,\dotsc,\ \lambda_{d-1},\nu \in \mathbb{Q}$ such that 
\begin{equation}\label{eq_39}
\val_p(\kappa_v(X_n))=(\mu_v(X_{\infty}/X) p^n+\lambda_v(X_{\infty}/X) n)p^{(d-1)n}+\left(\sum_{i=1}^{d-1}(\mu_i p^n+\lambda n)p^{(d-1-i)n}\right)+\nu
\end{equation}
for all $n \gg 0$, where $\mu_v(X_{\infty}/X)=\mu(Q_{X_{\infty}/X})$ and where 
\begin{equation}\label{eq_40}
\lambda_v(X_{\infty}/X)=
\begin{cases}
\lambda(Q_{X_{\infty}/X})-1 & \text{if $d=1$,} \\
\hfill \lambda(Q_{X_{\infty}/X}) \hfill\hfill & \text{if $d\geq 2$.}
\end{cases}
\end{equation}
\end{theorem}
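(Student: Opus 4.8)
The plan is to specialize the rooted decomposition formula \eqref{eq_36} to each layer $X_n/X$ of the tower and then feed the resulting product of $h$-values into the analytic input of Theorem \ref{thm_9}. Write $G_n = \Gal(X_n/X) \cong (\mathbb{Z}/p^n\mathbb{Z})^d$. Since $G_n$ is abelian, every irreducible representation is a character $\psi \in \widehat{G}_n$ of degree one, and these are parametrized by $\bm{\zeta}_{\psi} \in W^d(n)$ with $\triv_{G_n}$ corresponding to $\bm{\zeta} = \bm{1}$. Applying Theorem \ref{thm_7} to the cover $X_n/X$ and a vertex above $v$, and using $\# G_n = p^{nd}$ together with the identification $h_{X_n/X}(\psi,1) = h_{X_n/X}(\bm{\zeta}_{\psi},1)$, I would obtain
\begin{equation}
\kappa_v(X_n) = \frac{\kappa_v(X)}{p^{nd}}\prod_{\bm{\zeta}\in W^d(n)\setminus\{\bm{1}\}} h_{X_n/X}(\bm{\zeta},1).
\end{equation}

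Next I would take $p$-adic valuations. Using the interpolation property $h_{X_n/X}(\bm{\zeta},1) = Q_{X_\infty/X}(\bm{\zeta}-\bm{1})$ for $\bm{\zeta}\in W^d(n)$, this yields
\begin{equation}
\val_p(\kappa_v(X_n)) = \val_p(\kappa_v(X)) - nd + \sum_{\bm{\zeta}\in W^d(n)\setminus\{\bm{1}\}} \val_p\bigl(Q_{X_\infty/X}(\bm{\zeta}-\bm{1})\bigr).
\end{equation}
The hypothesis $\kappa_v(X_n)\neq 0$ for all $n$ guarantees, as recorded in the paragraph preceding the statement, that $Q_{X_\infty/X}$ satisfies both the technical condition and the nonvanishing assumption of Theorem \ref{thm_9}. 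Applying that theorem with $F = Q_{X_\infty/X}$ rewrites the sum, for all $n \gg 0$, as
\begin{equation}
\bigl(\mu(Q_{X_\infty/X})p^n + \lambda(Q_{X_\infty/X})n\bigr)p^{(d-1)n} + \sum_{i=1}^{d-1}(\mu_i p^n + \lambda_i n)p^{(d-1-i)n} + \nu',
\end{equation}
with $\mu_i, \lambda_i \in \mathbb{Q}$, which immediately gives $\mu_v(X_\infty/X) = \mu(Q_{X_\infty/X})$.

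It remains to absorb the two leftover terms $\val_p(\kappa_v(X)) - nd$, and this bookkeeping is the only genuinely subtle point. The constant $\val_p(\kappa_v(X))$ merges into $\nu$. The linear term $-nd$ carries no factor of $p^n$, so I would match it against the unique pure-$n$ contribution in the expansion. When $d=1$ the middle sum is empty and the only such term is the leading $\lambda(Q_{X_\infty/X})n$; absorbing $-n$ there produces $\lambda_v(X_\infty/X) = \lambda(Q_{X_\infty/X}) - 1$. When $d \geq 2$ the pure-$n$ term is instead the $\lambda_{d-1}n$ appearing in the $i = d-1$ summand, where $p^{(d-1-i)n}=1$, so $-nd$ is absorbed into $\lambda_{d-1}$ and the leading coefficient $\lambda_v(X_\infty/X) = \lambda(Q_{X_\infty/X})$ is left unchanged. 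This is exactly the phenomenon flagged after Theorem \ref{thm_8}: the factor $\# G$ is present in the rooted formula \eqref{eq_36} but absent from the non-rooted formula \eqref{eq_37}, and it is precisely this $\# G_n = p^{nd}$ that shifts the leading $\lambda$-invariant in the $d=1$ case. The main thing to verify carefully is thus the matching of the $-nd$ term against the two regimes, using the freedom to choose the lower-order $\mu_i,\lambda_i \in \mathbb{Q}$ to reindex without disturbing the top invariants.
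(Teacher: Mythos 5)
Your proposal is correct and follows essentially the same route as the paper's proof of Theorem \ref{thm_2}: specialize the rooted decomposition formula \eqref{eq_14}/\eqref{eq_36} to each layer, use the interpolation identity $h_{X_n/X}(\psi,1)=Q_{X_\infty/X}(\bm{\zeta}_\psi-\bm{1})$ to convert the character sum into a sum over $W^d(n)\setminus\{\bm{1}\}$, invoke the Cuoco--Monsky asymptotic (Theorem \ref{thm_1}), and absorb the $-nd$ term into the leading $\lambda$ when $d=1$ and into the $i=d-1$ summand when $d\geq 2$. The only cosmetic difference is that you cite the nonvanishing of $Q_{X_\infty/X}$ on $W^d$ from the surrounding discussion, whereas the paper derives it in one line from \eqref{eq_14} and \eqref{eq_20}; the derivation is immediate from your own displayed product formula, so nothing is missing.
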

Theorem \ref{thm_10} is the vertex-weighted analogue of \cite[Theorem 6.2]{DV23}, \cite[Theorem 4.3]{KM24}, and \cite[Theorem 3.9]{AMT}.
The division into cases caused in \eqref{eq_40} (and in \cite[Theorem 3.9]{AMT}) is due to the denominator $\#G$ of the right-hand side in \eqref{eq_36}.
Keeping this remark in mind, we also have another result, which gives more natural interpretation for the Iwasawa-type formula for vertex-weighted graphs.
\begin{theorem}[= Theorem \ref{thm_3}]\label{thm_11}
Let $X_{\infty}/X$ be a $\mathbb{Z}_p^d$-tower over a $K$-valued vertex-weighted graph $X$ such that $X_n$ is connected for all $n$, and $\kappa(X_n) \neq 0$ for all $n$. Then there exist $\mu_1,\dotsc,\ \mu_{d-1},\ \lambda_1,\dotsc,\ \lambda_{d-1},\nu \in \mathbb{Q}$ such that 
\begin{equation}\label{eq_39}
\val_p(\kappa(X_n))=(\mu(X_{\infty}/X) p^n+\lambda(X_{\infty}/X) n)p^{(d-1)n}+\left(\sum_{i=1}^{d-1}(\mu_i p^n+\lambda n)p^{(d-1-i)n}\right)+\nu
\end{equation}
for all $n \gg 0$, where $\mu(X_{\infty}/X)=\mu(Q_{X_{\infty}/X}),\ \lambda(X_{\infty}/X)=\lambda(Q_{X_{\infty}/X})$.
\end{theorem}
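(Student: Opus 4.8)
The plan is to run exactly the argument used for the rooted case (Theorem~\ref{thm_10}), but to feed in the \emph{non-rooted} decomposition formula of Theorem~\ref{thm_8}, whose right-hand side carries no $\#G$ factor. First I would fix $n$ and view $X_n/X$ as a finite Galois cover with group $G_n=(\mathbb{Z}/p^n\mathbb{Z})^d$. Since $G_n$ is abelian, every irreducible representation is one-dimensional, and Pontryagin duality identifies $\widehat{G_n}\setminus\{\triv_{G_n}\}$ with $W^d(n)\setminus\{\bm{1}\}$, the trivial character corresponding to $\bm{1}$. Applying Theorem~\ref{thm_8} to $Y=X_n$ then gives
\begin{equation}
\kappa(X_n)=\kappa(X)\prod_{\bm{\zeta}\in W^d(n)\setminus\{\bm{1}\}}h_{X_n/X}(\bm{\zeta},1).
\end{equation}

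Next I would apply $\val_p$ to both sides and use the interpolation property $Q_{X_\infty/X}(\bm{\zeta}-\bm{1})=h_{X_n/X}(\bm{\zeta},1)$ to rewrite the result as
\begin{equation}
\val_p(\kappa(X_n))=\val_p(\kappa(X))+\sum_{\bm{\zeta}\in W^d(n)\setminus\{\bm{1}\}}\val_p\bigl(Q_{X_\infty/X}(\bm{\zeta}-\bm{1})\bigr).
\end{equation}
The hypothesis $\kappa(X_n)\neq 0$ for all $n$ is precisely what guarantees that $Q_{X_\infty/X}$ satisfies the technical condition and does not vanish on $W^d$ (as noted just before Theorem~\ref{thm_10}), so Theorem~\ref{thm_9} applies to $F=Q_{X_\infty/X}$ and supplies the asymptotic expansion of the character sum for all $n\gg 0$. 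Absorbing the constant $\val_p(\kappa(X))$ into the constant term $\nu$ yields the claimed formula, with $\mu(X_\infty/X)=\mu(Q_{X_\infty/X})$ and $\lambda(X_\infty/X)=\lambda(Q_{X_\infty/X})$.

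The step I expect to be the real heart of the matter is not any computation but the bookkeeping that explains why the $\lambda$-invariant needs \emph{no} correction here, in contrast with Theorem~\ref{thm_10}. In the rooted case the formula of Theorem~\ref{thm_7} contributes an extra factor $1/\#G_n=p^{-dn}$, hence a term $-dn$ in the valuation; when $d=1$ this term $-n$ collides with the leading coefficient of $n\,p^{(d-1)n}=n$ and shifts $\lambda(Q_{X_\infty/X})$ to $\lambda(Q_{X_\infty/X})-1$, whereas for $d\geq 2$ it is of strictly lower order (it is a multiple of $n\,p^{0}$) and is harmlessly absorbed into $\lambda_{d-1}$. Since Theorem~\ref{thm_8} has no such $\#G$ denominator, no term of the form $-dn$ ever appears, and $\lambda(X_\infty/X)=\lambda(Q_{X_\infty/X})$ holds uniformly in $d$. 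I would therefore present this theorem immediately after Theorem~\ref{thm_10}, emphasizing that the two proofs are identical except for this single difference in the decomposition formula, and that it is exactly this difference which the remark following Theorem~\ref{thm_8} anticipates.
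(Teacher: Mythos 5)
Your proposal is correct and follows essentially the same route as the paper's own proof of Theorem \ref{thm_3}: apply the non-rooted decomposition formula \eqref{eq_21}, use the interpolation property \eqref{eq_20} to convert the character sum into a sum over $W^d(n)\setminus\{\bm{1}\}$, invoke the asymptotic expansion of Theorem \ref{thm_1}, and absorb $\val_p(\kappa(X))$ into $\nu$. Your closing explanation of why no $\lambda$-correction arises (the absence of the $\#G_n$ denominator, hence of the $-dn$ term that in the rooted case collides with $\lambda n\,p^{(d-1)n}$ when $d=1$) is exactly the mechanism visible in the paper's proof of Theorem \ref{thm_2}.
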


We also prove an analogue of Kida's formula for vertex-weighted graphs with additional formula for Iwasawa $\mu$-invariants.
\begin{theorem}[= Theorem \ref{thm_40}]\label{thm_12}
Let $Y/X$ be a Galois cover of $K$-valued vertex-weighted graphs whose Galois group is of $p$ power order.
Let $X_{\infty}/X$ and $Y_{\infty}/Y$ be $\mathbb{Z}_p^d$-towers such that Galois group of $Y_n/X$ is $\Gal(Y/X) \times (\mathbb{Z}/p^n\mathbb{Z})^d$ for all $n$.
If $(a)$, either $(b)$ or $(b)'$, and either $(c)$ or $(c)'$ 
\begin{enumerate}[leftmargin=39pt]
\item[$(a)\phantom{'}$] All the $Y_n$ are connected.
\item[$(b)\phantom{'}$] All the $\kappa_v(Y_n)$ are non-zero for some fixed $v \in V(X)$. 
\item[$(b)'$] All the $\kappa(Y_n)$ are non-zero.
\item[$(c)\phantom{'}$] $\val_p(w_v) \geq \mu(X_{\infty}/X)/\#V(X) $ for every $v \in V(X)$.
\item[$(c)'$] $\val_p(w_v) \geq \mu(Y_{\infty}/Y)/\#V(Y) $ for every $v \in V(Y)$.
\end{enumerate}
are satisfied, then the following $(1)$ and $(2)$ hold.
\begin{enumerate}[label=$(\arabic*)$]
\item 
 $\mu_v(Y_{\infty}/Y) = [Y:X] \mu_v(X_{\infty}/X)$ for each $v \in V(X)$, and $\mu(Y_{\infty}/Y)=[Y:X]\mu(X_{\infty}/X)$.
\item
\[
\lambda_v(Y_{\infty}/Y)=
\begin{cases}
\hfill [Y:X]\lambda_v(X_{\infty}/X) \hfill\hfill& \text{if $d \geq 2$,} \\
[Y:X](\lambda_v(X_{\infty}/X)+1)-1 & \text{if $d=1$}.
\end{cases}
\]
for each $v \in V(X)$, and $\lambda(Y_{\infty}/Y)=[Y:X]\lambda(X_{\infty}/X)$.
\end{enumerate}
\end{theorem}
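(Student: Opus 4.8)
The plan is to reduce everything to the two power-series identities
\[
\mu(Q_{Y_\infty/Y}) = [Y:X]\,\mu(Q_{X_\infty/X}), \qquad \lambda(Q_{Y_\infty/Y}) = [Y:X]\,\lambda(Q_{X_\infty/X}),
\]
from which all the stated identities follow through the dictionary of Theorems \ref{thm_10} and \ref{thm_11}. Indeed $\mu_v(X_\infty/X)=\mu(X_\infty/X)=\mu(Q_{X_\infty/X})$, while the rooted and non-rooted $\lambda$-invariants differ from $\lambda(Q_{X_\infty/X})$ only by the additive constant $-1$ in the case $d=1$; substituting these relations turns the two displayed identities into assertions $(1)$ and $(2)$, the $-1$ terms producing exactly the case division in $(2)$. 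So after recording this translation I would work solely with the invariants of the $Q$'s.

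First I would establish the factorization
\[
Q_{Y_\infty/Y} = Q_{X_\infty/X}\cdot\prod_{\rho\in\widehat{G}\setminus\{\triv_G\}}\bigl(Q_{Y_\infty/X}^{\rho}\bigr)^{d_\rho}
\]
in $\Lambda_{d,\mathcal{O}}\otimes_{\mathcal{O}}K$, where $Q_{Y_\infty/X}^{\rho}$ is the series interpolating the values $h_{Y_n/X}(\rho\otimes\psi,1)$ as $\psi$ runs over the characters of $(\mathbb{Z}/p^n\mathbb{Z})^d$. This rests on the Artin formalism of the $h$-functions behind the decomposition formula: inducing a character $\psi$ of $\Gal(Y_n/Y)$ up to $\Gal(Y_n/X)=G\times(\mathbb{Z}/p^n\mathbb{Z})^d$ yields $\bigoplus_{\rho}d_\rho\,(\rho\otimes\psi)$, whence $h_{Y_n/Y}(\psi,1)=\prod_{\rho}h_{Y_n/X}(\rho\otimes\psi,1)^{d_\rho}$, while inflation gives $h_{X_n/X}(\psi,1)=h_{Y_n/X}(\triv_G\otimes\psi,1)$; as both sides of the displayed equation interpolate the same values on the Zariski-dense set $\{\bm{\zeta}_\psi-\bm{1}\}$, they agree as power series. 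Because $\mu$ and $\lambda$ are additive on products, it suffices to control the factors $Q_{Y_\infty/X}^{\rho}$. I would treat $G=\mathbb{Z}/p\mathbb{Z}$ directly: there the nontrivial $\rho$ are the $p-1$ nontrivial characters $\phi$, each of degree one, $Q_{Y_\infty/X}^{\rho}=Q_X^{\alpha,\beta}(\phi,\cdot)$, and the goal becomes $\mu(Q_X^{\alpha,\beta}(\phi,\cdot))=\mu(Q_{X_\infty/X})$ and $\lambda(Q_X^{\alpha,\beta}(\phi,\cdot))=\lambda(Q_{X_\infty/X})$, which by additivity yield $\mu(Q_{Y_\infty/Y})=p\,\mu(Q_{X_\infty/X})$ and $\lambda(Q_{Y_\infty/Y})=p\,\lambda(Q_{X_\infty/X})$. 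An arbitrary $p$-group $G$ then reduces to this case along a composition series, since $[Y:X]$, $\#V$, and the $\mu$- and $\lambda$-invariants all multiply up a tower; in particular the quotient $\mu/\#V$ appearing in $(c)$ is constant along the tower, so the hypotheses descend to each $\mathbb{Z}/p\mathbb{Z}$-layer and $(c)$ and $(c)'$ are equivalent.

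For the base case, note that $Q_X^{\alpha,\beta}(\phi,\cdot)$ and $Q_{X_\infty/X}$ arise as determinants of the weighted matrices $\mathbb{W}_X^{\alpha,\beta}(\phi,\cdot)$ and $\mathbb{W}_X^{\alpha}$, whose entries are products of the fixed square-root vertex weights with voltage power series. A nontrivial character $\phi$ of $\mathbb{Z}/p\mathbb{Z}$ takes values in the $p$-th roots of unity, which lie in $1+\mathfrak{m}$, and the voltage variables specialize to $p$-power roots of unity; hence reduction modulo $\mathfrak{m}$ collapses $\phi$ to the trivial character and gives $Q_X^{\alpha,\beta}(\phi,\cdot)\equiv Q_{X_\infty/X}\pmod{\mathfrak{m}\Lambda_{d,\mathcal{O}}}$. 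When the content of $Q_{X_\infty/X}$ is a unit this congruence at once forces equal contents and equal nonzero reductions in $\Omega_{d,\mathcal{O}}$, hence $\mu(Q_X^{\alpha,\beta}(\phi,\cdot))=\mu(Q_{X_\infty/X})$ and $\lambda(Q_X^{\alpha,\beta}(\phi,\cdot))=\lambda(Q_{X_\infty/X})$. The function of hypothesis $(c)$ is exactly to secure this primitive situation: I would show that the content of $Q_{X_\infty/X}$, that is $\mu(X_\infty/X)$, is carried entirely by the vertex weights, the bound $\val_p(w_v)\geq\mu(X_\infty/X)/\#V(X)$ being precisely what forces any weight-independent part of the content to vanish, after which a common weight factor can be divided out of both series before reducing modulo $\mathfrak{m}$.

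The main obstacle is this weight analysis of $\mu$. The $\lambda$-statements are soft: once contents are matched, equality of the reductions in $\Omega_{d,\mathcal{O}}$ and multiplicativity of degree give them immediately, and the $-1$ corrections are bookkeeping. The $\mu$-invariant is harder because it is only a rational number, so the reduction modulo $\mathfrak{m}$ becomes meaningful only after dividing by the correct --- possibly fractional --- power of a uniformizer, and because the vertex weights enter the content through tree-dependent monomials rather than through one global factor. The delicate task is thus to extract, from the explicit determinantal form of $Q$ supplied by the matrix-tree theorem, a clean splitting of $\mu(X_\infty/X)$ into a weight part and a nonnegative remainder, to check that $(c)$ annihilates the remainder, and to verify that the weight part scales by $[Y:X]$ up the cover (which, with $\#V(Y)=[Y:X]\#V(X)$, re-confirms $(c)\Leftrightarrow(c)'$). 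Granting this, the identities $\mu(Q_X^{\alpha,\beta}(\phi,\cdot))=\mu(Q_{X_\infty/X})$ and $\lambda(Q_X^{\alpha,\beta}(\phi,\cdot))=\lambda(Q_{X_\infty/X})$ over the $p-1$ nontrivial $\phi$, combined with the factorization and the dévissage, deliver $(1)$ and $(2)$.
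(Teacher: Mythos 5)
Your skeleton is the same as the paper's: reduce everything to the invariants of $Q$ via Theorems \ref{thm_2} and \ref{thm_3}; factor $Q_{Y}^{\alpha\circ\pi}$ over the characters $\phi$ of $\mathbb{Z}/p\mathbb{Z}$ using the induction property and the direct sum rule for $h$-functions (the paper does this only at the level of values, \eqref{eq_27}, which already suffices for the additivity statements \eqref{eq_28}--\eqref{eq_29}; your Zariski-density upgrade to an identity of power series is valid but unnecessary); reduce a general $p$-group to $\mathbb{Z}/p\mathbb{Z}$ along a central subgroup of order $p$; and exploit the congruence $Q_X^{\alpha,\beta}(\phi,T)\equiv Q_X^{\alpha}(T)\pmod{\mathfrak{m}}$ coming from $\phi(\beta(e))\in 1+\mathfrak{m}$, which settles both invariants once the relevant content is zero.

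The genuine gap is the step you explicitly defer (``Granting this''), and the route you sketch for it is misdirected. Nothing needs to be shown about the content being ``carried entirely by the vertex weights,'' and no splitting of $\mu(X_{\infty}/X)$ into a weight part and a nonnegative remainder via tree-dependent monomials is required. The paper's resolution is a one-line homogeneity observation: every entry of $\bm{D}_X-\mathbb{W}_X^{\alpha}(T)$ is homogeneous of degree one in the weights $(w_v)_{v\in V(X)}$ (entries of $\bm{D}_X$ are sums of weights, entries of $\mathbb{W}_X^{\alpha}$ carry $\sqrt{w_u}\sqrt{w_v}$), so $Q_X^{\alpha}$ is homogeneous of degree $m=\#V(X)$. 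Hence, with $M=\mu(X_{\infty}/X)\,e(K/\mathbb{Q}_p)$ and after adjoining $\varpi^{1/m}$, replacing every weight $w_v$ by $\varpi^{-M/m}w_v$ multiplies $Q_X^{\alpha}$ by exactly $\varpi^{-M}$, producing a vertex-weighted graph whose $\mu$-invariant is zero \emph{by the very definition of content}; no analysis of where that content comes from is needed. Hypothesis $(c)$ enters at exactly one point: it says $\val_p\bigl(\varpi^{-M/m}w_v\bigr)\geq 0$, so the rescaled matrices $\mathbb{W}_{\varpi^{-M/m}X}^{\alpha,\beta}(\phi,T)$ are integral and reduction modulo $\mathfrak{m}$, hence \eqref{eq_30}, is legitimate; this yields $\mu\bigl(Q_{\varpi^{-M/m}X}^{\alpha,\beta}(\phi,\cdot)\bigr)=0=\mu\bigl(Q_{\varpi^{-M/m}X}^{\alpha}\bigr)$ and $\lambda\bigl(Q_{\varpi^{-M/m}X}^{\alpha,\beta}(\phi,\cdot)\bigr)=\lambda\bigl(Q_{\varpi^{-M/m}X}^{\alpha}\bigr)$ for every $\phi$, and additivity finishes the degree-$p$ case. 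The factor $[Y:X]$ is again homogeneity: $\#V(Y)=pm$, so the same rescaling multiplies $Q_Y^{\alpha\circ\pi}$ by $\varpi^{-pM}$, and undoing it gives $\mu(Y_{\infty}/Y)=p\,\mu(X_{\infty}/X)$ while leaving $\lambda$ untouched. Finally, note that the paper pairs $(b)$ with $(c)$ (rescale by $\mu(X_{\infty}/X)$) and $(b)'$ with $(c)'$ (rescale by $\mu(Y_{\infty}/Y)$, running the same argument in the opposite direction); your claim that $(c)$ and $(c)'$ are equivalent is a \emph{consequence} of the theorem, so it cannot be invoked beforehand to merge the hypotheses or to propagate them along the composition series --- with $(c)$ one must induct bottom-up, with $(c)'$ top-down.
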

Again, the formula for $\lambda$-invariants on rooted weighted complexities divides into cases, but the one for $\lambda$-invariants on weighted-complexities does not.
Note that if we apply these formulas to non-weighted graphs (that can be considered as vertex-weighted graphs whose weights are all 1), assumption $(c)$ and $(c)'$ are equivalent to the condition that $\mu(X_{\infty}/X)=0$ if and only of $\mu(Y_{\infty}/Y)$ thanks to (1) in Theorem \ref{thm_12}. Hence Theorem \ref{thm_12} is a direct generalization of \cite[Theorem 4.1]{RV22}.
\subsection*{Organization of the paper}
In \S \ref{sec_2}, we prepare basic notions for our proofs including the matrix-tree theorem for vertex-weighted graphs.
In \S \ref{subsec_2.1}, we introduce the Laplacian matrices for a field-valued vertex-weighted graphs that is possibly non-simple, and we give the matrix-tree theorem for rooted weighted complexities and weighted complexities.
In \S \ref{subsec_2.2}, we give some explanation of Galois theory of vertex-weighted graphs using the language of voltage assignments.

In \S \ref{sec_3}, we give the proof of the formula of weighted complexities of Galois cover $Y/X$ with $h$-functions, which is the technical heart of our proofs.
In \S \ref{subsec_3.1}, we show fundamental properties of $h$-function, namely direct sum rules and induction properties by a simple calculation of matrices.

In \S \ref{sec_4}, we prove the first main theorem.
In \S \ref{subsec_4.1}, we recall the fundamental theorem of multivariable Iwasawa theory according to \cite[Sec.~3.2]{AMT}.
Then we prove an Iwasawa-type formula for rooted weighted complexities and weighted complexities in \S \ref{subsec_4.2}.

In \S \ref{sec_5}, we give a proof of Kida's formula for vertex-weighted graphs, which is our second main theorem.

\subsection*{Acknowledgements}
The authors would like to thank Taiga Adachi, Kosuke Mizuno, Iwao Sato, and the anonymous referee for valuable comments. The first author was supported by Make New Standards Program for the Next Generation Researchers.
The second author was supported by JSPS KAKENHI, Grant Number 26KJ0138.
\section{Vertex-weighted graph theory}\label{sec_2}
We recall the definitions of graphs in the following \cite{AMT}.
\begin{definition}\ 
\begin{enumerate}[label=(\roman*)]
\item
A \emph{directed graph $X$} is a triple $(V(X), \mathbb{E}(X), \inc_X)$, where \glssymbol{VX}$V(X)$ and \glssymbol{bbEX}$\mathbb{E}(X)$ are sets and where $\inc_X$ is a map from $\mathbb{E}(X)$ to $V(X) \times V(X)$.
The elements of $V(X)$ are called the \emph{vertices of $X$}, the elements of $\mathbb{E}(X)$ are called the \emph{directed edges of $X$}, and $\inc_X$ is called the \emph{incident map}.
We write $(o(e), t(e))$ for the image of an edge $e$ under $\inc_X$, \glssymbol{oe}$o(e)$ is called the \emph{origin of $e$}, and \glssymbol{te}$t(e)$ is called the \emph{terminus of $e$}.
\item
An directed graph $X$ is called \emph{finite} if $V(X)$ and $\mathbb{E}(X)$ are finite sets.
\item 
A \emph{symmetric directed graph $X$} is a directed graph equipped with a map $e \mapsto \overbar{e}$ of $\mathbb{E}(X)$ to itself such that $\overbar{e}\neq e$, $\overbar{\overbar{e}}=e$, and $o(e)=t(\overbar{e})$.
The directed edge \glssymbol{bare}$\overbar{e}$ is called the \emph{inverse directed edge of an edge $e$}.
\item A \emph{subgraph $Y$ of a directed graph $X$} is a directed graph requiring that $V(Y) \subseteq V(X)$, $\mathbb{E}(Y) \subseteq \mathbb{E}(X)$, and $\inc_X|_{\mathbb{E}(Y)}=\inc_Y$.
\end{enumerate}
\end{definition}
\begin{remark}
For a directed graph $X$, consider the quotient set $E(X)$ of $\mathbb{E}(X)$ under the equivalence relation generated by $e \sim \overbar{e}$.
Assigning an equivalence class $[e]$ to $\{o(e), t(e)\}$, we obtain the \emph{incidence map} $\inc_{X_{\mathrm{undir}}}\colon E(X) \to 2^{V(X)}$.
Then the triple \glssymbol{Xundir}$X_{\mathrm{undir}}=\left(V(X), E(X), \inc_{X_{\mathrm{undir}}}\right)$ is a usual undirected graph, and the assignment $X \mapsto X_{\mathrm{undir}}$ from the class of symmetric directed graphs to the class of undirected graphs induces an equivalence of categories.
Henceforth we identify the symmetric directed graphs as the undirected graphs.
\end{remark}
We want to consider spanning trees of a symmetric directed graph.
To do this we first recall the notion of paths and cycles.
\begin{definition}
Let $X$ be a symmetric directed graph.
\begin{enumerate}[label=(\roman*)]
\item A \emph{walk from $u$ to $v$} is a finite sequence of edges $(e_1,e_2,\dotsc, e_l)$ imposing the following condition: $o(e_1)=u$, $t(e_l)=v$, and $t(e_i)=o(e_{i+1})$ for all $i \in \{1,2,\dotsc, l-1\}$. 
\item
A \emph{trail from $u$ to $v$} is a walk $(e_1,e_2,\dotsc, e_l)$ from $u$ to $v$  imposing the following condition: $\{e_i, \overbar{e}_i\} \neq \{e_j, \overbar{e}_j\}$ for distinct $i, j \in \{1,2,\dotsc, l\}$.
\item
A \emph{path from $u$ to $v$} is a trail $(e_1,e_2,\dotsc, e_l)$ from $u$ to $v$ imposing the following condition: $\#\bigcup_{i=1}^l \{o(e_i), t(e_i)\} = l+1$.
\item
A \emph{cycle} is a trail $(e_1,e_2,\dotsc, e_l)$ imposing the following conditions: $o(e_1)=t(e_l)$, and\linebreak $\#\bigcup_{i=1}^l \left\{o(e_i), t(e_i)\right\} = l$.
%%%%\item
%%%%A closed walk $(e_1,e_2,\dotsc, e_l)$ is a walk with the property that $o(e_1)=t(e_l)$.
%%%%\item
%%%%For two closed walks $C=(e_1,e_2,\dotsc,e_l)$ and $D=(e_{l+1}, e_{l+2},\dotsc, e_{s})$ starting from the same vertex, the \emph{product $CD$ of $C$ and $D$} is defined by $CD=(e_1, e_2,\dotsc, e_l, e_{l+1},\dotsc, e_s)$, which is a closed walk.
%%%%\item
%%%%A walk $(e_1,e_2,\dotsc, e_l)$ is said to \emph{have a backtrack} if there exists a number $i$ such that $\overbar{e}_{i+1}=e_i$.
%%%%\item
%%%%A walk $(e_1,e_2,\dotsc, e_l)$ is said to \emph{have a tail} if $\overbar{e}_{l}=e_1$.
%%%%\item A \emph{prime walk of $X$} is a closed walk $C$ imposing the following conditions: it has no backtrack, it has no trail, and there is no closed walk $D$ such that $C = D^k$ for some positive integer $k$.
\end{enumerate}
\end{definition}
\begin{definition}\ 
\begin{enumerate}[label=(\roman*)]
\item A symmetric directed graph is said to be \emph{connected} if there is a path from $u$ to $v$ for every distinct $u,\ v \in V(X)$. 
\item
A \emph{forest $F$} is a symmetric directed graph that has no cycle.
\item 
A \emph{tree $T$} is a connected forest.
Note that there exists a unique path from $u$ to $v$ for every distinct $u,\ v \in V(X)$, and
the number \big(written \glssymbol{dTuv}$d_T(u, v)$\big) of edges consisting of the path is called the \emph{distance between $u$ and $v$ in $T$}
\item 
A \emph{spanning tree $T$ of a connected symmetric directed graph $X$} is a subtree of $X$ with the property that $V(T) = V(X)$.
\item 
For a tree $T$ and for a vertex $v \in V(T)$, the \emph{rooted tree \glssymbol{Tv}$T_v$ toward $v$} is a directed subgraph whose edge set is given by $\mathbb{E}(T_v)=\{\,e \in \mathbb{E}(T) \;|\; d_T(v, o(e))>d_T(v, t(e))\,\}$.
\end{enumerate}
\end{definition}
\begin{example}\label{ex_1}
We illustrate a symmetric directed graph $X$ in Figure \ref{fig_1} with only one directions.
For instance, we omit illustrating the inverse edge $\overbar{e}_1$ of $e_1$.
The following (1)--(4) are examples of walks, trails, paths, {\and} cycles.
\begin{enumerate}[label=(\arabic*)]
\item 
$C_1=(e_1, e_2, \overbar{e}_2, e_5)$ is a walk from $v_1$ to $v_4$, but it is not a trail.
\item 
$C_2=(e_1, e_2, \overbar{e}_3, e_5)$ is a trail from $v_1$ to $v_4$, but it is not a path.
\item 
$C_3=(e_1, e_5)$ is a path from $v_1$ to $v_4$.
\item 
$C_4=(e_6)$, $C_5=(e_2, \overbar{e}_3)$ and $C_6=(e_1,e_2,e_4)$ are cycles.
\end{enumerate}
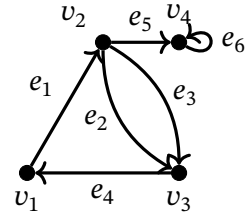
\begin{figure}[t]
\centering
\begin{tikzpicture}[scale=1]
\coordinate[label=below:$v_1$,draw, circle, inner sep=1pt, fill=black](v1)at(0,0);
\coordinate[label=above left:$v_2$,draw, circle, inner sep=1pt, fill=black](v2)at({2*cos(pi/3 r)}, {2*sin(pi/3 r)});
\coordinate[label=below:$v_3$,draw, circle, inner sep=1pt, fill=black](v3)at(2,0);
\coordinate[label=above:$v_4$,draw, circle, inner sep=1pt, fill=black](v4)at(2,{2*sin(pi/3 r)});

\draw(v1)[->, thick] to  node[above left]{$e_1$} (v2);
\draw(v3)[->, thick] to  node[below]{$e_4$} (v1);
\draw(v2)[->, thick]  to [bend left]  node[right]{$e_3$} (v3);
\draw(v2)[->, thick]  to [bend right]  node[left]{$e_2$} (v3);
\draw(v2)[->, thick] to  node[above]{$e_5$} (v4);
\draw(v4)[->, thick]to[out=330, in=30, loop] node[right]{$e_6$} ();
\end{tikzpicture}
\caption{symmetric directed graph $X$}\label{fig_1}
\end{figure}
Figure \ref{fig_2} illustrates all the spanning trees of $X$. (Note that the $T_i$ are symmetric directed graphs.)
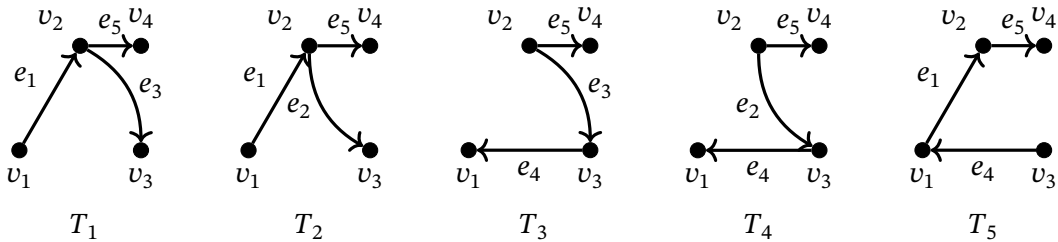
\begin{figure}[tb]
\begin{minipage}{0.18\columnwidth}
\centering
\begin{tikzpicture}[scale=0.8]
\coordinate[label=below:$v_1$,draw, circle, inner sep=1pt, fill=black](v1)at(0,0);
\coordinate[label=above left:$v_2$,draw, circle, inner sep=1pt, fill=black](v2)at({2*cos(pi/3 r)}, {2*sin(pi/3 r)});
\coordinate[label=below:$v_3$,draw, circle, inner sep=1pt, fill=black](v3)at(2,0);
\coordinate[label=above:$v_4$,draw, circle, inner sep=1pt, fill=black](v4)at(2,{2*sin(pi/3 r)});

\draw(v1)[->, thick] to  node[above left]{$e_1$} (v2);
\draw(v2)[->, thick]  to [bend left]  node[right]{$e_3$} (v3);
\draw(v2)[->, thick] to  node[above]{$e_5$} (v4);
\end{tikzpicture}\\
$T_1$
\end{minipage}
\begin{minipage}{0.18\columnwidth}
\centering
\begin{tikzpicture}[scale=0.8]
\coordinate[label=below:$v_1$,draw, circle, inner sep=1pt, fill=black](v1)at(0,0);
\coordinate[label=above left:$v_2$,draw, circle, inner sep=1pt, fill=black](v2)at({2*cos(pi/3 r)}, {2*sin(pi/3 r)});
\coordinate[label=below:$v_3$,draw, circle, inner sep=1pt, fill=black](v3)at(2,0);
\coordinate[label=above:$v_4$,draw, circle, inner sep=1pt, fill=black](v4)at(2,{2*sin(pi/3 r)});

\draw(v1)[->, thick] to  node[above left]{$e_1$} (v2);
\draw(v2)[->, thick]  to [bend right]  node[left]{$e_2$} (v3);
\draw(v2)[->, thick] to  node[above]{$e_5$} (v4);
\end{tikzpicture}\\
$T_2$
\end{minipage}
\begin{minipage}{0.18\columnwidth}
\centering
\begin{tikzpicture}[scale=0.8]
\coordinate[label=below:$v_1$,draw, circle, inner sep=1pt, fill=black](v1)at(0,0);
\coordinate[label=above left:$v_2$,draw, circle, inner sep=1pt, fill=black](v2)at({2*cos(pi/3 r)}, {2*sin(pi/3 r)});
\coordinate[label=below:$v_3$,draw, circle, inner sep=1pt, fill=black](v3)at(2,0);
\coordinate[label=above:$v_4$,draw, circle, inner sep=1pt, fill=black](v4)at(2,{2*sin(pi/3 r)});

\draw(v3)[->, thick] to  node[below]{$e_4$} (v1);
\draw(v2)[->, thick]  to [bend left]  node[right]{$e_3$} (v3);
\draw(v2)[->, thick] to  node[above]{$e_5$} (v4);
\end{tikzpicture}\\
$T_3$
\end{minipage}
\begin{minipage}{0.18\columnwidth}
\centering
\begin{tikzpicture}[scale=0.8]
\coordinate[label=below:$v_1$,draw, circle, inner sep=1pt, fill=black](v1)at(0,0);
\coordinate[label=above left:$v_2$,draw, circle, inner sep=1pt, fill=black](v2)at({2*cos(pi/3 r)}, {2*sin(pi/3 r)});
\coordinate[label=below:$v_3$,draw, circle, inner sep=1pt, fill=black](v3)at(2,0);
\coordinate[label=above:$v_4$,draw, circle, inner sep=1pt, fill=black](v4)at(2,{2*sin(pi/3 r)});

\draw(v3)[->, thick] to  node[below]{$e_4$} (v1);
\draw(v2)[->, thick]  to [bend right]  node[left]{$e_2$} (v3);
\draw(v2)[->, thick] to  node[above]{$e_5$} (v4);
\end{tikzpicture}\\
$T_4$
\end{minipage}
\begin{minipage}{0.18\columnwidth}
\centering
\begin{tikzpicture}[scale=0.8]
\coordinate[label=below:$v_1$,draw, circle, inner sep=1pt, fill=black](v1)at(0,0);
\coordinate[label=above left:$v_2$,draw, circle, inner sep=1pt, fill=black](v2)at({2*cos(pi/3 r)}, {2*sin(pi/3 r)});
\coordinate[label=below:$v_3$,draw, circle, inner sep=1pt, fill=black](v3)at(2,0);
\coordinate[label=above:$v_4$,draw, circle, inner sep=1pt, fill=black](v4)at(2,{2*sin(pi/3 r)});

\draw(v1)[->, thick] to  node[above left]{$e_1$} (v2);
\draw(v3)[->, thick] to  node[below]{$e_4$} (v1);
\draw(v2)[->, thick] to  node[above]{$e_5$} (v4);
\end{tikzpicture}\\
$T_5$
\end{minipage}
\caption{five spanning trees of $X$}\label{fig_2}
\end{figure}
Figure \ref{fig_3} illustrates all the rooted trees of $T_1$.
(Note that the $T_i$ are not symmetric directed graphs.)
%%%%%%%%%%%
\begin{figure}
\begin{minipage}{0.22\columnwidth}
\centering
\begin{tikzpicture}[scale=0.8]
\coordinate[label=below:$v_1$,draw, circle, inner sep=1pt, fill=black](v1)at(0,0);
\coordinate[label=above left:$v_2$,draw, circle, inner sep=1pt, fill=black](v2)at({2*cos(pi/3 r)}, {2*sin(pi/3 r)});
\coordinate[label=below:$v_3$,draw, circle, inner sep=1pt, fill=black](v3)at(2,0);
\coordinate[label=above:$v_4$,draw, circle, inner sep=1pt, fill=black](v4)at(2,{2*sin(pi/3 r)});

\draw(v2)[->, thick] to  node[above left]{$\overbar{e}_1$} (v1);
\draw(v3)[->, thick]  to [bend right]  node[right]{$\overbar{e}_3$} (v2);
\draw(v4)[thick] to  node[above]{$\overbar{e}_5$} (v2);
\draw(v4)[->, thick, shorten > = 4pt] to  (v2);
\end{tikzpicture}\\
$(T_1)_{v_1}$
\end{minipage}
\begin{minipage}{0.22\columnwidth}
\centering
\begin{tikzpicture}[scale=0.8]
\coordinate[label=below:$v_1$,draw, circle, inner sep=1pt, fill=black](v1)at(0,0);
\coordinate[label=above left:$v_2$,draw, circle, inner sep=1pt, fill=black](v2)at({2*cos(pi/3 r)}, {2*sin(pi/3 r)});
\coordinate[label=below:$v_3$,draw, circle, inner sep=1pt, fill=black](v3)at(2,0);
\coordinate[label=above:$v_4$,draw, circle, inner sep=1pt, fill=black](v4)at(2,{2*sin(pi/3 r)});

\draw(v1)[->, thick] to  node[above left]{$e_1$} (v2);
\draw(v3)[->, thick]  to [bend right]  node[right]{$\overbar{e}_3$} (v2);
\draw(v4)[thick] to  node[above]{$\overbar{e}_5$} (v2);
\draw(v4)[->, thick, shorten >=4pt] to (v2);
\end{tikzpicture}\\
$(T_1)_{v_2}$
\end{minipage}
\begin{minipage}{0.22\columnwidth}
\centering
\begin{tikzpicture}[scale=0.8]
\coordinate[label=below:$v_1$,draw, circle, inner sep=1pt, fill=black](v1)at(0,0);
\coordinate[label=above left:$v_2$,draw, circle, inner sep=1pt, fill=black](v2)at({2*cos(pi/3 r)}, {2*sin(pi/3 r)});
\coordinate[label=below:$v_3$,draw, circle, inner sep=1pt, fill=black](v3)at(2,0);
\coordinate[label=above:$v_4$,draw, circle, inner sep=1pt, fill=black](v4)at(2,{2*sin(pi/3 r)});

\draw(v1)[->, thick] to  node[above left]{$e_1$} (v2);
\draw(v2)[->, thick]  to [bend left]  node[right]{$e_3$} (v3);
\draw(v4)[->, thick] to  node[above]{$\overbar{e}_5$} (v2);
\end{tikzpicture}\\
$(T_1)_{v_3}$
\end{minipage}
\begin{minipage}{0.22\columnwidth}
\centering
\begin{tikzpicture}[scale=0.8]
\coordinate[label=below:$v_1$,draw, circle, inner sep=1pt, fill=black](v1)at(0,0);
\coordinate[label=above left:$v_2$,draw, circle, inner sep=1pt, fill=black](v2)at({2*cos(pi/3 r)}, {2*sin(pi/3 r)});
\coordinate[label=below:$v_3$,draw, circle, inner sep=1pt, fill=black](v3)at(2,0);
\coordinate[label=above:$v_4$,draw, circle, inner sep=1pt, fill=black](v4)at(2,{2*sin(pi/3 r)});

\draw(v1)[->, thick, shorten >=5pt] to (v2);
\draw(v1)[thick] to  node[above left]{$e_1$} (v2);
\draw(v3)[->, thick]  to [bend left]  node[right]{$\overbar{e}_3$} (v2);
\draw(v2)[->, thick] to  node[above]{$e_5$} (v4);
\end{tikzpicture}\\
$(T_1)_{v_4}$
\end{minipage}
\caption{four rooted trees of $T_1$}\label{fig_3}
\end{figure}
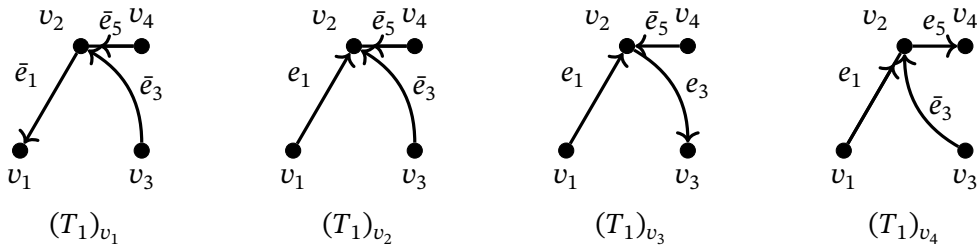
\end{example}
\subsection{Vertex-weighted matrix-tree theorem}\label{subsec_2.1}
The materials in this subsection is based on \cite{MR1401006}, but we slightly generalize their results.
Concretely speaking, we show that the vertex-weighted matrix-tree theorem works on some field-valued weights and on possibly non-simple graphs.
\begin{definition}
Let $K$ be a field.
A \emph{$K$-valued vertex-weighted graph} is a symmetric directed graph $X$ equipped with a ``weight'' function $w\colon V(X) \to K$.
We write \glssymbol{wv}$w_v$ for the weight of $v \in V(X)$.
\end{definition}
For a symmetric directed finite graph $X$ with no vertex-weights nor edge-weights, 
the \emph{degree matrix $\mathbfit{D}_X$ of $X$} is a diagonal matrix labeled by the vertices of $X$ and whose $(u,u)$-entry is $\#\{\,e \in \mathbb{E}(X)\;|\;o(e)=u\,\}$, the \emph{adjacency matrix $\mathbfit{A}_X$ of $X$} is a square matrix labeled by the vertices of $X$ and whose $(u,v)$-entry is $\#\{\,e \in \mathbb{E}(X) \;|\; o(e)=u,\ t(e)=v\,\}$, and
the \emph{Laplacian matrix $\mathbfit{L}_X$ of $X$} is the difference
\begin{equation}\label{eq_5}
\mathbfit{L}_X=\mathbfit{D}_X-\mathbfit{A}_X.
\end{equation}
Here these matrices can be considered as usual matrices using a fixed total order on $V(X)$, and our results do not depend on the choice of every total order.
Using this numbering, products of these kinds of matrices can be calculated.

The Laplacian matrix of a vertex-weighted finite graph is defined in the following, way that naturally generalizes usual Laplacian matrices of symmetric directed finite graphs.
\begin{definitions}
Let $K$ be a field, and let $X$ be a $K$-valued vertex-weighted finite graph.
\begin{enumerate}[label=(\roman*)]
\item 
The \emph{degree matrix \glssymbol{bfD}$\mathbfit{D}_X$ of $X$} is a diagonal matrix labeled by the vertices of $X$ and whose $(u,u)$ entry is 
\begin{equation}
\mathbfit{D}_X(u,u)=\sum_{\substack{e \in \mathbb{E}(X)\colon o(e)=u}}w_{t(e)}.
\end{equation}
\item 
The \emph{weighted adjacency matrix \glssymbol{bfW}$\mathbfit{D}_X$ of $X$} is a square matrix labeled by the vertices of $X$ and whose $(u,v)$ entry is
\begin{equation}
\mathbfit{W}_X(u,v)=\sum_{\substack{e \in \mathbb{E}(X)\colon\\ o(e)=u, \\ t(e)=v}}w_{v}.
\end{equation}
\item
The \emph{Laplacian matrix \glssymbol{bfL}$\mathbfit{L}_X$ of $X$} is the difference
\begin{equation}\label{eq_6}
\mathbfit{L}_X= \mathbfit{D}_X-\mathbfit{W}_X.
\end{equation}
\end{enumerate}
\end{definitions}
\begin{remark}
When each vertex has weight 1, the Laplacian matrix in \eqref{eq_6} is the same as the Laplacian matrix in \eqref{eq_5}.
\end{remark}
Although $\mathbfit{W}_X$ and hence $\mathbfit{L}_X$ may not be symmetric, one can modify $\mathbfit{L}_X$ to a symmetric matrix.
\begin{definition}
Let $K$ be a field, and let $X$ be a $K$-valued vertex-weighted finite graph.
Fix an algebraic closure $\overbar{K}$, fix an embedding $K \hookrightarrow \overbar{K}$, fix a square root $\sqrt{w_v}$ for each $v \in V(X)$, and set $K'=K\left(\left\{\sqrt{w_v}\;|\; v \in V(X)\right\}\right)$. 
Then we define the \emph{symmetrized weighted adjacency matrix} \glssymbol{calW}$\mathcal{W}_X$ over $K'$ by setting the $(u,v)$-entry as 
\begin{equation}
\mathcal{W}_X(u,v)=\sum_{\substack{e \in \mathbb{E}(X)\colon \\ o(e)=u, \\ t(e)=v}}\sqrt{w_u}\sqrt{w_v}.
\end{equation}
The \emph{symmetrized Laplacian matrix \glssymbol{calL}$\mathcal{L}_X$} is the difference
\begin{equation}
\mathcal{L}_X=\mathbfit{D}_X-\mathcal{W}_X.
\end{equation}
\end{definition}
\begin{proposition}\label{prop_1}%%%%%%%%%%%%%
Let $K$ be a field, let $X$ be a vertex $K$-valued weighted finite graph, and let \glssymbol{scrW}$\mathscr{W}_X$ be a diagonal matrix whose $(v,v)$-entry is $w_v$.
Note that $\mathscr{W}_X$ has a square root, and let $\sqrt{\mathscr{W}_X}$ denote the diagonal matrix whose $(v,v)$-entry is $\sqrt{w_v}$.
Then
\begin{equation}
\sqrt{\mathscr{W}_X}\mathbfit{W}_X= \mathcal{W}_X\sqrt{\mathscr{W}_X},
\end{equation}
and hence
\begin{equation}\label{eq_1}
\sqrt{\mathscr{W}_X}\mathbfit{L}_X= \mathcal{L}_X\sqrt{\mathscr{W}_X}.
\end{equation}
\end{proposition}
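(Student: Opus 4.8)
The plan is to verify both identities by a direct comparison of matrix entries, exploiting that $\mathscr{W}_X$ and hence $\sqrt{\mathscr{W}_X}$ is diagonal. First I would record the elementary observation that left-multiplication by the diagonal matrix $\sqrt{\mathscr{W}_X}$ scales the $u$-th row of any matrix by $\sqrt{w_u}$, while right-multiplication by $\sqrt{\mathscr{W}_X}$ scales the $v$-th column by $\sqrt{w_v}$. Concretely, for any matrix $M$ labeled by $V(X)$ one has $(\sqrt{\mathscr{W}_X}M)(u,v)=\sqrt{w_u}\,M(u,v)$ and $(M\sqrt{\mathscr{W}_X})(u,v)=M(u,v)\sqrt{w_v}$.

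For the first identity I would apply this to $M=\bm{W}_X$ on the left and to $M=\mathcal{W}_X$ on the right. Using the definition of $\bm{W}_X$, the $(u,v)$-entry of $\sqrt{\mathscr{W}_X}\bm{W}_X$ equals $\sqrt{w_u}\sum_{e\colon o(e)=u,\,t(e)=v}w_v$, and using the definition of $\mathcal{W}_X$, the $(u,v)$-entry of $\mathcal{W}_X\sqrt{\mathscr{W}_X}$ equals $\bigl(\sum_{e\colon o(e)=u,\,t(e)=v}\sqrt{w_u}\sqrt{w_v}\bigr)\sqrt{w_v}$. Both sums collapse to $\sum_{e\colon o(e)=u,\,t(e)=v}\sqrt{w_u}\,w_v$, so the entries agree and the first identity follows.

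For the second identity I would first note that $\bm{D}_X$ is diagonal, hence commutes with the diagonal matrix $\sqrt{\mathscr{W}_X}$, giving $\sqrt{\mathscr{W}_X}\bm{D}_X=\bm{D}_X\sqrt{\mathscr{W}_X}$. Then, expanding $\bm{L}_X=\bm{D}_X-\bm{W}_X$ and distributing, $\sqrt{\mathscr{W}_X}\bm{L}_X=\sqrt{\mathscr{W}_X}\bm{D}_X-\sqrt{\mathscr{W}_X}\bm{W}_X=\bm{D}_X\sqrt{\mathscr{W}_X}-\mathcal{W}_X\sqrt{\mathscr{W}_X}=(\bm{D}_X-\mathcal{W}_X)\sqrt{\mathscr{W}_X}=\mathcal{L}_X\sqrt{\mathscr{W}_X}$, where the middle equality uses both the commutation just observed and the first identity.

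There is essentially no genuine obstacle here: the statement is a purely formal consequence of the definitions, and the only care needed is bookkeeping with the diagonal scaling and the placement of the square-root factors. The one point worth making explicit, already guaranteed by the hypotheses through the passage to $K'$ and the fixed choices of $\sqrt{w_v}$, is that all entries of $\mathcal{W}_X$ and $\sqrt{\mathscr{W}_X}$ lie in the common field $K'$, so the two products are equal as matrices over $K'$ and not merely after an ad hoc choice of roots.
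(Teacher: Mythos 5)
Your proof is correct and follows essentially the same route as the paper: an entry-wise computation showing both products have $(u,v)$-entry $\sum_{e\colon o(e)=u,\ t(e)=v}\sqrt{w_u}\,w_v$, from which the Laplacian identity follows. You are slightly more explicit than the paper in spelling out that $\bm{D}_X$ commutes with the diagonal matrix $\sqrt{\mathscr{W}_X}$, a step the paper leaves implicit.
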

\begin{proof}%%%%%%%%%%%%%%%%
For $u,\ v \in V(X)$, we can calculate
\begin{align*}
\left(\sqrt{\mathscr{W}_X}\mathbfit{W}_X\right)(u,v)
&=\sum_{\substack{e \in \mathbb{E}(X)\colon \\ o(e)=u, \\ t(e)=v}}\sqrt{w_u}w_v\\
&=\left(\mathcal{W}_X\sqrt{\mathscr{W}_X}\right)(u,v).
\end{align*}
This means that $\sqrt{\mathscr{W}_X}\mathbfit{L}_X= \mathcal{L}_X\sqrt{\mathscr{W}_X}$.
\end{proof}
\begin{remark}%%%%%%%%%%%%
When all the weights of the vertices are non-zero, $\sqrt{\mathscr{W}_X}$ is invertible, and hence $\mathbfit{L}_X=\left(\sqrt{\mathscr{W}_X}\right)^{-1}\mathcal{L}_X\sqrt{\mathscr{W}_X}$, that is, $\mathbfit{L}_X$ and $\mathcal{L}_X$ are equivalent.
\end{remark}Since two kinds of Laplacian matrices $\mathbfit{L}_X$ and $\mathcal{L}_X$ can be considered as ``equivalent matrices,'' we will investigate $\mathcal{L}_X$ instead of $\mathbfit{L}_X$.
\begin{corollary}%%%%%%%%%%
For an arbitrary $K$-valued vertex-weighted finite graph $X$, $\mathcal{L}_X\left(\sqrt{\mathscr{W}_X}\mathbfit{1}\right)=\mathbfit{0}$, where $\mathbfit{1}$ denotes the column vector all of whose entries are 1.
Hence $\rank\mathcal{L}_X \leq \#V(X)-1$.
\end{corollary}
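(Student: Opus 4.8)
The plan is to exhibit $\sqrt{\mathscr{W}_X}\,\bm{1}$ as an explicit element of the kernel of $\mathcal{L}_X$ and then read off the rank bound. Rather than compute with $\mathcal{L}_X$ directly, I would first work with the unsymmetrized Laplacian $\bm{L}_X=\bm{D}_X-\bm{W}_X$, show that $\bm{L}_X\bm{1}=\bm{0}$, and then transport this to $\mathcal{L}_X$ using the intertwining relation \eqref{eq_1} from Proposition \ref{prop_1}.

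The content of the first step is a single reindexing. For each vertex $u$ the $u$-th entry of $\bm{L}_X\bm{1}$ is
\begin{equation*}
(\bm{L}_X\bm{1})(u)=\bm{D}_X(u,u)-\sum_{v\in V(X)}\bm{W}_X(u,v),
\end{equation*}
and the row sum of $\bm{W}_X$ collapses:
\begin{equation*}
\sum_{v\in V(X)}\bm{W}_X(u,v)=\sum_{v\in V(X)}\ \sum_{\substack{e\in\mathbb{E}(X)\colon\\ o(e)=u,\ t(e)=v}}w_{t(e)}=\sum_{\substack{e\in\mathbb{E}(X)\colon\\ o(e)=u}}w_{t(e)}=\bm{D}_X(u,u),
\end{equation*}
since every edge $e$ with $o(e)=u$ is counted exactly once, in the inner sum indexed by $v=t(e)$. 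Hence $\bm{L}_X\bm{1}=\bm{0}$, and applying \eqref{eq_1} yields
\begin{equation*}
\mathcal{L}_X\bigl(\sqrt{\mathscr{W}_X}\,\bm{1}\bigr)=\bigl(\mathcal{L}_X\sqrt{\mathscr{W}_X}\bigr)\bm{1}=\bigl(\sqrt{\mathscr{W}_X}\,\bm{L}_X\bigr)\bm{1}=\sqrt{\mathscr{W}_X}\,\bm{0}=\bm{0},
\end{equation*}
which is the first assertion. One could equally verify this identity directly against $\mathcal{L}_X$: the $u$-th entry is $\sqrt{w_u}\,\bm{D}_X(u,u)-\sum_{v}\mathcal{W}_X(u,v)\sqrt{w_v}$, and the same reindexing gives $\sum_{v}\mathcal{W}_X(u,v)\sqrt{w_v}=\sqrt{w_u}\,\bm{D}_X(u,u)$.

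For the rank bound it suffices that $\sqrt{\mathscr{W}_X}\,\bm{1}$ be a \emph{nonzero} kernel vector. This holds as soon as some weight $w_v$ is nonzero, in which case $\dim\ker\mathcal{L}_X\geq 1$ and therefore $\rank\mathcal{L}_X\leq\#V(X)-1$. The only degenerate situation is when every $w_v=0$; then $\bm{D}_X$ and $\mathcal{W}_X$ both vanish, so $\mathcal{L}_X=\bm{0}$ has rank $0\leq\#V(X)-1$, and the inequality still holds (using $\#V(X)\geq 1$). I do not anticipate a genuine obstacle here: the only delicate point is the bookkeeping in the row-sum cancellation, which is precisely the identity that forces the degree matrix to be defined by $\bm{D}_X(u,u)=\sum_{e\colon o(e)=u}w_{t(e)}$, together with the trivial check that the exhibited kernel vector is nonzero away from the all-zero-weight case.
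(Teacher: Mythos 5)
Your proof is correct and takes essentially the same route as the paper's: both establish $\bm{L}_X\bm{1}=\bm{0}$ by the row-sum reindexing and then transport this to $\mathcal{L}_X$ via the intertwining relation \eqref{eq_1}. The only addition on your side is the explicit treatment of the rank bound in the degenerate case where every weight vanishes (so that $\sqrt{\mathscr{W}_X}\bm{1}=\bm{0}$ is not a usable kernel vector and one instead notes $\mathcal{L}_X=\bm{0}$), a point the paper's proof passes over in silence.
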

\begin{proof}
By \eqref{eq_1}, 
$\mathcal{L}_X\left(\sqrt{\mathscr{W}_X}\mathbfit{1}\right)=\sqrt{\mathscr{W}_X}(\mathbfit{L}_X\mathbfit{1})$. For each $u \in V(X)$,
\[
(\mathbfit{L}_X\mathbfit{1})(u)=\sum_{\substack{e \in \mathbb{E}(X)\colon \\ o(e)=u}}w_{t(e)}-\sum_{v \in V(G)}\sum_{\substack{e \in \mathbb{E}(X)\colon \\ o(e)=u, \\ t(e)=v}}w_v=0.
\]
This implies that $\mathbfit{L}_X\mathbfit{1}$ is the zero vector.
Hence $\mathcal{L}_X\left(\sqrt{\mathscr{W}_X}\mathbfit{1}\right)=\sqrt{\mathscr{W}_X}\mathbfit{0}=\mathbfit{0}$.
\end{proof}
An important property of $\mathcal{L}_X$ is that it is the product of the weighted boundary operator matrix and the weighted coboundary operator matrix.
\begin{definition}
Let $K$ be a field, and let $X$ be a $K$-valued vertex-weighted finite graph.
Choose a section $S \to \mathbb{E}(X)$ of the natural map $\mathbb{E}(X) \to E(X)$.
The \emph{weighted boundary operator matrix \glssymbol{bfB}$\mathbfit{B}_X$} of $X$ is a matrix whose rows are labeled by the vertices of $X$, whose columns are labeled by the edges in $S$, and whose $(u,e)$-entry is given by
\begin{equation}
\mathbfit{B}_X(u,e)=
\begin{cases}
\phantom{-}\sqrt{w_{t(e)}}& \text{if $o(e)\neq t(e)$ and $o(e)=u$}, \\
-\sqrt{w_{o(e)}}& \text{if $o(e)\neq t(e)$ and $t(e)=u$}, \\
\hfill 0 \hfill\hfill& \text{otherwise}.
\end{cases}
\end{equation}
$(\mathbfit{B}_X)^{\mathsf{T}}$ is called the \emph{weighted coboundary operator matrix of $X$}.
\end{definition}
\begin{remark}
Let $C_0(X, K)$ denote the group of weighted 0-chains (the free $K$-vector space over $V(X)$), and let $C_1(X, K)$ denote the group of weighted 1-chains (the free $K$-vector space over $S$).
Then $\mathbfit{B}_X$ is a weighted boundary operator from $C_1$ to $C_0$, and $(\mathbfit{B}_X)^{\mathsf{T}}$ is a weighted coboundary operator from $C_0$ to $C_1$.
\end{remark}
\begin{proposition}
Let $X$ be a $K$-valued vertex-weighted finite graph.
Then
\begin{equation}\label{eq_2}
\mathcal{L}_X=\mathbfit{B}_X(\mathbfit{B}_X)^{\mathsf{T}}.
\end{equation}
Note that \eqref{eq_2} does not depend on the choice of sections of $\mathbb{E}(X) \to E(X)$.
\end{proposition}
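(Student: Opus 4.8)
The plan is to prove the identity entrywise, comparing
$\bigl(\bm{B}_X(\bm{B}_X)^{\mathsf{T}}\bigr)(u,v)=\sum_{e\in S}\bm{B}_X(u,e)\bm{B}_X(v,e)$
with $\mathcal{L}_X(u,v)=\bm{D}_X(u,v)-\mathcal{W}_X(u,v)$, splitting into the off-diagonal and diagonal cases and keeping careful track of loops and of the two orientations of each edge.

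First I would treat the off-diagonal case $u\neq v$. A summand $\bm{B}_X(u,e)\bm{B}_X(v,e)$ is nonzero only when $e\in S$ is a non-loop joining $u$ and $v$. For such an $e$, whichever orientation it carries in $S$, one endpoint-row receives $\pm\sqrt{w_{t(e)}}$ and the other $\mp\sqrt{w_{o(e)}}$; multiplying them out yields $-\sqrt{w_u}\sqrt{w_v}$ in both cases. There are exactly as many edges of $S$ joining $u$ and $v$ as there are directed edges from $u$ to $v$, so summing reproduces $-\mathcal{W}_X(u,v)$, which equals $\mathcal{L}_X(u,v)$ because $\bm{D}_X$ is diagonal. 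For the diagonal entry I would sum $\bm{B}_X(u,e)^2$ over $e\in S$: loops contribute $0$, and each non-loop edge at $u$ contributes $(\pm\sqrt{w_x})^2=w_x$, where $x$ is the other endpoint, giving $\sum_{e\colon o(e)=u,\,t(e)\neq u}w_{t(e)}$. On the other side, in $\mathcal{L}_X(u,u)=\bm{D}_X(u,u)-\mathcal{W}_X(u,u)$ the loop contributions to $\bm{D}_X(u,u)$ (each directed loop contributes $w_u$) cancel exactly against $\mathcal{W}_X(u,u)=\sum_{e\colon o(e)=t(e)=u}\sqrt{w_u}\sqrt{w_u}$, leaving precisely $\sum_{e\colon o(e)=u,\,t(e)\neq u}w_{t(e)}$. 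The two expressions agree, establishing $\mathcal{L}_X=\bm{B}_X(\bm{B}_X)^{\mathsf{T}}$.

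The step I expect to require the most care is exactly this bookkeeping around loops and the two orientations, since that is where the claimed independence from the section $S$ resides. I would finish by recording this independence directly: replacing a chosen representative $e$ by $\bar{e}$ merely negates the corresponding column of $\bm{B}_X$, because the roles of $\sqrt{w_{o(e)}}$ and $\sqrt{w_{t(e)}}$ are swapped together with a sign. Since $\bm{B}_X(\bm{B}_X)^{\mathsf{T}}$ is the sum of the outer products $c\,c^{\mathsf{T}}$ of the columns $c$ with themselves, the replacement $c\mapsto -c$ leaves each such summand unchanged. Hence the product does not depend on the choice of section, which completes the proof.
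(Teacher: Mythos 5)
Your proof is correct and follows essentially the same route as the paper: an entrywise comparison of $\bm{B}_X(\bm{B}_X)^{\mathsf{T}}$ with $\bm{D}_X-\mathcal{W}_X$, split into diagonal and off-diagonal entries, with the same bookkeeping that loops contribute zero to the product while cancelling between $\bm{D}_X(u,u)$ and $\mathcal{W}_X(u,u)$, and that the two orientations of each edge in the section both yield $-\sqrt{w_u}\sqrt{w_v}$. Your closing column-negation argument for independence of the section is a nice explicit touch (the paper simply notes this, as it follows from the identity itself since $\mathcal{L}_X$ involves no choice of $S$), but it does not change the substance of the argument.
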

\begin{proof}
We calculate the product $\mathbfit{B}_X(\mathbfit{B}_X)^{\mathsf{T}}$.
For a diagonal entry,
\begin{align*}
\mathbfit{B}_X(\mathbfit{B}_X)^{\mathsf{T}}(u,u)
&= \sum_{e \in S}\mathbfit{B}_X(u, e)^2 \\
&=\sum_{\substack{e \in S\colon \\ o(e)\neq t(e), \\ o(e)=u}}w_{t(e)} + \sum_{\substack{e \in S\colon \\ o(e)\neq t(e), \\ t(e)=u}}w_{o(e)} \\
&=\sum_{\substack{e \in S\colon \\ o(e)\neq t(e), \\ o(e)=u}}w_{t(e)} + \sum_{\substack{e \in S\colon \\ o(e)\neq t(e), \\ o(\overbar{e})=u}}w_{t(\overbar{e})} \\
&=\sum_{\substack{e \in \mathbb{E}(X)\colon \\ o(e) \neq t(e), \\ o(e)=u}} w_{t(e)}\\
&=\mathbfit{D}_X(u,u)-\mathcal{W}_X(u,u).
\end{align*}
For a non-diagonal entry, 
\begin{align*}
\mathbfit{B}_X(\mathbfit{B}_X)^{\mathsf{T}}(u,v)
&=\sum_{e \in S}\mathbfit{B}_X(u,e)\mathbfit{B}_X(v, e)\\
&=\sum_{\substack{e \in S\colon \\ o(e)=u, \\ t(e)=v}}\left(-\sqrt{w_u}\sqrt{w_v}\right) + \sum_{\substack{e \in S\colon \\ o(\overbar{e})=u, \\ t(\overbar{e})=v}}\left(-\sqrt{w_v}\sqrt{w_u}\right)\\
&= -\sum_{\substack{e \in \mathbb{E}(X)\colon \\ o(e)=u, \\ t(e)=v}}\sqrt{w_u}\sqrt{w_v}\\
&=-\mathcal{W}_X(u,v).
\end{align*}
The above calculations imply that $\mathbfit{B}_X(\mathbfit{B}_X)^{\mathsf{T}}=\mathcal{L}_X$, as desired.
\end{proof}
The equality \eqref{eq_2} allows us to extract the rooted weighted complexity of $X$ from $\mathbfit{B}_X$.
We recall the definitions of weighted complexities.
\begin{definitions}
Let $X$ be a $K$-valued vertex-weighted connected finite graph.
\begin{enumerate}[label=(\roman*)]
\item
For each $v \in V(X)$ and for each spanning tree $T$ of $X$, the \emph{weight \glssymbol{omegaTv}$w(T_v)$ of $T_v$} is the product 
\begin{equation}\label{eq_19}
w(T_v)=\prod_{e \in \mathbb{E}(T_v)}w_{t(e)}.
\end{equation}
\item
For each $v \in V(X)$, the \emph{rooted weighted complexity \glssymbol{kappavX}$\kappa_v(X)$} is the sum 
\begin{equation}
\kappa_v(X)=\sum_{\substack{T\colon \\ \text{$T$ is a spanning tree}}}w(T_v).
\end{equation}
\item The \emph{weighted complexity \glssymbol{kappaX}$\kappa(X)$} is the sum
\begin{equation}
\kappa(X)=\sum_{v \in V(X)}\kappa_v(X).
\end{equation}
\end{enumerate}
\end{definitions}
For each $v \in V(X)$, $\kappa_v(X)$ can be represented as a minor determinant of $\mathbfit{B}_X$.
\begin{proposition}\label{prop_2}
Let $n$ be a positive integer, let $X$ be a $K$-valued vertex-weighted connected finite graph with $n$ vertices, let $v \in V(X)$, and let $A$ be a subset of $S$ consisting of $n-1$ elements.
Consider the square submatrix $\mathbfit{B}_X[\left\{v\right\}, S \setminus A]$ of $\mathbfit{B}_X$ obtained by deleting $v$-th row and the columns not belonging to $A$.
Let \glssymbol{tildeA}$\widetilde{A}$ denote the symmetric directed subgraph of $X$ naturally derived from $A$.
When $A$ is empty, we consider $\widetilde{A}$ as the graph that consists of only $v$.
Then the following hold.
\begin{enumerate}[label=$(\arabic*)$]
\item
If $\widetilde{A}$ is not a tree, then $\det\left(\mathbfit{B}_X[\left\{v\right\},S \setminus A]\right)=0$.
\item
If $\widetilde{A}$ is a tree, then 
$\det\left(\mathbfit{B}_X[\left\{v\right\},S \setminus A]\right)$ is a square root of $w\Bigl(\bigl(\widetilde{A}\bigr)_v\Bigr)$.
\end{enumerate}
\end{proposition}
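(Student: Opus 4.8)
The plan is to extract everything from the combinatorial structure of the single matrix $\bm{B}_X$, handling the two cases by complementary devices: a linear-dependence argument for $(1)$ and a leaf-peeling induction for $(2)$. Recall that each column of $\bm{B}_X$ indexed by a non-loop edge $e$ has exactly two nonzero entries, namely $\sqrt{w_{t(e)}}$ in row $o(e)$ and $-\sqrt{w_{o(e)}}$ in row $t(e)$, while a loop contributes a zero column.

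For $(1)$, I would first observe that $\widetilde{A}$ has $\#V(X)=n$ vertices and exactly $n-1$ edges, so it is a tree if and only if it is a forest; hence if $\widetilde{A}$ is not a tree it must contain a cycle $C=(e_1,\dots,e_l)$ in the sense of the paper, with distinct vertices $u_1,\dots,u_l$. I would then prove that the columns of $\bm{B}_X$ indexed by $e_1,\dots,e_l$ are linearly dependent over $K'$; deleting the row labelled $v$ preserves any such relation, and these columns occur among the columns of $\bm{B}_X[\{v\},S\setminus A]$, so the determinant vanishes. Since every $e_i$ meets only the vertices $u_1,\dots,u_l$, these columns are supported on those $l$ rows, and it suffices to show that the $l\times l$ submatrix $N$ of $\bm{B}_X$ on the cycle's vertices and edges is singular. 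Orienting $C$ cyclically as $u_1\to\cdots\to u_l\to u_1$ and absorbing the signs coming from the section $S$, the $i$-th column of $N$ has entry $\sqrt{w_{u_{i+1}}}$ in row $u_i$ and $-\sqrt{w_{u_i}}$ in row $u_{i+1}$ (indices mod $l$). In the Leibniz expansion of $\det N$ only two permutations survive: the identity, contributing $\prod_{j}\sqrt{w_{u_j}}$ with sign $+1$, and the full $l$-cycle, contributing $(-1)^{l}\prod_{j}\sqrt{w_{u_j}}$ with sign $(-1)^{l-1}$; these cancel, so $\det N=0$ for every field $K$ and every choice of weights.

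For $(2)$, suppose $\widetilde{A}$ is a tree; having $n-1$ edges it is spanning, and I would induct on $n$ by peeling leaves. If $n=1$ then $A=\emptyset$, the matrix is the empty $0\times 0$ matrix of determinant $1$, and $w((\widetilde{A})_v)=1$ as an empty product in \eqref{eq_19}, settling the base case. For $n\ge 2$ pick a leaf $u\ne v$ of $\widetilde{A}$ — one exists because a tree on at least two vertices has at least two leaves — and let $e\in A$ be its unique incident edge, with the other endpoint $t'$ as its parent. Then the row of $\bm{B}_X[\{v\},S\setminus A]$ labelled $u$ has a single nonzero entry $\pm\sqrt{w_{t'}}$, in column $e$, so cofactor expansion along this row gives $\det\bigl(\bm{B}_X[\{v\},S\setminus A]\bigr)=\pm\sqrt{w_{t'}}\,\det M$, where $M$ is the analogous submatrix for the tree obtained by deleting $u$ and $e$, rooted again at $v$. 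By the induction hypothesis $(\det M)^2=w\bigl((\widetilde{A}-u)_v\bigr)$, and since removing a leaf leaves all other parent edges unchanged, \eqref{eq_19} gives $w((\widetilde{A})_v)=w_{t'}\,w((\widetilde{A}-u)_v)$; hence $\det\bigl(\bm{B}_X[\{v\},S\setminus A]\bigr)^2=w((\widetilde{A})_v)$, as required. Signs are immaterial throughout, since the assertion only concerns a square root.

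The step I expect to be most delicate is the linear dependence in $(1)$ over an arbitrary field. When all vertex weights are nonzero one can write down an explicit telescoping relation among the cycle columns, but those coefficients can collapse simultaneously to zero once two cycle vertices carry weight $0$, so the naive relation is useless in general. Computing $\det N$ directly and exhibiting the cancellation of the two surviving permutations avoids any division and is exactly what makes the argument uniform over all $K$-valued weights, matching the generality claimed for the vertex-weighted matrix-tree theorem.
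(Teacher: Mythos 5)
Your proof is correct, and at both decisive points it takes a route different from the paper's, in each case a bit more robust. For part (2), the paper's induction expands along the \emph{column} of an edge $e\in A$ incident to the root $v$: deleting $e$ splits $\widetilde{A}$ into subtrees $T_1\ni u$ and $T_2\ni v$, the complementary minor is the direct sum of the matrices for $T_1$ (rooted at $u$) and $T_2$ (rooted at $v$), and the inductive hypothesis is applied to both blocks. Your leaf-peeling instead expands along the \emph{row} of a leaf $u\neq v$ and recurses only once, with the correct bookkeeping $w\bigl((\widetilde{A})_v\bigr)=w_{t'}\,w\bigl((\widetilde{A}-u)_v\bigr)$; this avoids the block-sum identification altogether, and it also sidesteps a slip in the paper's displayed computation, where the entry of column $e$ surviving after row $v$ is deleted is written as $(\pm1)\sqrt{w_u}$ although by the definition of $\bm{B}_X$ it equals $\pm\sqrt{w_v}$ (consistently, $w((\widetilde{A})_v)=w_v\,w((T_1)_u)\,w((T_2)_v)$; the paper's final conclusion is unaffected). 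For part (1), the paper only asserts that the columns indexed by the edges of a cycle are linearly dependent. Your Leibniz evaluation of the $l\times l$ cycle minor $N$ (exactly two permutations contribute, the identity and the full $l$-cycle, and they cancel) is an actual proof of that assertion, and your closing observation is the right justification for proceeding this way: since weights may vanish, the natural telescoping relation with coefficients $\prod_{j\notin\{i,i+1\}}\sqrt{w_{u_j}}$ can be identically zero (already when two non-adjacent cycle vertices have weight $0$), so a division-free argument is genuinely needed at this level of generality; alternatively, one could note that the row vector $\bigl(\sqrt{w_{u_1}},\dotsc,\sqrt{w_{u_l}}\bigr)$ annihilates $N$, and that $N=\bm{O}$ when all cycle weights vanish. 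You also make explicit the counting step ($n$ vertices and $n-1$ edges, so a non-tree $\widetilde{A}$ must contain a cycle) that the paper leaves tacit.
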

\begin{proof}[Proof of $(1)$]
If $\widetilde{A}$ is not a tree, then $\widetilde{A}$ contains a cycle graph $C$.
Hence the columns correspond to the edges in $\mathbb{E}(C) \cap S$ are linearly dependent, and hence $\det\left(\mathbfit{B}_X[\left\{v\right\},S \setminus A]\right)=0$.
\end{proof}
\begin{proof}[Proof of $(2)$]
We prove by induction on $n$.
When $n=1$, $\mathbfit{B}_X[\left\{v\right\}, S\setminus A]$ is the 0 by 0 matrix, and hence $\det\left(\mathbfit{B}_X[\left\{v\right\}, S \setminus A]\right)=1$.
On the other hand, $\widetilde{A}$ has no edge, and so $w\Bigl(\bigl(\widetilde{A}\bigr)_v\Bigr)=1$.
This means that our statement is vaild for $n=1$.

Suppose $n>1$.
Let $e$ be an edge in $A$ that has $v$ as its endpoint, and let $u$ be another endpoint of $e$.
Since $\widetilde{A}$ is a tree, the symmetric directed graph obtained by deleting $e$ (and $\overbar{e}$) from $\widetilde{A}$ is the disjoint union of two trees $T_1$ and $T_2$.
We may assume that $T_1$ has $u$, and hence $T_2$ has $v$.
Note that $(\mathbfit{B}_X[\{v\},S\setminus A])[\{u\} , \left\{e\right\}]=\mathbfit{B}_{T_1}[\{u\}, (S \setminus A)\cap \mathbb{E}(T_1)]\oplus \mathbfit{B}_{T_2}[\{v\}, (S \setminus A)\cap \mathbb{E}(T_2)]$.
By the Laplace expansion along the $e$-th column of $\mathbfit{B}_X[\left\{v\right\}, S\setminus A]$ and by the inductive hypothesis, we obtain
\begin{align*}
\det\left(\mathbfit{B}_X[\left\{v\right\}, S\setminus A]\right)
&=(\pm 1)\sqrt{w_u}\det\left(\mathbfit{B}_{T_1}[\{u\}, (S \setminus A)\cap \mathbb{E}(T_1)]\right)\det\left(\mathbfit{B}_{T_2}[\{v\}, (S \setminus A)\cap \mathbb{E}(T_2)]\right) \\
&=(\pm 1)\sqrt{w_u}\sqrt{w((T_1)_u)}\sqrt{w((T_2)_v)} \\
&=(\pm 1)\sqrt{w\left(\left(\widetilde{A}\right)_v\right)}.
\end{align*}
This completes the proof.
\end{proof}
\begin{corollary}[vertex-weighted matrix-tree theorem I]\label{coro_1}
Let $X$ be a $K$-valued vertex-weighted connected finite graph.
Then
\begin{equation}\label{eq_7}
\det\left(\mathcal{L}_X[\left\{v\right\},\left\{v\right\}]\right)=\kappa_v(X)
\end{equation}
holds for every $v \in V(X)$.
\end{corollary}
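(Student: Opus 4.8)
The plan is to combine the factorization $\mathcal{L}_X=\bm{B}_X(\bm{B}_X)^{\mathsf{T}}$ from \eqref{eq_2} with the Cauchy--Binet formula, and then read off each resulting minor using Proposition \ref{prop_2}. Write $n=\#V(X)$ and let $\bm{B}_X'$ denote the $(n-1)\times\#S$ matrix obtained from $\bm{B}_X$ by deleting the row labeled by $v$. First I would observe that deleting the $v$-th row and $v$-th column from $\mathcal{L}_X=\bm{B}_X(\bm{B}_X)^{\mathsf{T}}$ amounts to deleting the $v$-th row from both factors, so that
\begin{equation}
\mathcal{L}_X[\{v\},\{v\}]=\bm{B}_X'(\bm{B}_X')^{\mathsf{T}}.
\end{equation}
Since $X$ is connected we have $\#S\geq n-1$, so the matrix $\bm{B}_X'$ has at least as many columns as rows and Cauchy--Binet applies.

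The main step is the application of Cauchy--Binet. Summing over all subsets $A\subseteq S$ with $\#A=n-1$ and writing $\bm{B}_X[\{v\},S\setminus A]$ for the square submatrix keeping exactly the columns indexed by $A$, the second factor contributes the transpose of the first, so each cross term is a perfect square:
\begin{equation}
\det\left(\mathcal{L}_X[\{v\},\{v\}]\right)=\sum_{\substack{A\subseteq S\colon\\ \#A=n-1}}\det\left(\bm{B}_X[\{v\},S\setminus A]\right)\det\left(\left(\bm{B}_X[\{v\},S\setminus A]\right)^{\mathsf{T}}\right)=\sum_{\substack{A\subseteq S\colon\\ \#A=n-1}}\det\left(\bm{B}_X[\{v\},S\setminus A]\right)^2.
\end{equation}
I want to stress that passing to squares is precisely what makes Proposition \ref{prop_2} usable: that proposition determines each minor only up to sign and only up to the choice of square roots, but both ambiguities disappear after squaring.

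Next I would evaluate each summand with Proposition \ref{prop_2}. If $\widetilde{A}$ is not a tree the corresponding minor vanishes, so such $A$ contribute nothing. If $\widetilde{A}$ is a tree, then since $\widetilde{A}$ has $n-1$ edges it must have $n$ vertices, hence is a spanning tree, and $\det\left(\bm{B}_X[\{v\},S\setminus A]\right)^2=w\bigl((\widetilde{A})_v\bigr)$. Finally, the assignment $A\mapsto\widetilde{A}$ sets up a bijection between the $(n-1)$-subsets $A\subseteq S$ with $\widetilde{A}$ a tree and the spanning trees $T$ of $X$, under which $w\bigl((\widetilde{A})_v\bigr)=w(T_v)$. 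Therefore
\begin{equation}
\det\left(\mathcal{L}_X[\{v\},\{v\}]\right)=\sum_{\substack{T\colon\\ \text{$T$ is a spanning tree}}}w(T_v)=\kappa_v(X),
\end{equation}
which is the claimed identity. I expect no serious obstacle here; the only points requiring care are the bookkeeping in the Cauchy--Binet expansion (matching $\bm{B}_X'$ restricted to columns $A$ with the paper's notation $\bm{B}_X[\{v\},S\setminus A]$) and the remark that the edge count forces a tree $\widetilde{A}$ to be spanning.
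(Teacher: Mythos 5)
Your proposal is correct and follows essentially the same route as the paper: factor $\mathcal{L}_X=\bm{B}_X(\bm{B}_X)^{\mathsf{T}}$, delete the $v$-th row/column, apply Cauchy--Binet to get a sum of squared minors over $(n-1)$-subsets $A\subseteq S$, and evaluate each term via Proposition \ref{prop_2}, with subsets whose $\widetilde{A}$ is a tree corresponding exactly to spanning trees. Your added remarks (that squaring kills the sign ambiguity in Proposition \ref{prop_2}(2), and that an $(n-1)$-edge tree is automatically spanning) are details the paper leaves implicit but uses in the same way.
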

\begin{proof}
Using the Cauchy-Binet formula, we have
\begin{align*}
\det\left(\mathcal{L}_X[\left\{v\right\},\left\{v\right\}]\right) 
&=\det\left(\left(\mathbfit{B}_X(\mathbfit{B}_X)^{\mathsf{T}}\right)[\left\{v\right\},\left\{v\right\}]\right) \\
&=\sum_{\substack{A\colon \\ A\subseteq S, \\ \#A = n-1}}
\det\left(\mathbfit{B}_X[\{v\}, S \setminus A]\right)\det\left(\mathbfit{B}_X[\{v\}, S \setminus A]\right)^{\mathsf{T}} \\
&=\sum_{\substack{A\colon \\ A\subseteq S, \\ \#A = n-1, \\ \text{$\widetilde{A}$ is a tree}}} \det\left(\mathbfit{B}_X[\{v\}, S \setminus A]\right)^2 & \text{(Use Proposition\ref{prop_2}(1).)}\\
&=\sum_{\substack{T\colon \\ \text{$T$ is a spanning tree}}} w(T_v) & \text{(Use Proposition\ref{prop_2}(2).)}\\
&=\kappa_v(X),
\end{align*}
which completes the proof.
\end{proof}
\begin{remark}
When each vertex has weight 1, Corollary \ref{coro_1} is nothing but the matrix-tree theorem for undirected graphs.
\end{remark}
Corollary \ref{coro_1} states that the principal minors of a Laplacian matrix are the rooted weighted complexity.
More generally, the next theorem reveals the relation between the first minors of a Laplacian matrix and the weighted complexity.
%%%%%%%%行列・木定理
\begin{theorem}[vertex-weighted matrix-tree theorem II]\label{thm_5}
Let $X$ be a $K$-valued vertex-weighted connected finite graph with $n$ vertices.
Then
\begin{equation}\label{eq_8}
\raisebox{-.75ex}{\bigg(}\sum_{z \in V(X)} w_z\raisebox{-.75ex}{\bigg)}\adj (\mathcal{L}_X)(u, v)
=\sqrt{w_u}\sqrt{ w_v}\kappa(X).
\end{equation}
\end{theorem}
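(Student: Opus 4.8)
The plan is to exploit the fact that $\mathcal{L}_X$ is a \emph{symmetric singular} matrix possessing an explicit kernel vector, which forces its classical adjoint to be a rank-one matrix of the shape $\alpha\,\bm{x}\bm{x}^{\mathsf{T}}$ with $\bm{x}=\sqrt{\mathscr{W}_X}\bm{1}$. The diagonal of this adjoint is already identified by Corollary~\ref{coro_1}, and a single trace computation will pin down the scalar $\alpha$ in terms of $\kappa(X)$; crucially, multiplying through by $\sum_z w_z$ at the very end avoids any division and hence any nondegeneracy hypothesis.

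First I would record the two structural facts about $M:=\mathcal{L}_X$ that are already available. By the corollary following Proposition~\ref{prop_1}, the vector $\bm{x}:=\sqrt{\mathscr{W}_X}\bm{1}$, whose $v$-th entry is $\sqrt{w_v}$, satisfies $M\bm{x}=\bm{0}$ and $\rank M\le n-1$; in particular $\det M=0$. Moreover $M$ is symmetric, so $M\bm{x}=\bm{0}$ also yields $\bm{x}^{\mathsf{T}}M=\bm{0}$. I note that $\bm{x}\bm{x}^{\mathsf{T}}$ is precisely the matrix whose $(u,v)$-entry is $\sqrt{w_u}\sqrt{w_v}$, exactly the shape appearing on the right-hand side of \eqref{eq_8}.

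Next I would prove the linear-algebra lemma that $\adj(M)=\alpha\,\bm{x}\bm{x}^{\mathsf{T}}$ for some scalar $\alpha\in K'$. Since $\det M=0$, the defining relations $M\,\adj(M)=\adj(M)\,M=\det(M)I=O$ show that every column of $\adj(M)$ lies in $\ker M$ and every row lies in the left kernel of $M$. If $\rank M\le n-2$, then every $(n-1)\times(n-1)$ minor of $M$ vanishes, so $\adj(M)=O=0\cdot\bm{x}\bm{x}^{\mathsf{T}}$. If $\rank M=n-1$, then $\bm{x}\neq\bm{0}$ (as $\bm{x}=\bm{0}$ would force $M=O$), so $\ker M=K'\bm{x}$ exactly; each column of $\adj(M)$ is then a multiple of $\bm{x}$, giving $\adj(M)=\bm{x}\,\bm{c}^{\mathsf{T}}$ for some row vector $\bm{c}^{\mathsf{T}}$, and the requirement that each row lie in the left kernel $K'\bm{x}^{\mathsf{T}}$ forces $\bm{c}\in K'\bm{x}$, whence $\adj(M)=\alpha\,\bm{x}\bm{x}^{\mathsf{T}}$. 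In either case $\adj(\mathcal{L}_X)(u,v)=\alpha\sqrt{w_u}\sqrt{w_v}$ for all $u,v$.

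Finally I would determine $\alpha$ by taking traces, the step that removes the nondegeneracy assumption of \cite{WFS11}. On one hand $\operatorname{tr}\bigl(\alpha\,\bm{x}\bm{x}^{\mathsf{T}}\bigr)=\alpha\sum_{z\in V(X)}w_z$; on the other hand, by Corollary~\ref{coro_1} the $(v,v)$-entry of $\adj(\mathcal{L}_X)$ equals $\det\bigl(\mathcal{L}_X[\{v\},\{v\}]\bigr)=\kappa_v(X)$, so $\operatorname{tr}(\adj(\mathcal{L}_X))=\sum_{v}\kappa_v(X)=\kappa(X)$. Equating gives $\bigl(\sum_z w_z\bigr)\alpha=\kappa(X)$, and multiplying $\adj(\mathcal{L}_X)(u,v)=\alpha\sqrt{w_u}\sqrt{w_v}$ by $\sum_z w_z$ produces exactly \eqref{eq_8} without dividing by $\sum_z w_z$, so the case $\sum_z w_z=0$ needs no separate treatment. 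The only delicate point is the rank-one lemma; I expect the main obstacle to be the clean handling of the rank-deficient case $\rank M\le n-2$ together with the verification that symmetry genuinely pins the row space of $\adj(M)$ to $K'\bm{x}^{\mathsf{T}}$, both of which are addressed above, while everything else is bookkeeping.
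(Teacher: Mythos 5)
Your proposal is correct and is essentially the paper's own proof: the paper likewise derives $\adj(\mathcal{L}_X)=\varkappa\,\sqrt{\mathscr{W}_X}\bm{J}\sqrt{\mathscr{W}_X}$ (your $\alpha\,\bm{x}\bm{x}^{\mathsf{T}}$) from the kernel relation plus symmetry of the adjugate, reads off $\kappa_v(X)=\varkappa\, w_v$ from the diagonal entries via Corollary \ref{coro_1}, and sums over $v$ --- exactly your trace computation --- before multiplying through by $\sum_z w_z$ so that no nondegeneracy hypothesis is needed. One small repair: your deduction that $\rank M=n-1$ forces $\bm{x}\neq\bm{0}$ implicitly assumes $n\geq 2$ (for $n=1$ with $w_v=0$ the matrix $M=\bm{O}$ has rank $n-1=0$ while $\adj(M)$ is the $1\times 1$ identity, so the rank-one lemma fails in that degenerate case), which is why the paper disposes of $n=1$ separately as a trivial verification of \eqref{eq_8} --- do the same and your argument is complete.
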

\begin{proof}
We may assume that $n\geq 2$ since \eqref{eq_8} is trivial when $n=1$.
We observe that if $\rank(\mathcal{L}_X) \leq n-2$, then $\det\left(\mathcal{L}_X[\left\{u\right\},\left\{v\right\}]\right)=0$ for every $u, v \in V(X)$, and hence $\kappa_v(X)=0$ for every $v \in V(X)$ by \eqref{eq_7}, so \eqref{eq_8} is also trivial in this case.

Suppose $\rank(\mathcal{L}_X)=n-1$.
Then, $\ker(\mathcal{L}_X)$ is one-dimensional and hence is generated by $\sqrt{\mathscr{W}_X}\mathbfit{1}$, which is a non-zero vector.
Let $\adj (\mathcal{L}_X)$ denote the classical adjoint matrix of $\mathcal{L}_X$.
Then $\mathcal{L}_X\cdot \adj (\mathcal{L}_X)=\det\left(\mathcal{L}_X\right)\mathbfit{I}_n =\mathbfit{O}$.
Hence each column of $\adj(\mathcal{L}_X)$ is a scalar multiple of $\sqrt{\mathscr{W}_X}\mathbfit{1}$.
Moreover, each row $\adj(\mathcal{L}_X)$ is also a scalar multiple of $\sqrt{\mathscr{W}_X}\mathbfit{1}$ since $\adj(\mathcal{L}_X)$ is symmetric.
Therefore,
\begin{equation}\label{eq_9}
\adj(\mathcal{L}_X)=\varkappa\cdot \sqrt{\mathscr{W}_X}\mathbfit{J}\sqrt{\mathscr{W}_X}
\end{equation}
for some $\varkappa \in K$. Here $\mathbfit{J}$ denotes the square matrix of order $n$ all of whose entries are $1$.
$\eqref{eq_9}$ implies that 
$\kappa_v(X)=\det\left(\mathcal{L}_X[\left\{v\right\},\left\{v\right\}]\right)=\varkappa\cdot w_v$
for every $v \in V(X)$.
Hence 
\begin{equation}\label{eq_10}
\kappa(X)=\varkappa \cdot \raisebox{-.75ex}{\bigg(}\sum_{v \in V(X)} w_v\raisebox{-.75ex}{\bigg)}.
\end{equation}
Combining \eqref{eq_9} and \eqref{eq_10}, we obtain 
\begin{equation}
\raisebox{-.75ex}{\bigg(}\sum_{v \in V(X)} w_v\raisebox{-.75ex}{\bigg)}\cdot \adj(\mathcal{L}_X)= \kappa(X) \cdot \sqrt{\mathscr{W}_X}\mathbfit{J}\sqrt{\mathscr{W}_X},
\end{equation}
and the proof is complete.
\end{proof}
\begin{remark}
When a symmetric directed graph $X$ has a weight function $w\colon V(X)\to \mathbb{R}_{>0}$, then $\sum_{z \in V(X)}w_z >0$, and \eqref{eq_10} can be expressed as follows.
\begin{equation}\label{eq_11}
\adj(\mathcal{L}_X)=\frac{\kappa(X)}{\sum_{z \in V(X)}w_z }\cdot \sqrt{\mathscr{W}_X}\mathbfit{J}\sqrt{\mathscr{W}_X}.
\end{equation}
\eqref{eq_11} coincides with Theorem 1 in \cite{MR1401006}, so \eqref{eq_8} is a direct generalization of their result.
\end{remark}
\begin{corollary}\label{coro_2}
If $\sum_{z \in V(X)}w_z =0$, then $\kappa(X)=0$.
\end{corollary}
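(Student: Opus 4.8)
The plan is to read the statement off the identity \eqref{eq_10} that already appears inside the proof of Theorem \ref{thm_5}, after reinstating the same dichotomy on the rank of $\mathcal{L}_X$. Write $n=\#V(X)$. If $\rank(\mathcal{L}_X)\le n-2$, then every principal submatrix $\mathcal{L}_X[\{v\},\{v\}]$ is singular, so by Corollary \ref{coro_1} we have $\kappa_v(X)=\det\bigl(\mathcal{L}_X[\{v\},\{v\}]\bigr)=0$ for every $v\in V(X)$, whence $\kappa(X)=\sum_{v\in V(X)}\kappa_v(X)=0$; note that this case needs no hypothesis on the weights. If instead $\rank(\mathcal{L}_X)=n-1$, then the computation carried out in the proof of Theorem \ref{thm_5} produced a scalar $\varkappa$ for which $\kappa(X)=\varkappa\cdot\bigl(\sum_{z\in V(X)}w_z\bigr)$, which is exactly \eqref{eq_10}. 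Substituting the hypothesis $\sum_{z\in V(X)}w_z=0$ then forces $\kappa(X)=0$. Combining the two cases yields the claim.

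An alternative, slightly more self-contained route argues straight from the statement \eqref{eq_8} of Theorem \ref{thm_5}: the vanishing of $\sum_{z\in V(X)}w_z$ kills the left-hand side, so $\sqrt{w_u}\sqrt{w_v}\,\kappa(X)=0$ for all $u,v\in V(X)$. If some weight $w_u$ is nonzero, then $\sqrt{w_u}\neq 0$ and taking $u=v$ gives $\kappa(X)=0$ at once. The only configuration this last route leaves open is the one in which \emph{every} $w_v=0$; there one simply reads $\kappa(X)=0$ off the definition, since for $n\ge 2$ each spanning tree contributes a weight $w(T_v)=\prod_{e\in\mathbb{E}(T_v)}w_{t(e)}$ containing at least one vanishing factor.

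I expect essentially no genuine obstacle here: the corollary is a direct consequence of the preceding theorem. The single point requiring a little care is precisely the degenerate configuration in which all vertex weights vanish, and this is exactly where routing through the rank dichotomy is cleanest, since \eqref{eq_10} delivers $\kappa(X)=0$ uniformly (under the standing assumption $n\ge 2$ inherited from the proof of Theorem \ref{thm_5}), sidestepping the need to track the $\sqrt{w_u}\sqrt{w_v}$ factor by hand.
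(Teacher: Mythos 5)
Your proposal is correct, and your second (``alternative'') route is precisely the paper's own proof: it splits into the case where some weight is nonzero, where \eqref{eq_8} with $u=v$ immediately forces $\kappa(X)=0$, and the all-weights-zero case, which is read off the definition \eqref{eq_19}. Your primary route via the rank dichotomy and \eqref{eq_10} from inside the proof of Theorem \ref{thm_5} is only a cosmetic repackaging of the same ingredients, so the two arguments are essentially identical.
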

\begin{proof}
When all weights are zero, $\kappa(X)=0$ by \eqref{eq_19}.
If there is a vertex $v$ whose weight is non-zero, then, by \eqref{eq_8},
\[
\kappa(X) = \frac{1}{w_v} \raisebox{-.75ex}{\bigg(}\sum_{z \in V(X)}w_z\raisebox{-.75ex}{\bigg)}\det\left(\mathcal{L}_X[\{v\}, \{v\}]\right)=0,
\]
and our statement follows.
\end{proof}
\subsection{Voltage assignments}\label{subsec_2.2}
In this subsection, we deal with the Galois theory for vertex-weighted graphs.
We first recall the concept of covers of graphs.
\begin{definitions}
Let $X$ and $Y$ be $K$-valued vertex-weighted graphs.
\begin{enumerate}[label=(\roman*)]
\item
A \emph{morphism $f$ from $Y$ to $X$} is a pair $\left(f_V, f_{\mathbb{E}}\right)$, where $f_V$ is a map from $V(Y)$ to $V(X)$ and $f_{\mathbb{E}}$ is a map from $\mathbb{E}(Y)$ to $\mathbb{E}(X)$ satisfying the following three conditions.
\begin{enumerate}[label=$(\alph*)$]
\item
$f_V(o(e))=o(f_{\mathbb{E}}(e))$, and $f_V(t(e))=t(f_{\mathbb{E}}(e))$ for every $e\in \mathbb{E}(Y)$.
\item $(f_{\mathbb{E}}(e))\overbar{\hphantom{I}}=f_{\mathbb{E}}(\overbar{e})$ for every $e \in \mathbb{E}(Y)$.
\item $w_{f_V(v)}=w_v$ for every $v \in V(Y)$.
\end{enumerate}
\item Under the additonal assumption $X$ and $Y$ are both connected, a (\emph{unramified}) \emph{cover} is a morphism $\pi\colon Y \to X$ satisfying the following two conditions.
\begin{enumerate}[label=$(\alph*)$]\setcounter{enumii}{3}
\item
$\pi_V$ and $\pi_{\mathbb{E}}$ are surjective.
\item For each $v \in V(Y)$, the restriction 
\[
\pi_{\mathbb{E}}|_{\mathbb{E}(Y)_v} \colon \mathbb{E}(Y)_v \to \mathbb{E}(X)_{\pi_V(v)}
\]
is bijective, where $\mathbb{E}(Y)_v=\{\,e \in \mathbb{E}(Y)\;|\; o(e)=v\,\}$.
\end{enumerate}
We write $Y/X$ for a cover $\pi\colon Y \to X$ as usual.
\item 
For a cover $Y/X = \pi$, an \emph{automorphism of $Y/X$} is an automorphism $f\colon Y\similarrightarrow Y$ such that $\pi \circ f = \pi$.
We write $\Aut(Y/X)$ for the set of all automorphisms of $Y/X$.
\item 
A cover $Y/X=\pi$ is said to be \emph{Galois} if $\#\Aut(Y/X)=\#\pi_V^{-1}(v)$ for each $v \in V(X)$. For a Galois cover $Y/X$, $\Gal(Y/X)$ stands for $\Aut(Y/X)$.
\end{enumerate}
\end{definitions}

\begin{definition}
\begin{enumerate}[label=(\roman*)]
Let $X$ be a $K$-valued vertex-weighted graph.
\item
For a group $G$, a map $\alpha\colon \mathbb{E}(X)\to G$ is called a \emph{voltage assignment} if $\alpha(\overbar{e})=\alpha(e)^{-1}$ for each $e \in \mathbb{E}(X)$.
\item
For a voltage assignment $\alpha \colon \mathbb{E}(X) \to G$,
set $V(X(\alpha))=V(X) \times G$, set $\mathbb{E}(X(\alpha))=\mathbb{E}(X) \times G$, and let the map $\inc_{X(\alpha)}\colon \mathbb{E}(X(\alpha))\to V(X(\alpha)) \times V(X(\alpha))$ be given  by setting $o((e, \sigma))=(o(e), \sigma)$, $t((e, \sigma))=(o(e), \sigma\alpha(e))$ for each $(e, \sigma) \in \mathbb{E}(X(\alpha))$.
Then the triple $(X, G)$$=$$\bigl(V(X(\alpha)),$ $\mathbb{E}(X(\alpha)),$ $\inc_{X(\alpha)}\bigr)$ is a symmetric directed graph with the map $(e, \sigma)\mapsto (e, \sigma)^{\overbar{}}=(\overbar{e}, \sigma\alpha(e))$.
Moreover, $X(\alpha)$  has the natural vertex-weighted structure given by $w_{(v,\sigma)} = w_v$.
The $K$-valued vertex-weighted graph \glssymbol{Xalpha}$X(\alpha)$ is called the \emph{derived graph of $\alpha$}.
\end{enumerate}
\end{definition}
Due to the following facts according to \cite{Gon21}, we would rather consider voltage assignments and derived graphs than general covers.
The natural cover $\pi\colon X(G,{\alpha})\to X$ defined by $\pi_V(v,\sigma)=v$ and $\pi_{\mathbb{E}}(e,\sigma)=e$ is a Galois cover whose automorphism group is isomorphic to $G$ if $X(\alpha)$ is connected.
Conversely, every finite Galois cover $Y/X$ is isomorphic to a cover induced by a voltage assignment whose target is $\Gal(Y/X)$. 
The above facts are also valid for $K$-valued vertex-weighted graphs.
In particular, \cite[Theorem 4]{Gon21} gives us a characterization of the condition in which $X(G,{\alpha})$ is connected.
\begin{example}\label{ex_4}
Consider the vertex-weighted graph $X$ and voltage assigmnment $\alpha\colon \mathbb{E}(X) \to \mathbb{Z}/2\mathbb{Z}$ in Figure \ref{fig_100}.
\begin{figure}
\centering
\begin{minipage}{8em}
\centering
\begin{tikzpicture}[scale=1]
\coordinate[label=below:$v_1$,draw, circle, inner sep=2pt, fill=black](v1)at(0,0);
\coordinate[label=above left:$v_2$,draw, circle, inner sep=2pt, fill=black](v2)at({2*cos(pi/3 r)}, {2*sin(pi/3 r)});
\coordinate[label=below:$v_3$,draw, circle, inner sep=2pt, fill=black](v3)at(2,0);

\draw(v1)[->, very thick] to  node[above left]{$e_1$} (v2);
\draw(v3)[->, very thick] to  node[below]{$e_3$} (v1);
\draw(v2)[->, very thick]  to node[right]{$e_2$} (v3);
\draw(v1)[->, very thick]to[out=210, in=510, loop] node[left]{$e_4$} ();
\end{tikzpicture}
\end{minipage}
\hspace{3em}
\begin{minipage}{12em}
The weights of $X$ is given by
\begin{flushleft}
$
\begin{cases}
w_{v_1}=\sqrt{2},\\
w_{v_2}=1,\\
w_{v_3}=1.
\end{cases}
$
\end{flushleft}
\phantom{$($}
\end{minipage}
\begin{minipage}{0.03\columnwidth}
\phantom{a}
\end{minipage}
\begin{minipage}{11em}
$\alpha \colon \mathbb{E}(X)\to \mathbb{Z}_2$ is given by
\begin{flushleft}
$
\begin{cases}
\alpha(e_1)=1, \\\alpha(e_2)=0, \\
\alpha(e_3)=0, \\\alpha(e_4))=1.
\end{cases}
$
\end{flushleft}
\end{minipage}
\caption{The vertex-weighted graph $X$ and the voltage assignment $\alpha$ in Example \ref{ex_4}}\label{fig_100}
\end{figure}
Then Figure \ref{fig_4} illustlates $X(\alpha)$.
Under the natural cover morphism, 
vertices $u_i$ and $u'_i$ map to $v_i$ for each $i \in \{1,2,3\}$, and edges $f_i$ and $f'_i$ map to $e_i$ for each $i \in \{1,2,3,4\}$.
\begin{figure}
\begin{tikzpicture}[scale=1.1]
\coordinate[label=below left:${u_1}$,draw, circle, inner sep=1pt, fill=black](v10)at({cos(7*pi/6 r)}, {sin(7*pi/6 r)});
\coordinate[label=above:${u_2}$,draw, circle, inner sep=1pt, fill=black](v20)at({cos(pi/2 r)}, {sin(pi/2 r)});
\coordinate[label=below right:${u_3}$,draw, circle, inner sep=1pt, fill=black](v30)at({cos(11*pi/6 r)}, {sin(11*pi/6 r)});
\coordinate[label=below left:$w'_1$,draw, circle, inner sep=1pt, fill=black](v11)at({2*cos(7*pi/6 r)}, {2*sin(7*pi/6 r)});
\coordinate[label=above:$w'_2$,draw, circle, inner sep=1pt, fill=black](v21)at({2*cos(pi/2 r)}, {2*sin(pi/2 r)});
\coordinate[label=below right:$w'_3$,draw, circle, inner sep=1pt, fill=black](v31)at({2*cos(11*pi/6 r)}, {2*sin(11*pi/6 r)});

\draw(v10)[->, thick] to  node[above left]{$f_1$} (v21);
\draw(v30)[->, thick] to  node[below]{$f_3$} (v10);
\draw(v20)[->, thick]  to node[right]{$f_2$} (v30);
\draw(v11)[->, thick] to  node[above left]{$f'_1$} (v20);
\draw(v31)[->, thick] to  node[below]{$f'_3$} (v11);
\draw(v21)[->, thick]  to node[right]{$f'_2$} (v31);
\draw(v10)[->, thick]to[bend left=90] node[below]{$f_4$} (v11);
\draw(v10)[->, thick]to[bend right=90] node[above left]{$f'_4$} (v11);
\end{tikzpicture}
\caption{the derived graph $X(\alpha)$ in in Example \ref{ex_4}}\label{fig_4}
\end{figure}
\end{example}
A first glimpse of relations between $X(\alpha)$ and $X$ is a relation of the weighted matrices.
\begin{definition}
Let $X$ be a $K$-valued vertex-weighted finite graph, let $G$ be a finite group, and let $\alpha\colon \mathbb{E}(X) \to G$ be a voltage assignment.
Then, for $\sigma \in G$, we write \glssymbol{calWXalphasigma}$\mathcal{W}_X^{\alpha, \sigma}$ for the square matrix labeled by the vertices of $X$ and whose $(u,v)$-entry is given by 
\begin{equation}
\mathcal{W}_X^{\alpha, \sigma}(u,v)=\sum_{\substack{e \in \mathbb{E}(X)\colon \alpha(e)=\sigma, \\ o(e)=u,\ t(e)=v}} 
\sqrt{w_u}\sqrt{w_v}.
\end{equation}
Note that $\mathcal{W}_X=\sum_{\sigma \in G}\mathcal{W}_X^{\alpha, \sigma}$.
\end{definition}
\begin{proposition}\label{prop_3}
Let $X$ be a $K$-valued vertex-weighted finite graph, let $G$ be a finite group, and let $\alpha\colon \mathbb{E}(X) \to G$ be a voltage assignment.
Then 
\begin{equation}\label{eq_12}
\mathbfit{D}_{X(\alpha)}=\mathbfit{D}_X\otimes \mathbfit{I}_G,
\end{equation}
where $\otimes$ stands for the Kronecker products for matrices and where $\mathbfit{I}_G$ is the identity matrix labeled by $G$.
Moreover, 
we consider the right regular representation $\sigma$ of $G$ as a map $G \to \mathrm{Mat}_{G \times G}(K)$, where $\mathrm{Mat}_{G \times G}(K)$ stands for the set of square matrices over $K$ labeled by $G$.
Then 
\begin{equation}\label{eq_13}
\mathcal{W}_Y=\sum_{\sigma  \in G}\mathcal{W}_X^{\alpha, \sigma}\otimes \rho(\sigma).
\end{equation}
\end{proposition}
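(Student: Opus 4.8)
The plan is to prove both identities by a direct comparison of matrix entries, indexing the rows and columns of every matrix over $X(\alpha)$ by the pairs in $V(X(\alpha)) = V(X) \times G$. Throughout I use the convention that, for a matrix $M$ labeled by $V(X)$ and a matrix $N$ labeled by $G$, the Kronecker product satisfies $(M \otimes N)((u,\sigma),(v,\tau)) = M(u,v)\,N(\sigma,\tau)$, and I take the right regular representation to be the permutation matrix $\rho(s)$ with $\rho(s)(\sigma,\tau) = 1$ if $\tau = \sigma s$ and $0$ otherwise. A one-line computation, $(\rho(s)\rho(t))(\sigma,\upsilon)=\sum_\tau \rho(s)(\sigma,\tau)\rho(t)(\tau,\upsilon)$, selects $\tau=\sigma s$ and then forces $\upsilon=\sigma s t$, showing $\rho(s)\rho(t)=\rho(st)$, so this is indeed a representation.

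For \eqref{eq_12} I would first note that the edges of $X(\alpha)$ emanating from $(u,\sigma)$ are precisely the pairs $(e,\sigma)$ with $o(e)=u$, and that such an edge has terminus $(t(e),\sigma\alpha(e))$, whose vertex weight is $w_{t(e)}$ by the weight structure on $X(\alpha)$. Summing over these edges gives
\[
\bm{D}_{X(\alpha)}\bigl((u,\sigma),(u,\sigma)\bigr)=\sum_{\substack{e\in\mathbb{E}(X)\colon\\ o(e)=u}}w_{t(e)}=\bm{D}_X(u,u),
\]
independently of $\sigma$, while all off-diagonal entries vanish since $\bm{D}_{X(\alpha)}$ is diagonal. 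As $\bm{D}_X$ is diagonal, the entries of $\bm{D}_X\otimes\bm{I}_G$ are $\bm{D}_X(u,u)\,\delta_{u,v}\delta_{\sigma,\tau}$, which matches entry-by-entry, establishing \eqref{eq_12}.

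For \eqref{eq_13} the key step is to identify the edges of $X(\alpha)$ running from $(u,\sigma)$ to $(v,\tau)$: these are exactly the pairs $(e,\sigma)$ with $o(e)=u$, $t(e)=v$, and $\sigma\alpha(e)=\tau$, i.e. $\alpha(e)=\sigma^{-1}\tau$. Each such edge contributes $\sqrt{w_u}\sqrt{w_v}$, so by the definition of $\mathcal{W}_X^{\alpha,\sigma}$,
\[
\mathcal{W}_{X(\alpha)}\bigl((u,\sigma),(v,\tau)\bigr)=\sum_{\substack{e\in\mathbb{E}(X)\colon\alpha(e)=\sigma^{-1}\tau,\\ o(e)=u,\ t(e)=v}}\sqrt{w_u}\sqrt{w_v}=\mathcal{W}_X^{\alpha,\sigma^{-1}\tau}(u,v).
\]
On the other hand, the $((u,\sigma),(v,\tau))$-entry of $\sum_{s\in G}\mathcal{W}_X^{\alpha,s}\otimes\rho(s)$ equals $\sum_{s\in G}\mathcal{W}_X^{\alpha,s}(u,v)\,\rho(s)(\sigma,\tau)$, and since $\rho(s)(\sigma,\tau)$ is nonzero only for the single index $s=\sigma^{-1}\tau$, this collapses to $\mathcal{W}_X^{\alpha,\sigma^{-1}\tau}(u,v)$, matching the left-hand side and giving \eqref{eq_13}.

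The only genuine subtlety, and the step I would watch most carefully, is the bookkeeping of the second coordinate: one must keep the voltage shift $\sigma\mapsto\sigma\alpha(e)$ consistent with the chosen side of the regular representation, so that the index $\sigma^{-1}\tau$ appearing in $\mathcal{W}_X^{\alpha,\sigma^{-1}\tau}$ coincides exactly with the unique $s$ selected by $\rho(s)(\sigma,\tau)$; had we used the left regular representation instead, an inverse would intrude and the formula would fail. Everything else is a routine unwinding of definitions, and, as the paper observes for its matrices, neither identity depends on the chosen total orders on $V(X)$ or $V(X(\alpha))$.
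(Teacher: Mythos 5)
Your proposal is correct and takes essentially the same approach as the paper: both proofs proceed by direct entry-by-entry comparison, identifying the edges of $X(\alpha)$ out of $(u,\sigma)$ as the pairs $(e,\sigma)$ with $o(e)=u$, and observing that the right regular representation factor $\rho(s)(\sigma,\tau)$ singles out the unique voltage $s=\sigma^{-1}\tau$, so the sum over $G$ collapses to the correct entry of $\mathcal{W}_{X(\alpha)}$. Your extra verifications---that $\rho(s)\rho(t)=\rho(st)$ and the remark distinguishing the right from the left regular representation---are harmless elaborations of points the paper leaves implicit.
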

\begin{proof}[Proof of \eqref{eq_12}]
Recall that $\mathbfit{D}_X\otimes \mathbfit{I}_G$ is labeled by $V(X)\times G$, and the total order on $V(X) \times G$ is given by the lexicographic order induced by total orders on $V(X)$ and $G$.
The $((u, \tau), (v, \zeta))$-entry of $\mathbfit{D}_X\otimes \mathbfit{I}_G$ is 
\[
\mathbfit{D}_X(u,v)\cdot \mathbfit{I}_G(\tau, \zeta)=
\begin{cases}
\sum_{\substack{e \in \mathbb{E}(X)\colon \\ o(e)=u}}w_{t(e)} & \text{if $u=v$ and $\tau=\zeta$}, \\
\hfill 0 \hfill\hfill &\text{otherwise.}
\end{cases}
\]
Hence $\mathbfit{D}_X\otimes \mathbfit{I}_G$ is a diagonal matrix and $((u,u),(\tau,\tau))$-entry is
\[
\sum_{\substack{e \in \mathbb{E}(X)\colon \\ o(e)=u}}w_{t(e)}=\sum_{\substack{(e, \tau) \in \mathbb{E}(X(\alpha))\colon \\ o(e, \tau)=(u,\tau)}}w_{t(e,\tau)},
\]
which implies that $\mathbfit{D}_X\otimes \mathbfit{I}_G=\mathbfit{D}_{X(\alpha)}$.
\end{proof}
\begin{proof}[Proof of \eqref{eq_13}]
The $((u, \tau), (v, \zeta))$-entry of $\mathcal{W}_X^{\alpha, \sigma}\otimes \rho(\sigma)$ is 
\[
\mathcal{W}_X^{\alpha, \sigma}(u,v)\cdot \rho(\sigma)(\tau, \zeta)=
\begin{cases}
\sum_{\substack{e \in \mathbb{E}(X)\colon \alpha(e)=\sigma, \\ o(e)=u,\ t(e)=v}}\sqrt{w_u}\sqrt{w_v}  & \text{if $\zeta=\tau\sigma$,} \\
\hfill 0 \hfill\hfill &\text{otherwise.}
\end{cases}
\]
Hence $((u, \tau), (v, \zeta))$-entry of $\sum_{\sigma \in G}\mathcal{W}_X^{\alpha, \sigma}\otimes \rho (\sigma)$ is 
\[
\sum_{\substack{e \in \mathbb{E}(X)\colon \zeta=\tau\alpha(e) \\ o(e)=u,\ t(e)=v}} \sqrt{w_u}\sqrt{w_v} 
=\sum_{\substack{(e, \tau)\in \mathbb{E}(X(\alpha))\colon \\ o(e,\tau)=(u, \tau), \\ t(e,\tau)=(v, \zeta)}}
\sqrt{w_u}\sqrt{w_v}
=\mathcal{W}_Y((u,\tau), (v,\zeta)),
\]
which completes the proof.
\end{proof}
\section{$h$-functions}\label{sec_3}
This section forms the technical heart of this paper.
Our aim in this subsection is to describe a relation of weighted complexities for a voltage cover.
\begin{definition}
Let $X$ be a $K$-valued vertex-weighted finite graph, let $G$ be a finite group, let $\alpha\colon \mathbb{E}(X)\to G$ be a voltage assignment, and let $\rho$ be a representation of $G$ over $K$ of degree $d_\rho$.
We define \glssymbol{hXalphapsit}$h_X^{\alpha}(\rho, t)$ to be a polynomial in $t$ given by
\begin{equation}
h_X^{\alpha}(\rho,t)=\det\left(
\mathbfit{I}_{V(X)}\otimes \mathbfit{I}_{d_{\rho}}-t\cdot\sum_{\sigma \in G}\mathcal{W}_X^{\alpha, \sigma}\otimes \rho(\sigma) +t^2\cdot \left(\mathbfit{D}_X-\mathbfit{I}_{V(X)}\right)\otimes \mathbfit{I}_{d_{\rho}}
\right).
\end{equation}
\end{definition}
\begin{remark}
In general, there exists a unique permutation square matrix $\mathbfit{R}$ of order $mn$ satisfying the following property:
$\mathbfit{R}^{\mathsf{T}}(\mathbfit{A} \otimes \mathbfit{B})\mathbfit{R}=\mathbfit{B} \otimes \mathbfit{A}$
for every square matrix $\mathbfit{A}$ of order $m$ and for every square matrix $\mathbfit{B}$ of order $n$.
Since permutation matrix is orthogonal, we deduce that $\det(\mathbfit{A}\otimes \mathbfit{B})=\det(\mathbfit{B}\otimes \mathbfit{A})$.
Hence we obtain
\begin{equation}\label{eq_16}
h_X^{\alpha}(\rho,t)=\det\left(
\mathbfit{I}_{d_{\rho}}\otimes\mathbfit{I}_{V(X)}
-t\cdot\sum_{\sigma \in G}\rho(\sigma)\otimes \mathcal{W}_X^{\alpha, \sigma}
+t^2\cdot \mathbfit{I}_{d_{\rho}}\otimes \left(\mathbfit{D}_X-\mathbfit{I}_{V(X)}\right)
\right).
\end{equation}
\end{remark}

Now, we prove one of our main theorems, which is the so-called decomposition formula. In \cite{WFS11}, Wu-Feng-Sato proves the decomposition formula for vertex-weighted complexities. We deduce their formula by proving one for rooted vertex-weighted complexities. Also, we remove the assumption $\Big(\sum_{e\in\mathbb{E}(X)}w_{o(e)}w_{t(e)}\Big)/2- \sum_{v\in V(X)}w_v\neq 0$, which Wu-Feng-Sato tacitly assumes.
\begin{theorem}\label{thm_6}
Let $X$ be a $K$-valued vertex-weighted connected finite graph, let $G$ be a finite group with $\operatorname{char} K \not\mid \# G$, let $\alpha\colon \mathbb{E}(X)\to G$ be a voltage assignment, and let $\widehat{G}$ be a full set of representatives of equivalence classes for the irreducible representations of $G$.
Then, for every $v \in V(X)$ and for every $\sigma \in G$,
\begin{equation}\label{eq_14}
\kappa_{(v, \sigma)}(X(\alpha))=\frac{\kappa_v(X)}{\#G}\cdot\prod_{\rho \in \widehat{G} \setminus \{\mathrm{triv}_G\}}
\left(h_X^{\alpha}(\rho,1)\right)^{d_{\rho}},
\end{equation}
where $\mathrm{triv}_G$ stands for the trivial representation of $G$.
Hence, summing up \eqref{eq_14} over all vertices of $X(\alpha)$, 
\begin{equation}\label{eq_21}
\kappa(X(\alpha))=\kappa(X)\cdot\prod_{\rho \in \widehat{G} \setminus \{\mathrm{triv}_G\}}
\left(h_X^{\alpha}(\rho,1)\right)^{d_{\rho}}
\end{equation}
holds.
\end{theorem}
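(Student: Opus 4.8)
The plan is to identify $\kappa_{(v,\sigma)}(X(\alpha))$ with a diagonal entry of the classical adjoint of $\mathcal{L}_{X(\alpha)}$ and then compute that adjoint by block-diagonalizing the regular representation. The reason to pass through $\adj$ rather than work with principal minors directly is that $\det(\mathcal{L}_{X(\alpha)}[\{(v,\sigma)\},\{(v,\sigma)\}]) = \adj(\mathcal{L}_{X(\alpha)})((v,\sigma),(v,\sigma))$ by Corollary \ref{coro_1}, and $\adj$ is compatible with conjugation ($\adj(\bm{Q}^{-1}M\bm{Q}) = \bm{Q}^{-1}\adj(M)\bm{Q}$), whereas a fixed principal minor is not. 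So I would phrase the whole computation through $\adj$.

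First I would use Proposition \ref{prop_3} to write $\mathcal{L}_{X(\alpha)} = \bm{D}_X\otimes\bm{I}_G - \sum_{\sigma\in G}\mathcal{W}_X^{\alpha,\sigma}\otimes\reg(\sigma)$, where $\reg$ is the right regular representation on $K[G]$. Since $\operatorname{char}K\nmid\#G$, Maschke's theorem provides (after extending scalars to a splitting field, e.g.\ $\bar{K}$) an invertible $\bm{P}$ with $\bm{P}^{-1}\reg(\sigma)\bm{P} = \bigoplus_{\rho\in\widehat{G}}\rho(\sigma)^{\oplus d_\rho}$ simultaneously for all $\sigma$. Conjugating by $\bm{I}_{V(X)}\otimes\bm{P}$ and reorganizing the tensor factors by a single permutation independent of $\sigma$ turns $\mathcal{L}_{X(\alpha)}$ into the block-diagonal matrix $\bm{M} := \bigoplus_{\rho\in\widehat{G}}\mathcal{L}_\rho^{\oplus d_\rho}$, where $\mathcal{L}_\rho := \bm{D}_X\otimes\bm{I}_{d_\rho} - \sum_{\sigma}\mathcal{W}_X^{\alpha,\sigma}\otimes\rho(\sigma)$. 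Evaluating the definition of $h_X^\alpha(\rho,t)$ at $t=1$ collapses the constant and $t^2$ terms into $\bm{D}_X\otimes\bm{I}_{d_\rho}$, so $\det\mathcal{L}_\rho = h_X^\alpha(\rho,1)$; and for $\rho=\triv_G$ one has $d_{\triv_G}=1$ and $\mathcal{L}_{\triv_G} = \bm{D}_X-\mathcal{W}_X = \mathcal{L}_X$.

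The structural crux is that $\mathcal{L}_X$ is singular (it annihilates $\sqrt{\mathscr{W}_X}\bm{1}$ by Proposition \ref{prop_1}) and that $\triv_G$ occurs in $\reg$ with multiplicity exactly $1$; hence $\bm{M}$ has exactly one block equal to the singular matrix $\mathcal{L}_X$. Applying the elementary identity for the adjoint of a block-diagonal matrix, namely $\adj\!\big(\bigoplus_l B_l\big)\big|_{B_l\times B_l} = \adj(B_l)\prod_{l'\neq l}\det B_{l'}$ with vanishing cross-block entries, the factor $\det\mathcal{L}_X = 0$ kills every block except the trivial one, so $\adj(\bm{M})$ is supported on the trivial block where it equals $\adj(\mathcal{L}_X)\prod_{\rho\neq\triv_G}(h_X^\alpha(\rho,1))^{d_\rho}$. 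Transporting back by conjugation, $\adj(\mathcal{L}_{X(\alpha)}) = \adj(\mathcal{L}_X)\otimes e_{\triv_G}\cdot\prod_{\rho\neq\triv_G}(h_X^\alpha(\rho,1))^{d_\rho}$, where $e_{\triv_G}$ is the projector of $K[G]$ onto its trivial isotypic line. Computing $e_{\triv_G} = \tfrac{1}{\#G}\sum_\sigma\reg(\sigma) = \tfrac{1}{\#G}\bm{J}_G$ (the all-ones matrix), its $(\sigma,\sigma)$-entry is $\tfrac{1}{\#G}$, and Corollary \ref{coro_1} for $X$ gives $\adj(\mathcal{L}_X)(v,v)=\kappa_v(X)$. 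Reading off the $((v,\sigma),(v,\sigma))$-entry and invoking Corollary \ref{coro_1} for $X(\alpha)$ yields $\kappa_{(v,\sigma)}(X(\alpha)) = \tfrac{\kappa_v(X)}{\#G}\prod_{\rho\neq\triv_G}(h_X^\alpha(\rho,1))^{d_\rho}$, which is \eqref{eq_14}; as both sides lie in $K'$, the identity over $\bar{K}$ descends. Summing \eqref{eq_14} over all $(v,\sigma)\in V(X(\alpha))$, the $\#G$ vertices in each fiber cancel the $\tfrac{1}{\#G}$ and give \eqref{eq_21}. The hard part is the final paragraph's bookkeeping: pinning down the scalar $\tfrac{1}{\#G}$ and the factor $\kappa_v(X)$ by correctly identifying the trivial-isotypic projector and tracking how the decomposition of $\reg$ sits inside the standard vertex basis; the repeated use of the singularity of $\mathcal{L}_X$ is precisely what concentrates the adjoint on the single trivial block and so removes any need for a connectivity or $\sum_z w_z\neq 0$ hypothesis.
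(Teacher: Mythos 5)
Your proof is correct, and although it opens exactly as the paper's does---Proposition \ref{prop_3}, Maschke's theorem applied to $\reg_G$, conjugation of $\mathcal{L}_{X(\alpha)}$ by $\bm{I}_{V(X)}\otimes\bm{P}$, and a tensor-swap permutation, so that the trivial block is $\mathcal{L}_X$ and the remaining blocks are $\bm{H}_X^{\alpha}(\rho,1)$---the way you then extract $\kappa_{(v,\sigma)}(X(\alpha))$ is genuinely different. The paper evaluates the single principal minor at $(v,\sigma)$ of the conjugated matrix in two ways via the Cauchy--Binet formula: once through the block-diagonal form, which forces the convention that $(v,\sigma)$ be the smallest index so that the permutation matrices do not disturb that minor, and once through the rank-one expression $\adj(\mathcal{L}_{X(\alpha)})=\varkappa\sqrt{\mathscr{W}_{X(\alpha)}}\,\bm{J}\sqrt{\mathscr{W}_{X(\alpha)}}$ of \eqref{eq_9}, which additionally requires normalizing $\bm{P}$ so that its $\sigma$-th column is $\bm{1}$ and tracking the row sums of $\bm{P}^{-1}$. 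You instead compute the entire adjugate: the equivariance $\adj(\bm{Q}^{-1}M\bm{Q})=\bm{Q}^{-1}\adj(M)\bm{Q}$, the block-diagonal adjugate formula, $\det\mathcal{L}_X=0$, and the multiplicity one of $\triv_G$ in $\reg_G$ together concentrate the adjugate on the trivial block, and transporting back gives $\adj(\mathcal{L}_{X(\alpha)})=\bigl(\prod_{\rho\neq\triv_G}(h_X^{\alpha}(\rho,1))^{d_\rho}\bigr)\,\adj(\mathcal{L}_X)\otimes\tfrac{1}{\#G}\bm{J}$. This buys you independence from Theorem \ref{thm_5}/\eqref{eq_9}, no normalization of $\bm{P}$ and no ordering conventions or Cauchy--Binet bookkeeping, and a strictly stronger conclusion (every entry of $\adj(\mathcal{L}_{X(\alpha)})$ is determined, not just one principal minor). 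The one step you should write out is the identification of the transported coordinate projector with $e_{\triv_G}=\tfrac{1}{\#G}\sum_{\sigma}\reg_G(\sigma)=\tfrac{1}{\#G}\bm{J}$: the image of the trivial coordinate under $\bm{P}$ is the fixed line spanned by $\bm{1}$, and the span of the remaining columns of $\bm{P}$ is a $G$-invariant complement, hence---by multiplicity one again---equals the sum of the nontrivial isotypic components; since a projector is determined by its image and kernel, it is the canonical one. (Incidentally, the equality of a diagonal adjugate entry with the corresponding principal minor is just the cofactor definition of $\adj$, not Corollary \ref{coro_1}; the corollary is what converts that minor into a rooted complexity.) Finally, your argument shares one implicit extension with the paper's: Corollary \ref{coro_1} is stated for connected graphs but is applied to $X(\alpha)$, which need not be connected; the identity extends harmlessly, since for a disconnected graph both sides vanish---the Laplacian is block-diagonal over the components with every block singular---but this deserves a sentence in either proof.
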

\begin{proof}
%%%%%ref Jacobson, Basic algebra II, p.260, Theorem 5.5
We write $\reg_G$ for the right regular representation of $G$.
Then, since $\operatorname{char} K \not\mid \# G$,  there is an invertible matrix $\mathbfit{P}$ such that $\mathbfit{P}^{-1}\reg_G(\sigma)\mathbfit{P}=\mathrm{triv}_G(\sigma)\oplus
\left(\bigoplus_{\rho\in \widehat{G}\setminus \{\mathrm{triv}_G\}}\rho(\sigma)^{\oplus d_{\rho}}\right)$.
%%%%%%%%%%%%%
Since $\reg_G(\sigma) \mathbfit{1} =\mathbfit{1}$ for the vector $\mathbfit{1}$ whose entries are all 1,
we may assume that $\sigma$-th column of $\mathbfit{P}$ is $\mathbfit{1}$.
Hence the sum of all entries of the $\sigma$-th row of $\mathbfit{P}^{-1}$ is 1.

Assume that $v$ is the smallest element in $V(X)$, and $\sigma$ is the smallest element in $G$.
Let $\mathbfit{R}_{V(X),G}$ denote the permutation matrix that commutes all Kronecker products of square matrices labeled by $V(X)$ and square matrices labeled by $G$.
Then the $(v,\sigma)$-th column of $\mathbfit{R}_{V(X),G}$ is the $(v, \sigma)$-th vector of the standard basis.
By \eqref{eq_12} and \eqref{eq_13}, we have
\begin{align}
%%%%%%%%%%%%%%%%%%%%%一本目
&(
\mathbfit{I}_{V(X)}\otimes \mathbfit{P}
)^{-1}
\mathcal{L}_{X(\alpha)}
(
\mathbfit{I}_{V(X)}\otimes \mathbfit{P}
) \\
%%%%%%%%%%%%%%%%%%%%%%二本目
&=\left(
\mathbfit{I}_{V(X)}\otimes \mathbfit{P}
\right)^{-1}
\left(
\mathbfit{D}_X\otimes\mathbfit{I}_G - \sum_{\sigma \in G}\mathcal{W}_X^{\alpha, \sigma}\otimes \reg_G(\sigma)
\right)
\left(
\mathbfit{I}_{V(X)}\otimes \mathbfit{P}
\right) \\
%%%%%%%%%%%%%%%%%%%%%%%%%%%%%%%%%%%%%%%%%%%%%%%%%%%%%%%%%%三本目
&=
\mathbfit{D}_X\otimes\mathbfit{I}_G - \sum_{\sigma \in G}\mathcal{W}_X^{\alpha, \sigma}\otimes 
\left(
\mathrm{triv}_G(\sigma)\oplus
\left(\bigoplus_{\rho\in \widehat{G}\setminus \{\mathrm{triv}_G\}}\rho(\sigma)^{\oplus d_{\rho}}\right)
\right) \\
%%%%%%%%%%%%%%%%%%%%%%%%%%%%%%%%%%%%%%%%%%%%%%%%%%%%%%%%%%四本目
&=(\mathbfit{R}_{V(X),G})^{\mathsf{T}}\left(
\mathbfit{I}_G\otimes \mathbfit{D}_X - \sum_{\sigma \in G}
\left(
\left(
\mathrm{triv}_G(\sigma)\oplus
\left(\bigoplus_{\rho\in \widehat{G}\setminus \{\mathrm{triv}_G\}}\rho(\sigma)^{\oplus d_{\rho}}\right)
\right)
\otimes \mathcal{W}_X^{\alpha, \sigma}
\right)
\right)\mathbfit{R}_{V(X),G}\\
%%%%%%%%%%%%%%%%%%%%%%%%%%%%%%%%%%%%%%%%%%%%%%%%%%%%%%%%%%五本目
&=(\mathbfit{R}_{V(X),G})^{\mathsf{T}}\left(
\mathbfit{I}_G\otimes \mathbfit{D}_X - \sum_{\sigma \in G}
\left(
\mathcal{W}_X^{\alpha, \sigma}\oplus
\left(\bigoplus_{\rho\in \widehat{G}\setminus \{\mathrm{triv}_G\}}\rho(\sigma)^{\oplus d_{\rho}}\otimes \mathcal{W}_X^{\alpha, \sigma}\right)
\right)
\right)\mathbfit{R}_{V(X),G}\\
%%%%%%%%%%%%%%%%%%%%%%%%%%%%%%%%%%%%%%%%%%%%%%%%%%%%%%%%%%六本目
&=(\mathbfit{R}_{V(X),G})^{\mathsf{T}}\left(
\left(
\mathbfit{D}_X-\sum_{\sigma \in G}\mathcal{W}_X^{\alpha, \sigma}
\right)
\oplus
\left(\bigoplus_{\rho\in \widehat{G}\setminus \{\mathrm{triv}_G\}}\hspace{-3pt}
\left(
(\mathbfit{I}_{d_{\rho}})^{\oplus d_{\rho}}\otimes\mathbfit{D}_X
-\sum_{\sigma \in G}
\rho(\sigma)^{\oplus d_{\rho}}\otimes \mathcal{W}_X^{\alpha, \sigma}\right)
\right)
\right)\mathbfit{R}_{V(X),G} \\
%%%%%%%%%%%%%%%%%%%%%%%%%%%%%%%%%%%%%%%%%%%%%%%%%%%%%%%%%%七本目
&=(\mathbfit{R}_{V(X),G})^{\mathsf{T}}\left(
\mathcal{L}_X
\oplus
\left(\bigoplus_{\rho\in \widehat{G}\setminus \{\mathrm{triv}_G\}}
\left(
\mathbfit{I}_{d_{\rho}}\otimes\mathbfit{D}_X
-\sum_{\sigma \in G}
\rho(\sigma)\otimes \mathcal{W}_X^{\alpha, \sigma}\right)^{\oplus d_{\rho}}
\right)
\right)\mathbfit{R}_{V(X),G} \\
%%%%%%%%%%%%%%%%%%%%%%%%%%%%%%%%%%%%%%%%%%%%%%%%%%%%%%%%%%八本目
&=(\mathbfit{R}_{V(X),G})^{\mathsf{T}}\left(
\mathcal{L}_X
\oplus
\left(\bigoplus_{\rho\in \widehat{G}\setminus \{\mathrm{triv}_G\}}
\left(\mathbfit{H}_X^{\alpha}(\rho, 1)\right)^{\oplus d_{\rho}}
\right)
\right)\mathbfit{R}_{V(X),G}, \label{eq_15}
\end{align}
where \glssymbol{bmHXalphapsit}$\mathbfit{H}_X^{\alpha}(\rho, t)=\mathbfit{I}_{d_{\rho}}\otimes\mathbfit{I}_{V(X)}
-t\cdot\sum_{\sigma \in G}\rho(\sigma)\otimes \mathcal{W}_X^{\alpha, \sigma}
+t^2\cdot \mathbfit{I}_{d_{\rho}}\otimes \left(\mathbfit{D}_X-\mathbfit{I}_{V(X)}\right)$, which is the matrix appearing in \eqref{eq_16}.
Hence, applying the Cauchy-Binet formula to \eqref{eq_15},
\begin{align*}
%%%%%%%%%%%%%%%%%%一本目
\det\bigl(&(\mathbfit{I}_{V(X)}\otimes \mathbfit{P}
)^{-1}
\mathcal{L}_{X(\alpha)}
(
\mathbfit{I}_{V(X)}\otimes \mathbfit{P}))[\{(v,\sigma)\},\{(v, \sigma)\}]\bigr) \\
%%%%%%%%%%%%%%%%%%二本目
&=\sum_{(u, \tau),\ (x,\zeta) \in V(X)\times G}
\left(
\begin{gathered}
\det\bigl((\mathbfit{R}_{V(X),G})^{\mathsf{T}}[\{(v,\sigma)\},\{(u, \tau)\}]\bigr)\cdot \\
\det\left(\left(\mathcal{L}_X\oplus\left(\bigoplus_{\rho\in \widehat{G}\setminus \{\mathrm{triv}_G\}}\left(\mathbfit{H}_X^{\alpha}(\rho, 1)\right)^{\oplus d_{\rho}}\right)
\right)[\{(u,\tau)\},\{(x,\zeta)\}]\right) \cdot \\
\det\bigl(\mathbfit{R}_{V(X),G}[\{(x,\zeta)\},\{(v, \sigma)\}]\bigr)
 \end{gathered} 
 \right) \\
 %%%%%%%%%%%%%%%%三本目
&=\det\left(\left(\mathcal{L}_X\oplus\left(\bigoplus_{\rho\in \widehat{G}\setminus \{\mathrm{triv}_G\}}\left(\mathbfit{H}_X^{\alpha}(\rho, 1)\right)^{\oplus d_{\rho}}\right)
\right)[\{(v,\sigma)\},\{(v, \sigma)\}]\right) \\
%%%%%%%%%%%%%%%%%%%%四本目
&=\det\left( \mathcal{L}_X[\{v\},\{v\}]\right) \prod_{\rho\in\widehat{G} \setminus \{\mathrm{triv}_G\}}\det\left(\mathbfit{H}_X^{\alpha}(\rho, 1)\right)^{d_{\rho}} \quad \text{(Note that $(v, \sigma)$ is the smallest in $V(X)\times G$.)}\\
%%%%%%%%%%%%%%%%%%%%%%%%%%%五本目
&=\kappa_v(X)\prod_{\rho\in\widehat{G} \setminus \{\mathrm{triv}_G\}}\left(h_X^{\alpha}(\rho, 1)\right)^{d_{\rho}}. \numberthis \label{eq_17}
\end{align*}
On the other hand, pick $\varkappa \in \overbar{\mathbb{Q}}_p$ that satisfies $\adj\left(\mathcal{L}_{X(\alpha)}\right)=\varkappa\cdot \sqrt{\mathscr{W}_{X(\alpha)}}\ \mathbfit{J}\sqrt{\mathscr{W}_{X(\alpha)}}$ as in \eqref{eq_9}.
Then, by using the Cauchy-Binet formula again, we have
\begin{align*}
%%%%%%%%%%%%%%%%%%一本目
\det&((\mathbfit{I}_{V(X)}\otimes \mathbfit{P}
)^{-1}
\mathcal{L}_{X(\alpha)}
(
\mathbfit{I}_{V(X)}\otimes \mathbfit{P}))[\{(v,\sigma)\},\{(v, \sigma)\}] \\
%%%%%%%%%%%%%%%二本目
&=\sum_{(u, \tau),\ (x,\zeta) \in V(X)\times G}
\left(
\begin{gathered}
\det\biggl(\left(\mathbfit{I}_{V(X)}\otimes \mathbfit{P}\right)^{-1}[\{(v,\sigma)\},\{(u, \tau)\}]\biggr)\cdot \\
\det\left(\left(\mathcal{L}_{X(\alpha)}
\right)[\{(u,\tau)\},\{(x,\zeta)\}]\right) \cdot \\
\det\bigl(\left(\mathbfit{I}_{V(X)}\otimes \mathbfit{P}\right)[\{(x,\zeta)\},\{(v, \sigma)\}]\bigr)
 \end{gathered} 
 \right)  \\
 %%%%%%%%%%三本目
 &=\sum_{(u, \tau),\ (x,\zeta) \in V(X)\times G}
\left(
\begin{gathered}
\adj\left(\frac{1}{\det\left(\mathbfit{I}_{V(X)}\otimes\mathbfit{P}\right)}\adj\left(\mathbfit{I}_{V(X)}\otimes \mathbfit{P}\right)\right)((u,\tau),(v, \sigma))\cdot \\
\varkappa\sqrt{w_{(u, \tau)}}\sqrt{w_{(x,\zeta)}}\ \cdot \\
\adj\left(\mathbfit{I}_{V(X)}\otimes \mathbfit{P}\right)((v, \sigma), (x,\zeta))
 \end{gathered} 
 \right) \\
%%%%%%%%%%%%四本目
&\overset{\heartsuit}{=}\sum_{(u, \tau),\ (x,\zeta) \in V(X)\times G}
\left(
\begin{gathered}
\left(\frac{1}{\det\left(\mathbfit{I}_{V(X)}\otimes\mathbfit{P}\right)}\left(\mathbfit{I}_{V(X)}\otimes \mathbfit{P}\right)\right)((u,\tau),(v, \sigma))\cdot \\
\varkappa\sqrt{w_u}\sqrt{w_x}\ \cdot \\
\det\left(\mathbfit{I}_{V(X)}\otimes\mathbfit{P}\right)\cdot\left(\mathbfit{I}_{V(X)}\otimes \mathbfit{P}^{-1}\right)((v, \sigma),(x,\zeta))
 \end{gathered} 
 \right) \\
 %%%%%%%%%%%%%%五本目
 &=\sum_{(u, \tau),\ (x,\zeta) \in V(X)\times G}
\left(
\begin{gathered}
\left(\mathbfit{I}_{V(X)}\otimes \mathbfit{P}\right)((u,\tau),(v, \sigma))\cdot \\
\varkappa\sqrt{w_u}\sqrt{w_x}\ \cdot \\
\left(\mathbfit{I}_{V(X)}\otimes \mathbfit{P}^{-1}\right)((v, \sigma),(x,\zeta))
 \end{gathered} 
 \right) \\
 %%%%%%%%%%%%%%%六本目
 &=\sum_{\tau,\zeta \in G}\varkappa w_v \cdot \mathbfit{P}^{-1}(\sigma, \zeta)\quad\text{(Recall that $\sigma$-th column of $\mathbfit{P}$ is $\mathbfit{1}$.)}\\ 
 %%%%%%%%%%%%%%%%七本目
 &=\#G\cdot \varkappa w_v\quad\text{(Recall that the sum of all entries of $\sigma$-th row of $\mathbfit{P}^{-1}$ is 1.)} \\
 %%%%%%%%%%%%%%%%八本目
 &= \#G\cdot \kappa_{(v,\sigma)}(X(\alpha)).\numberthis \label{eq_18}
\end{align*}
In $(\heartsuit)$, we use the fact that $\adj\left(\frac{1}{\det(A)}\adj(A)\right)=\frac{1}{\det(A)}A$ for an invertible matrix $A$.
Therefore, combining \eqref{eq_17} with \eqref{eq_18}, we obtain \eqref{eq_14}.
\end{proof}
\begin{remark}
The manner of our proof is still valid for \cite[Thm. 3.2]{AMT},
and hence we can generalize their statement to the case that $G$ is non-abelian.
\end{remark}
%%%%%%%%%%%%%%%直和則と誘導表現
\subsection{Direct sum rule and induction property for $h$-functions}\label{subsec_3.1}
In this subsection, we establish the direct sum rule and the induction property for $h$-functions.

The following is the direct sum rule.
\begin{theorem}
Let $X$ be a $K$-valued vertex-weighted finite graph, let $\alpha\colon \mathbb{E}(X)\to G$ be a voltage assignment with a finite group $G$.
Then
\begin{equation}\label{eq_26}
h_X^{\alpha}(\rho_1\oplus \rho_2, t)=h_X^{\alpha}(\rho_1, t)\cdot h_X^{\alpha}(\rho_2, t)
\end{equation}
for every representations $\rho_1$ and $\rho_2$ of $G$.
\end{theorem}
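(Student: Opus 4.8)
The plan is to exploit the block-diagonal shape of $(\rho_1\oplus\rho_2)(\sigma)$ together with the expression \eqref{eq_16} for $h_X^{\alpha}$, in which the representation occupies the \emph{left} factor of each Kronecker product. Write $d_i=d_{\rho_i}$, so that $d_{\rho_1\oplus\rho_2}=d_1+d_2$, and recall that $(\rho_1\oplus\rho_2)(\sigma)=\rho_1(\sigma)\oplus\rho_2(\sigma)$ is block diagonal for every $\sigma\in G$. First I would reduce the statement to a comparison of the matrices $\bm{H}_X^{\alpha}(\rho,t)$ appearing inside the determinant in \eqref{eq_16}.

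The key elementary identity I would invoke is that the Kronecker product distributes over a direct sum in its left factor: for matrices $A_1$, $A_2$, $B$,
\[
(A_1\oplus A_2)\otimes B=(A_1\otimes B)\oplus(A_2\otimes B).
\]
Applying this to each of the three summands of $\bm{H}_X^{\alpha}(\rho_1\oplus\rho_2,t)$ — using $\bm{I}_{d_1+d_2}=\bm{I}_{d_1}\oplus\bm{I}_{d_2}$ for the first and third summands and $(\rho_1\oplus\rho_2)(\sigma)=\rho_1(\sigma)\oplus\rho_2(\sigma)$ for the middle one — shows that each summand is block diagonal with the \emph{same} partition into a block of size $d_1\cdot\#V(X)$ and a block of size $d_2\cdot\#V(X)$. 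Since a sum of block-diagonal matrices sharing a common partition is again block diagonal, the entire matrix inside the determinant defining $h_X^{\alpha}(\rho_1\oplus\rho_2,t)$ equals $\bm{H}_X^{\alpha}(\rho_1,t)\oplus\bm{H}_X^{\alpha}(\rho_2,t)$.

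The conclusion then follows from multiplicativity of the determinant on block-diagonal matrices:
\[
h_X^{\alpha}(\rho_1\oplus\rho_2,t)=\det\left(\bm{H}_X^{\alpha}(\rho_1,t)\oplus\bm{H}_X^{\alpha}(\rho_2,t)\right)=\det\bm{H}_X^{\alpha}(\rho_1,t)\cdot\det\bm{H}_X^{\alpha}(\rho_2,t)=h_X^{\alpha}(\rho_1,t)\,h_X^{\alpha}(\rho_2,t).
\]
I do not anticipate a genuine obstacle here; the only point that requires care is the choice of the form \eqref{eq_16} over the original definition. In the original definition the representation sits in the \emph{right} factor, and $B\otimes(A_1\oplus A_2)$ is block diagonal only after conjugating by a permutation matrix, so the block structure is not manifest. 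Using \eqref{eq_16} sidesteps this bookkeeping entirely, and the summation over $\sigma\in G$ causes no difficulty because every term respects the one fixed block partition.
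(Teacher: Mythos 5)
Your proposal is correct and follows essentially the same route as the paper: the authors likewise work with the form \eqref{eq_16} (representation in the left Kronecker factor), distribute $\otimes\,\mathcal{W}_X^{\alpha,\sigma}$ and $\otimes\,(\bm{D}_X-\bm{I}_{V(X)})$ over the direct sum $(\rho_1\oplus\rho_2)(\sigma)=\rho_1(\sigma)\oplus\rho_2(\sigma)$ term by term, regroup into a block-diagonal matrix $\bm{H}_X^{\alpha}(\rho_1,t)\oplus\bm{H}_X^{\alpha}(\rho_2,t)$, and conclude by multiplicativity of the determinant on block-diagonal matrices. Your remark about why \eqref{eq_16} is preferable to the original definition (which would place the representation in the right factor and hide the block structure behind a permutation conjugation) is exactly the point implicit in the paper's choice of that expression.
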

\begin{proof}
Using the expression \eqref{eq_16}, we have
\begin{align*}
h_X^{\alpha}(\rho_1\oplus \rho_2. t) &=\det\left(
\mathbfit{I}_{d_{\rho_1\oplus \rho_2}}\otimes\mathbfit{I}_{V(X)}
-t\cdot \sum_{\sigma \in G}\bigl((\rho_1\oplus\rho_2)(\sigma)\bigr)\otimes\mathcal{W}_X^{\alpha, \sigma}
+t^2\cdot \mathbfit{I}_{d_{\rho_1\oplus \rho_2}}\otimes \left(\mathbfit{D}_X-\mathbfit{I}_{V(X)}\right)
\right)\\
&=\det\left(
\begin{gathered}
\left(\mathbfit{I}_{d_{\rho_1}}\otimes\mathbfit{I}_{V(X)}\right)\oplus \left(\mathbfit{I}_{d_{\rho_2}}\otimes\mathbfit{I}_{V(X)}\right)\\
-t\cdot \sum_{\sigma \in G}\bigl(\rho_1(\sigma)\otimes\mathcal{W}_X^{\alpha, \sigma}\bigr)\oplus \bigl(\rho_2(\sigma)\otimes\mathcal{W}_X^{\alpha, \sigma}\bigr)\\
+t^2\cdot \left(\mathbfit{I}_{d_{\rho_1}}\otimes(\mathbfit{D}_X-\mathbfit{I}_{V(X)})\right)\oplus \left(\mathbfit{I}_{d_{\rho_2}}\otimes(\mathbfit{D}_X-\mathbfit{I}_{V(X)})\right)
\end{gathered}
\right) \\
&=\det\left(
\begin{gathered}
\left(\mathbfit{I}_{d_{\rho_1}}\otimes\mathbfit{I}_{V(X)}
-t\cdot\sum_{\sigma \in G}\rho(\sigma)\otimes \mathcal{W}_X^{\alpha, \sigma}
+t^2\cdot \mathbfit{I}_{d_{\rho_1}}\otimes \left(\mathbfit{D}_X-\mathbfit{I}_{V(X)}\right)
\right) \\
\oplus\left(\mathbfit{I}_{d_{\rho_2}}\otimes\mathbfit{I}_{V(X)}
-t\cdot\sum_{\sigma \in G}\rho_2(\sigma)\otimes \mathcal{W}_X^{\alpha, \sigma}
+t^2\cdot \mathbfit{I}_{d_{\rho_2}}\otimes \left(\mathbfit{D}_X-\mathbfit{I}_{V(X)}\right)
\right) 
\end{gathered}
\right) \\
&=h_X^{\alpha}(\rho_1, t) \cdot h_X^{\alpha}(\rho_2, t),
\end{align*}
as desired.
\end{proof}

Next, we state the induction property for $h$-functions.
We first recall some facts of intermediate graphs of a Galois cover $X(\alpha)/X$.
Let $X$ be a $K$-valued vertex-weighted finite graph, let $\alpha\colon \mathbb{E}(X)\to G$ be a voltage assignment with a finite group $G$, and let $H$ be a subgroup of $G$.
An intermediate graph $Z$ corresponding to $H$ may be  given by 
\begin{equation}\label{eq_22}
\begin{cases}
V(Z)=V(X)\times (H \backslash G), \\
\mathbb{E}(Z)=\mathbb{E}(X) \times  (H \backslash G), \\
\inc_Z\colon \left(e, H\sigma\right) \mapsto \bigl(\left(o(e), H\sigma\right), \left(t(e), H\sigma \alpha(e)\right)\bigr).
\end{cases}
\end{equation}
If we choose a full set $\{\sigma_1,\dotsc,\sigma_m\}$ of representatives of $H\backslash G$, then we obtain a voltage assignment $\beta\colon \mathbb{E}(Z) \to H$ as follows.
For $(e, H \sigma_i) \in \mathbb{E}(Z)$, there exists a unique $j \in \{1,\dotsc, m\}$ such that $H\sigma_i\alpha(e)=H\sigma_j$.
Now let $\beta\bigl((e, H \sigma_i)\bigr)=\sigma_i\alpha(e){\sigma_j}^{-1}$.
We may directly check that $\beta$ is a voltage assignment, and $Z(\beta) \cong X(\alpha)$ under the identifications $\bigl((v, H\sigma_i),\tau\bigr) \in V(Z(\beta))$ with $(v, \tau \sigma_i) \in V(X(\alpha))$ and $\bigl((e, H\sigma_i),\tau\bigr) \in \mathbb{E}(Z(\beta))$ with $(e, \tau \sigma_i) \in \mathbb{E}(X(\alpha))$.
In the remainder of this subsection, we fix a full set of representatives of $H\backslash G$ and induced voltage assignment $\beta$.
\begin{lemma}
Let $\rho\colon H \to \GL(V)$ be a representation of $H$ over $K$, and let $\Ind_H^G\rho$ stand for the induced representation of $\rho$.
Then
\begin{equation}\label{eq_23}
\sum_{\xi \in G}\left( \mathcal{W}_X^{\alpha, \xi}\otimes \left(\Ind_H^G\rho\right)(\xi)\right)=\sum_{\tau \in H}\left(\mathcal{W}_{Z}^{\beta,\tau}\otimes \rho(\tau)\right).
\end{equation}
\end{lemma}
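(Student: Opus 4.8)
The plan is to establish \eqref{eq_23} by a direct block-by-block comparison. After fixing the coset representatives $\sigma_1,\dots,\sigma_m$ of $H\backslash G$, both sides are square matrices indexed by $V(X)\times\{\sigma_1,\dots,\sigma_m\}$, each scalar entry being replaced by a $d_\rho\times d_\rho$ block with $d_\rho=\dim V$; throughout I identify the coset $H\sigma_i$ with its representative. The first step is to record the block form of the induced representation relative to these representatives: writing $\rho^\circ(g)$ for $\rho(g)$ when $g\in H$ and for the zero matrix otherwise, the $(i,j)$-block of $\left(\Ind_H^G\rho\right)(\xi)$ is $\rho^\circ\!\left(\sigma_i\xi\sigma_j^{-1}\right)$, which is nonzero exactly when $H\sigma_i\xi=H\sigma_j$. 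One checks this is a genuine homomorphism because, for fixed $i$ and $\xi$, the index $k$ with $\sigma_i\xi\sigma_k^{-1}\in H$ is unique, so the matrix product telescopes.

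Next I would compute the $\bigl((u,\sigma_i),(v,\sigma_j)\bigr)$-block of the left-hand side. Expanding $\mathcal{W}_X^{\alpha,\xi}(u,v)=\sum_{e\colon \alpha(e)=\xi,\,o(e)=u,\,t(e)=v}\sqrt{w_u}\sqrt{w_v}$ and summing over $\xi\in G$, I reorganize the double sum as a single sum over edges $e$ with $o(e)=u$, $t(e)=v$, taking $\xi=\alpha(e)$. The block form from the first step then forces the contribution of such an $e$ to vanish unless $H\sigma_i\alpha(e)=H\sigma_j$, in which case it equals $\sqrt{w_u}\sqrt{w_v}\,\rho\!\left(\sigma_i\alpha(e)\sigma_j^{-1}\right)$. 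Thus the left-hand block equals
\begin{equation*}
\sum_{\substack{e\colon o(e)=u,\ t(e)=v\\ H\sigma_i\alpha(e)=H\sigma_j}}\sqrt{w_u}\sqrt{w_v}\,\rho\!\left(\sigma_i\alpha(e)\sigma_j^{-1}\right).
\end{equation*}

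For the right-hand side I would use the description \eqref{eq_22} of $Z$ together with its inherited weight $w_{(v,H\sigma)}=w_v$, so that $\sqrt{w_{(u,\sigma_i)}}\sqrt{w_{(v,\sigma_j)}}=\sqrt{w_u}\sqrt{w_v}$. An edge $f=(e,H\sigma_i)$ has $o(f)=(u,\sigma_i)$ and $t(f)=(v,\sigma_j)$ precisely when $o(e)=u$, $t(e)=v$, and $H\sigma_i\alpha(e)=H\sigma_j$, and for such $f$ the definition of $\beta$ gives $\beta(f)=\sigma_i\alpha(e)\sigma_j^{-1}\in H$. Hence the $\bigl((u,\sigma_i),(v,\sigma_j)\bigr)$-block of $\sum_{\tau\in H}\mathcal{W}_Z^{\beta,\tau}\otimes\rho(\tau)$ is $\sum_{\tau}\mathcal{W}_Z^{\beta,\tau}\bigl((u,\sigma_i),(v,\sigma_j)\bigr)\rho(\tau)$, and letting $\tau$ run over $H$ collects exactly the same edges $e$ with the same group element $\beta(f)=\sigma_i\alpha(e)\sigma_j^{-1}$; this reproduces the displayed expression above. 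Comparing the two blocks then yields \eqref{eq_23}.

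The bulk of the argument is bookkeeping, so the hard part will be pinning down the conventions so that everything lines up: one must choose the matrix realization of $\Ind_H^G\rho$ (right cosets, with $(i,j)$-block $\rho^\circ(\sigma_i\xi\sigma_j^{-1})$) compatibly with the definition $\beta\bigl((e,H\sigma_i)\bigr)=\sigma_i\alpha(e)\sigma_j^{-1}$ of the induced voltage assignment, and one must track the inherited vertex weights on $Z$. Loops ($o(e)=t(e)$) and the diagonal blocks need no separate treatment, since the edge-sum description of $\mathcal{W}$ is uniform in $u$ and $v$.
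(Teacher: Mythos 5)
Your proof is correct and follows essentially the same route as the paper's: both arguments fix representatives of $H\backslash G$, realize $\Ind_H^G\rho(\xi)$ as the block matrix with $(i,j)$-block $\rho\left(\sigma_i\xi\sigma_j^{-1}\right)$ (zero when $\sigma_i\xi\sigma_j^{-1}\notin H$), and verify \eqref{eq_23} by matching, entry by entry, the edge sums defining $\mathcal{W}_X^{\alpha,\xi}$ and $\mathcal{W}_Z^{\beta,\tau}$ via the relation $\beta\bigl((e,H\sigma_i)\bigr)=\sigma_i\alpha(e)\sigma_j^{-1}$ and the inherited weights on $Z$. The only cosmetic difference is that the paper starts from the right-hand side and unwinds it into the left-hand side, while you compute both blocks independently and compare.
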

\begin{proof}
Fix a basis $(\mathbfit{e}_1,\dotsc, \mathbfit{e}_{d_{\rho}})$ for $V$.
We first note that $\{\sigma_1^{-1},\dotsc, \sigma_m^{-1}\}$ is a full set of representatives of $G/H$, and the representation matrix of $\Ind_H^G\rho(\xi)$ under the basis 
\[
({\sigma_1}^{-1}\mathbfit{e}_1,\dotsc, {\sigma_1}^{-1}\mathbfit{e}_{d_{\rho}},{\sigma_2}^{-1}\mathbfit{e}_1,\dotsc, {\sigma_2}^{-1}\mathbfit{e}_{d_{\rho}},\dotsc, {\sigma_m}^{-1}\mathbfit{e}_1,\dotsc, {\sigma_m}^{-1}\mathbfit{e}_{d_{\rho}})
\]
for $\bigoplus_{i=1}^m\sigma_i K^{d_{\rho}}$ is given by
\begin{equation}
\begin{pmatrix}
\rho(\sigma_1\xi{\sigma_1}^{-1}) & \rho(\sigma_1\xi{\sigma_2}^{-1}) & \cdots & \rho(\sigma_1\xi{\sigma_m}^{-1}) \\
\rho(\sigma_2\xi{\sigma_1}^{-1}) & \rho(\sigma_2\xi{\sigma_2}^{-1}) & \cdots & \rho(\sigma_2\xi{\sigma_m}^{-1}) \\
\vdots&\vdots&\cdots&\vdots \\
\rho(\sigma_m\xi{\sigma_1}^{-1}) & \rho(\sigma_m\xi{\sigma_2}^{-1}) & \cdots & \rho(\sigma_m\xi{\sigma_m}^{-1})
\end{pmatrix},
\end{equation}
where $\rho(\xi)$ is defined by
\begin{equation}
\rho(\xi)=
\begin{cases}
\begin{minipage}{5.7cm}
the representation matrix of $\rho(\xi)$ under the standard basis
\vspace{5pt}
\end{minipage}& \text{if $\xi \in H$,} \\
\text{the square zero matrix of order $d_{\rho}$} & \text{if $\xi \not\in H$.}
\end{cases}
\end{equation}
Now we calculate 
\begin{align*}
\sum_{\tau \in H}&\left(\mathcal{W}_{Z}^{\beta,\tau}\otimes \rho(\tau)\right)\biggl(\bigl((u,H\sigma_i),\mathbfit{e}_k\bigr), \bigl((v,H\sigma_j),\mathbfit{e}_l\bigr)\biggr) \\
&=\sum_{\substack{(e, H\sigma_h)\in \mathbb{E}(Z)\colon\\ o(e, H\sigma_h)=(u,H\sigma_i), \\ t(e, H\sigma_h)=(v, H\sigma_j),\\ \beta(e, H\sigma_h)=\tau}}\sqrt{w_{(u, H\sigma_{i\vphantom{j}})}}\sqrt{w_{(v, H\sigma_j)}}\rho(\tau)(\mathbfit{e}_k,\mathbfit{e}_l) \\
&=\sum_{\substack{e\in \mathbb{E}(X)\colon\\ o(e)=u,\ t(e)=v,\\\sigma_i\alpha(e){\sigma_j}^{-1}\in H}}\sqrt{w_u}\sqrt{w_v}\rho(\sigma_i\alpha(e){\sigma_j}^{-1})(\mathbfit{e}_k,\mathbfit{e}_l) \\
&=\sum_{\xi \in G}\left(\sum_{\substack{e\in \mathbb{E}(X)\colon \alpha(e)=\xi\\ o(e)=u,\ t(e)=v}}\sqrt{w_u}\sqrt{w_v}\rho(\sigma_i\xi{\sigma_j}^{-1})(\mathbfit{e}_k,\mathbfit{e}_l) \right) \\
&=\sum_{\xi \in G}\left(\mathcal{W}_X^{\alpha, \xi}(u,v)\cdot \left(\Ind_H^G\rho(\xi)\right)\bigl(({\sigma_i}^{-1}\mathbfit{e}_k, {\sigma_j}^{-1}\mathbfit{e}_l)\bigr)\right) \\
&=\sum_{\xi \in G}\left(\biggl(\mathcal{W}_X^{\alpha, \xi}\otimes \left(\Ind_H^G\rho(\xi)\right)\biggr)\bigl((u, {\sigma_i}^{-1}\mathbfit{e}_k), (v, {\sigma_j}^{-1}\mathbfit{e}_l)\bigr)\right),
\end{align*}
which establishes the formula.
\end{proof}
\begin{theorem}[Induction Property]
Let $X$ be a $K$-valued vertex-weighted finite graph, let $\alpha\colon \mathbb{E}(X) \to G$ be a voltage assignment with a finite group $G$, and let $H$ be a subgroup of $G$.
We write $Z$ for the intermediate graph corresponding to $H$ given by \eqref{eq_22}.
Then 
\begin{equation}\label{eq_23}
h_Z^{\beta}(\rho, t)=h_X^{\alpha}\left(\Ind_H^G\rho, t\right)
\end{equation}
for every representation $\rho$ of $H$.
\end{theorem}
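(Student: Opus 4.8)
The plan is to unwind both $h$-functions straight from the definition and to match the resulting matrices term by term under the change of basis supplied by the preceding lemma. Writing $m=[G:H]$, the induced representation $\Ind_H^G\rho$ has degree $md_\rho$, and by definition
\[
h_X^{\alpha}(\Ind_H^G\rho, t)=\det\left(\bm{I}_{V(X)}\otimes \bm{I}_{md_{\rho}}-t\sum_{\xi \in G}\mathcal{W}_X^{\alpha,\xi}\otimes (\Ind_H^G\rho)(\xi)+t^2(\bm{D}_X-\bm{I}_{V(X)})\otimes \bm{I}_{md_\rho}\right)
\]
is the determinant of a matrix indexed by $V(X)$ together with the basis $\{{\sigma_i}^{-1}\bm{e}_k\}$ of $\Ind_H^G\rho$ fixed in the lemma. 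The identification $(u,{\sigma_i}^{-1}\bm{e}_k)\leftrightarrow ((u,H\sigma_i),\bm{e}_k)$ puts this index set in bijection with $V(Z)$ paired with the basis $\{\bm{e}_k\}$ of $\rho$, which is exactly the index set of the matrix defining $h_Z^{\beta}(\rho,t)$. It therefore suffices to show that, under this identification, the three summands of the two matrices agree.

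The linear-in-$t$ term is immediate from the lemma, which asserts precisely that $\sum_{\xi\in G}\mathcal{W}_X^{\alpha,\xi}\otimes(\Ind_H^G\rho)(\xi)$ and $\sum_{\tau\in H}\mathcal{W}_Z^{\beta,\tau}\otimes\rho(\tau)$ have equal entries once the two index sets are matched as above. The constant terms are identity matrices of the common size $md_\rho\cdot\#V(X)=d_\rho\cdot\#V(Z)$ and hence agree trivially.

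For the quadratic term I would first record that $\bm{D}_Z=\bm{D}_X\otimes\bm{I}_{H\backslash G}$, in parallel with \eqref{eq_12}: a vertex $(v,H\sigma)$ of $Z$ satisfies $o(e,H\sigma)=(o(e),H\sigma)$ and $w_{t(e,H\sigma)}=w_{t(e)}$, so $\bm{D}_Z((v,H\sigma),(v,H\sigma))=\sum_{e\colon o(e)=v}w_{t(e)}=\bm{D}_X(v,v)$. Consequently both $(\bm{D}_X-\bm{I}_{V(X)})\otimes\bm{I}_{md_\rho}$ and $(\bm{D}_Z-\bm{I}_{V(Z)})\otimes\bm{I}_{d_\rho}$ are diagonal, carrying the same entry $\bm{D}_X(u,u)-1$ at the matched indices $(u,{\sigma_i}^{-1}\bm{e}_k)\leftrightarrow((u,H\sigma_i),\bm{e}_k)$, so they coincide. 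Since all three summands agree, the two defining matrices differ only by the simultaneous row-and-column permutation realizing the identification of index sets; as conjugation by a permutation matrix preserves the determinant, the equality $h_Z^{\beta}(\rho,t)=h_X^{\alpha}(\Ind_H^G\rho, t)$ follows. The only genuinely delicate point is the bookkeeping: one must verify that the identification used for the $\mathcal{W}$-term in the lemma is the very one under which the diagonal constant and quadratic terms line up, after which the matching of all three terms is forced.
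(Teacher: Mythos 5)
Your proposal is correct and follows essentially the same route as the paper's proof: both unwind the two $h$-functions from the definition, invoke the preceding lemma to identify the linear-in-$t$ terms, and use the factorizations $\bm{I}_{V(Z)}=\bm{I}_{V(X)}\otimes\bm{I}_{H\backslash G}$ and $\bm{D}_Z=\bm{D}_X\otimes\bm{I}_{H\backslash G}$ together with $\bm{I}_{H\backslash G}\otimes\bm{I}_{d_\rho}=\bm{I}_{d_{\Ind_H^G\rho}}$ to match the constant and quadratic terms. If anything, you are slightly more explicit than the paper, which leaves the verification of $\bm{D}_Z=\bm{D}_X\otimes\bm{I}_{H\backslash G}$ and the compatibility of the index identifications implicit.
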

\begin{proof}
\eqref{eq_23} allows us to calculate 
\begin{align*}
h&_Z^{\beta}(\rho, t)\\
&=\det\left(
\mathbfit{I}_{V(Z)}\otimes \mathbfit{I}_{d_{\rho}}-t\cdot\sum_{\tau \in H}\mathcal{W}_Z^{\beta, \tau}\otimes \rho(\tau) +t^2\cdot \left(\mathbfit{D}_Z-\mathbfit{I}_{V(Z)}\right)\otimes \mathbfit{I}_{d_{\rho}}\right)\\
&=\det\left(
\mathbfit{I}_{V(X)}\otimes\mathbfit{I}_{H\backslash G}\otimes \mathbfit{I}_{d_{\rho}}-t\cdot\sum_{\xi \in G}\left(\mathcal{W}_X^{\alpha, \xi}\otimes \left(\Ind_H^G\rho(\xi)\right)\right) +t^2\cdot \left(\mathbfit{D}_X-\mathbfit{I}_{V(X)}\right)\otimes\mathbfit{I}_{H\backslash G}\otimes \mathbfit{I}_{d_{\rho}}\right) \\
&=\det\left(
\mathbfit{I}_{V(X)}\otimes \mathbfit{I}_{d_{\Ind_H^G\rho}}-t\cdot\sum_{\xi \in G}\left(\mathcal{W}_X^{\alpha, \xi}\otimes \left(\Ind_H^G\rho(\xi)\right)\right) +t^2\cdot \left(\mathbfit{D}_X-\mathbfit{I}_{V(X)}\right)\otimes \mathbfit{I}_{d_{\Ind_H^G\rho}}\right) \\
&=h_X^{\alpha}\left(\Ind_H^G\rho, t\right).
\end{align*}
This completes the proof.
\end{proof}
\section{Vertex-weighted Iwasawa's class number formula}\label{sec_4}
\subsection{Asymptotic theory of Cuoco and Monsky}\label{subsec_4.1}
In this subsection, we recall the Iwasawa $\mu$-invariant and the Iwasawa $\lambda$-invariant for a formal power series over a finite extension $K$ of $\mathbb{Q}_p$ following \cite{MR614400}, \cite{monsky}, and \cite[Section 3.2]{AMT}.

Fix an algebraic closure $\overbar{\mathbb{Q}}_p$ of $\mathbb{Q}_p$, and fix an embedding $\mathbb{Q}_p\hookrightarrow \overbar{\mathbb{Q}}_p$.
Let $K$ be a finite extension of $\mathbb{Q}_p$ with the valuation $\val_p$ normalized so that $\val_p(p)=1$, let $\mathcal{O}$ be the valuation ring of $K$, and let \glssymbol{Lambdad}$\Lambda_{d,\mathcal{O}}$ be the ring $\mathcal{O}\lBrack T_1,\dotsc, T_d \rBrack$ of formal power series.
Let \glssymbol{Gamma}$\Gamma$ be a multiplicative group isomorphic to $\mathbb{Z}_p^d$.
Recall that if we fix a basis $(\sigma_1,\dotsc, \sigma_d)$ for $\Gamma$, then we have an Iwasawa-Serre isomorphism $\mathcal{O}\lBrack \Gamma\rBrack \similarrightarrow \Lambda_{d,\mathcal{O}}$ that sends $\sigma_i$ to $1+T_{i}$.
Hence $\Gamma$ can be seen as a subgroup of $\Lambda_{d,\mathcal{O}}$ whose elements are of the form $(1+T_1)^{r_1}\cdots(1+T_d)^{r_d}$, $r_1,\dotsc, r_d \in \mathbb{Z}_p$.
Note that for $\sigma = \sigma_1^{a_1}\cdots\sigma_d^{a_d} \in \Gamma \setminus \Gamma^p$, $p \not\mid a_i$ for some $i$, and so $(\sigma_1,\dotsc, \sigma_{i-1},\sigma, \sigma_{i+1},\dotsc, \sigma_d)$ forms a basis for $\Gamma$.
Hence we obtain another Iwasawa-Serre isomorphism that sends $\sigma$ to $1+T_1$.
Let $\mathfrak{m}$ be the maximal ideal of $\mathcal{O}$, and let $\mathbb{F}_q$ be the finite field of order $q$ such that $\mathbb{F}_q\cong \mathcal{O}/\mathfrak{m}$.
We write \glssymbol{Omegad}$\Omega_{d,\mathcal{O}}\cong \mathbb{F}_q\lBrack T_1,\dotsc,T_d\rBrack$ for $\Lambda_{d,\mathcal{O}}/(\mathfrak{m}\Lambda_{d,\mathcal{O}}) $, and we write $(-)\overbar{\hphantom{I}}$ for the reduction map $\Lambda_{d,\mathcal{O}} \to \Omega_{d,\mathcal{O}}$.
Then the fact $\overbar{T}_1$ is a prime element of the unique factorization domain $\Omega_{d,\mathcal{O}}$ tells us that $(\sigma-1)\overbar{\hphantom{I}}$ is also a prime element of $\Omega_{d,\mathcal{O}}$.
Therefore, we can consider the order function $\ord_{\mathfrak{p}}\colon \Omega_{d,\mathcal{O}} \to  \mathbb{Z}_{\geq 0} \cup \{\infty\}$ associated to the prime ideal of the form $\mathfrak{p}=\left\langle (\sigma-1)\overbar{\hphantom{)}}\right\rangle$ for some element $\sigma \in \Gamma\setminus \Gamma^{p}$.
\begin{definition}
Let $\varpi$ be a uniformizer of the discrete valuation ring $\mathcal{O}$, and let $F$ be a non-zero element of $\Lambda_{d,\mathcal{O}} \otimes_{\mathcal{O}}K$.
Since $F$ is a finite sum of elements of the form $G\otimes \varpi^i$, where $G$ is an element of $\Lambda_{d,\mathcal{O}}$ and $i$ is an integer,
there exist a unique integer $N(F)$ and an unique element $F_0 \in \Lambda_{d,\mathcal{O}}$ such that $\varpi \not\mid F_0$ and $F=\varpi^{N(F)}F_0$.
Now we define the generalized Iwasawa invariants.
\begin{enumerate}[label=(\roman*)]
\item
The \emph{generalized Iwsawa \glssymbol{muF}$\mu$-invariant $\mu(F)$ of $F$} is the rational number defined by 
\begin{equation}
\mu(F)=\frac{N(F)}{e(K/\mathbb{Q}_p)},
\end{equation}
where $e(K/\mathbb{Q}_p)$ stands for the ramification index of $K/\mathbb{Q}_p$.
\item
The \emph{generalized Iwsawa \glssymbol{lambdaF}$\lambda$-invariant $\lambda(F)$ of $F$} is the non-negative integer defined by 
\begin{equation}
\lambda(F)=\sum\ord_{\mathfrak{p}}(F_0),
\end{equation}
where the sum extends over all prime ideals $\mathfrak{p}$ of the form $\mathfrak{p}=\left\langle (\sigma-1)\overbar{\hphantom{)}}\right\rangle$ for some element $\sigma \in \Gamma\setminus \Gamma^{p}$.
\end{enumerate}
\end{definition}
Now we state the asymptotic formula.
Let $W=\left\{\,\zeta \in \overbar{\mathbb{Q}}_p\;\middle|\; \text{$\zeta^{p^m}=1$ for some $m \in \mathbb{
Z}_{\geq 0}$}\,\right\}$, and let $W^d(n)=\left\{\,\mathbfit{\zeta}\in W^d\;\middle|\; \mathbfit{\zeta}^{p^n}=\mathbfit{1}\,\right\}$.
\begin{theorem}[{\cite[Theorem 3.5]{AMT}}]\label{thm_1}
Let $F$ be a non-zero element of $\Lambda_{d,\mathcal{O}}\otimes_{\mathcal{O}}K$.
Suppose that the function $\mathbfit{\zeta}\mapsto F(\mathbfit{\zeta}-\mathbfit{1})$ does not vanish on $W^d$.
Then there exist numbers $\mu_1,\dotsc,\mu_{d-1},$ $\lambda_1,\dotsc, \lambda_{d-1},$ $\nu \in\mathbb{Q}$ such that 
\begin{equation}\label{eq_32}
\sum_{\mathbfit{\zeta} \in W^d(n)\setminus \{\mathbfit{1}\}}\val_p\left(F\left(\mathbfit{\zeta}-\mathbfit{1}\right)\right)
=\left(\mu(F)\cdot p^n+\lambda (F) \cdot n\right)p^{(d-1)n}
+\sum_{i=1}^{d-1}\left(\mu_i\cdot p^n+\lambda_i\cdot n\right)p^{(d-1-i)n}
+\nu
\end{equation}
for all $n \gg 0$.
\end{theorem}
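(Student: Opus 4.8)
The plan is to follow the analytic method of Cuoco--Monsky underlying \cite[Theorem 3.5]{AMT}: separate the contribution of $\mu(F)$, extract the main $\lambda(F)$-term from the cyclotomic prime divisors of the reduction of $F$, and absorb everything else into the lower-order correction terms by an induction on $d$.

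First I would extract the $\mu$-part. Writing $F=\varpi^{N(F)}F_0$ with $\varpi\nmid F_0$, each summand splits as $\val_p(F(\bm{\zeta}-\bm{1}))=\mu(F)+\val_p(F_0(\bm{\zeta}-\bm{1}))$, because $\val_p(\varpi)=1/e(K/\mathbb{Q}_p)$ and $\mu(F)=N(F)/e(K/\mathbb{Q}_p)$. Since $\#\bigl(W^d(n)\setminus\{\bm{1}\}\bigr)=p^{dn}-1$, the total $\mu$-contribution is $\mu(F)(p^{dn}-1)=\mu(F)\,p^n\,p^{(d-1)n}-\mu(F)$, which supplies the leading term and a constant $-\mu(F)$ absorbed into $\nu$. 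It then remains to study $S_n:=\sum_{\bm{\zeta}\in W^d(n)\setminus\{\bm{1}\}}\val_p(F_0(\bm{\zeta}-\bm{1}))$, whose summands are finite by the standing hypothesis that $\bm{\zeta}\mapsto F(\bm{\zeta}-\bm{1})$ does not vanish on $W^d$, and whose reduction $\bar{F}_0\in\Omega_{d,\mathcal{O}}$ is non-zero.

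Next I would isolate the main term of order $n\,p^{(d-1)n}$. Since $\Omega_{d,\mathcal{O}}$ is a unique factorisation domain, only finitely many primes $\mathfrak{p}=\langle(\sigma-1)\bar{\hphantom{)}}\rangle$ with $\sigma\in\Gamma\setminus\Gamma^{p}$ divide $\bar{F}_0$; list them as $\mathfrak{p}_1,\dots,\mathfrak{p}_r$ with multiplicities $m_1,\dots,m_r$, so that $\lambda(F)=\sum_{j}m_j$. For each $j$ I choose a basis of $\Gamma$ whose first member $\sigma^{(j)}\in\Gamma\setminus\Gamma^{p}$ has $(\sigma^{(j)}-1)\bar{\hphantom{)}}$ generating $\mathfrak{p}_j$, so that the Iwasawa--Serre isomorphism identifies $(\sigma^{(j)}-1)\bar{\hphantom{)}}$ with $\bar{T}_1$ and $\bar{T}_1^{m_j}$ exactly divides $\bar{F}_0$; lifting gives $F_0=T_1^{m_j}H_j+\varpi R_j$ with $\bar{T}_1\nmid\bar{H}_j$. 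For a character $\psi\in\widehat{\Gamma}_n$ of sufficiently high level in the $\sigma^{(j)}$-direction, the leading part of $\val_p(F_0(\bm{\zeta}_{\psi}-\bm{1}))$ is $m_j\,\val_p(\psi(\sigma^{(j)})-1)$. Grouping the $p^{dn}$ characters by the value of $\psi(\sigma^{(j)})$, each value being taken by $p^{(d-1)n}$ characters, and using $\sum_{\zeta^{p^n}=1,\ \zeta\neq1}\val_p(\zeta-1)=\sum_{k=1}^{n}\phi(p^k)\cdot\tfrac{1}{\phi(p^k)}=n$, each direction contributes $m_j\,n\,p^{(d-1)n}$; summing over $j$ produces the main term $\lambda(F)\,n\,p^{(d-1)n}$.

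Finally I would control the error. The gap between $S_n$ and $\lambda(F)\,n\,p^{(d-1)n}$ comes from the saturation of $\val_p(F_0(\bm{\zeta}-\bm{1}))$ at low levels where $\val_p(\psi(\sigma^{(j)})-1)$ exceeds the $\varpi$-order, from the cofactors $H_j$ and the $\varpi R_j$ terms, from the non-cyclotomic prime divisors of $\bar{F}_0$, and from the characters lying on the loci where the distinguished degree drops. Each of these is supported on a finite union of cyclotomic loci of codimension at least one, each carrying $\sim p^{(d-1)n}$ characters organised into $p$-adic levels, so an induction on $d$ identifies their contributions with the lower-order terms $\sum_{i=1}^{d-1}(\mu_i p^n+\lambda_i n)p^{(d-1-i)n}$ and a final constant $\nu$. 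I expect this error control to be the genuine obstacle: upgrading the estimates to an \emph{exact} quasi-polynomial in $p^n$ and $n$ valid for all $n\gg0$ requires Monsky's precise count of how the zero divisor of $\bar{F}_0$ meets the cyclotomic points level by level, together with careful bookkeeping of the degenerations along the strata, and it is exactly the non-vanishing hypothesis on $W^d$ that keeps these counts finite and stabilises the relevant multiplicities.
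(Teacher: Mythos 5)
The first thing to note is that the paper does not prove this statement at all: Theorem \ref{thm_1} is imported wholesale from \cite[Theorem 3.5]{AMT}, and the paper's entire proof is the single line ``See \cite[Theorem 3.5]{AMT}.'' So your attempt necessarily goes beyond what the paper does, and to your credit the outline correctly identifies the strategy underlying the cited result (the asymptotic theory of Cuoco and Monsky, which is exactly what \S\ref{subsec_4.1} is named after). Your first step is also complete and correct: writing $F=\varpi^{N(F)}F_0$ with $\varpi\nmid F_0$ gives $\val_p(F(\bm{\zeta}-\bm{1}))=\mu(F)+\val_p(F_0(\bm{\zeta}-\bm{1}))$, and since $\#\left(W^d(n)\setminus\{\bm{1}\}\right)=p^{dn}-1$, this cleanly produces the leading term $\mu(F)p^n p^{(d-1)n}$ together with a constant absorbed into $\nu$.

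Nevertheless, as a proof the proposal has a genuine gap, and you essentially concede it yourself. First, the derivation of the term $\lambda(F)\,n\,p^{(d-1)n}$ rests on the unproved claim that for $\psi$ ``of sufficiently high level in the $\sigma^{(j)}$-direction'' the leading part of $\val_p(F_0(\bm{\zeta}_{\psi}-\bm{1}))$ is $m_j\val_p(\psi(\sigma^{(j)})-1)$. This is not an innocent reduction: lifting the factorization $\bar{F}_0=\prod_j\bigl((\sigma^{(j)}-1)\bar{\hphantom{)}}\bigr)^{m_j}\bar{G}$ to $F_0=\prod_j(\sigma^{(j)}-1)^{m_j}G+\varpi R$, one must show that the cofactor valuations $\val_p(G(\bm{\zeta}_{\psi}-\bm{1}))$ stay small outside a controlled exceptional set of characters --- and the fact that $\bar{G}$ is coprime to every cyclotomic prime does \emph{not} by itself bound these valuations; that finiteness/equidistribution statement is precisely Monsky's key lemma, which you never state or prove. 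Second, and decisively, the actual content of the theorem is not the leading asymptotics but the assertion that the remainder is an \emph{exact} quasi-polynomial $\sum_{i=1}^{d-1}(\mu_i p^n+\lambda_i n)p^{(d-1-i)n}+\nu$ valid for all $n\gg 0$; your final paragraph defers exactly this to ``Monsky's precise count'' and ``careful bookkeeping'' without formulating the inductive hypothesis on $d$, the stratification of $W^d(n)$ it would run over, or the level-by-level counts. Since that is where the entire difficulty of \cite[Theorem 3.5]{AMT} lies, what you have written is an accurate road map to the literature rather than a proof; if the intent is to match the paper, the honest move is to do what the paper does and cite \cite[Theorem 3.5]{AMT}, and if the intent is to reprove it, the missing lemma and the induction must be supplied.
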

\begin{proof}
See \cite[Theorem 3.5]{AMT}.
\end{proof}
\begin{remark}\label{rem_1}
When $d=1$, we can find a number $n_0$ such that \eqref{eq_32} holds for all $n\geq n_0$.
Choose a number $n_0$ so that $\varphi\bigl(p^{n_0}\bigr) > \lambda(F)\cdot e(K/\mathbb{Q}_p)$, where $\varphi$ is Euler's totient function, and then $n_0$ is a desired number.
See the proof of \cite[Theorem 2.1]{monsky} for more details.
\end{remark}
\subsection{A proof of Iwasawa-type formula}\label{subsec_4.2}
Fix an embedding $\mathbb{Q}_p\hookrightarrow \overbar{{\mathbb{Q}}}_p$, and fix a finite extension $K$ of $\mathbb{Q}_p$.
From now on, a \emph{vertex-weighted graph} means a $K$-valued weighted graph.
Let $X$ be a vertex-weighted finite graph, and choose a square root $\sqrt{w_v}$ for each $v \in V(X)$.
Let \(\zeta_p\) be a primitive \(p\)-th root of unity.
Consider $K'=K\left(\left\{\,\sqrt{w_v}\;|\;v\in V(X)\,\right\}\cup\{\zeta_p\}\right)$ and its valuation ring \(\mathcal{O}'\).
The reason why we add \(\zeta_p\) into \(K'\) is that, in Definitions 5.2 below, we will treat a series that has \(\zeta_p\) in its coefficients.
In this subsection, we prove an Iwasawa-type formula for vertex  weighted graphs.
Set \glssymbol{Gamman}$\Gamma_n=\Gamma/\Gamma^{p^n} \cong (\mathbb{Z}/p^n\mathbb{Z})^d$.
Let $\alpha\colon \mathbb{E}(X)\to \Gamma$ be a voltage assignment.
Then, using the natural map $\Gamma \to \Gamma_n$, we obtain voltage assignments $\alpha_n\colon \mathbb{E}(X) \to \Gamma_n$ for each $n \in \mathbb{Z}_{\geq 0}$, and hence the tower of covers 
\begin{equation}
X=X(\alpha_0) \leftarrow X(\alpha_1) \leftarrow X(\alpha_2)\leftarrow \cdots \leftarrow X(\alpha_n) \leftarrow \cdots \leftarrow X(\alpha),
\end{equation}
which is called the \emph{$\mathbb{Z}_p^d$-tower over $X$}.
Note that by \eqref{eq_14}, $\kappa_{(v, \sigma)}(X(\alpha_n))=\kappa_{(v, \tau)}(X(\alpha_n))$ for every $\sigma,\ \tau \in \Gamma_n$.
Henceforth we would rather write $\kappa_v(X(\alpha_n))$ than $\kappa_{(v, \sigma)}(X(\alpha_n))$.
\begin{definitions}\ 
\begin{enumerate}[label=(\roman*)]
\item
For $\sigma=\sigma_1^{a_1}\cdots\sigma_d^{a_d} \in \Gamma$, set
\glssymbol{bbtbma}$\mathbb{t}(\sigma)(T_1,T_2,\dotsc, T_d)=(1+T_1)^{a_1}(1+T_2)^{a_2}\cdots (1+T_d)^{a_d} \in \Lambda_{d,\mathcal{O}'}$.
\item
\glssymbol{bbWXalpha}$\mathbb{W}_X^{\alpha}(T_1,\dotsc, T_d)$ over $\Lambda_{d,\mathcal{O}'}\otimes_{\mathcal{O}'}K'$ whose $(u,v)$-entry is given by
\begin{equation}
(\mathbb{W}_X^{\alpha}(T_1,\dotsc, T_d))(u,v)=\sum_{\substack{e \in \mathbb{E}(X)\colon \\ o(e)=u,\ t(e)=v}}\mathbb{t}(\alpha(e))(T_1,\dotsc, T_d)\sqrt{w_u}\sqrt{w_v}.
\end{equation}
\item
We define the element \glssymbol{QXalpha}$Q_X^{\alpha}(T) \in \Lambda_{d,\mathcal{O}'}\otimes_{\mathcal{O}'}K'$ by
\begin{equation}
Q_X^{\alpha}(T_1,\dotsc, T_d)=\det \left(\mathbfit{D}_X - \mathbb{W}_X^{\alpha}(T_1,\dotsc, T_d)\right).
\end{equation}
\item For an irreducible representation (namely, a character) $\psi \in \widehat{\Gamma}_n=\hom(\Gamma_n, {\overbar{\mathbb{Q}}_p}^{\ast})$, we use the symbol \glssymbol{bmzetapsi}$\mathbfit{\zeta}_{\psi}$ to denote 
\begin{equation}
\mathbfit{\zeta}_{\psi}=\left(\psi(\overbar{\sigma}_1),\psi(\overbar{\sigma}_2),\dotsc, \psi(\overbar{\sigma}_d)\right)\in (\overbar{\mathbb{Q}}_p^*)^d.
\end{equation}
\end{enumerate}
\end{definitions}
\begin{proposition}\label{prop_1}
For $\psi\in \widehat{\Gamma}_n$, 
\begin{equation}\label{eq_20}
Q_X^{\alpha}\left(\mathbfit{\zeta}_{\psi}-\mathbfit{1}\right)=h_X^{\alpha_n}(\psi, 1).
\end{equation}
\end{proposition}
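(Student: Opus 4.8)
The plan is to reduce both sides to an explicit $\#V(X)\times\#V(X)$ determinant and to check that the two matrices inside the determinants coincide entrywise. First I would specialize the definition of $h_X^{\alpha_n}(\rho,t)$ to the case where $\rho=\psi$ is a character, so that $d_\psi=1$ and every Kronecker product with $\bm{I}_{d_\psi}=\bm{I}_1$ disappears, while each $\rho(\sigma)=\psi(\sigma)$ becomes a scalar. Evaluating at $t=1$, the two copies of $\bm{I}_{V(X)}$ (one from the constant term, one from the $t^2$-term $-\bm{I}_{V(X)}$) cancel, leaving
\[
h_X^{\alpha_n}(\psi,1)=\det\left(\bm{D}_X-\sum_{\sigma\in\Gamma_n}\psi(\sigma)\,\mathcal{W}_X^{\alpha_n,\sigma}\right).
\]
This already has the same shape $\det(\bm{D}_X-(\cdots))$ as $Q_X^\alpha=\det(\bm{D}_X-\mathbb{W}_X^\alpha)$, so it remains only to identify the two subtracted matrices.

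Second, I would compute the $(u,v)$-entry of $\sum_{\sigma\in\Gamma_n}\psi(\sigma)\mathcal{W}_X^{\alpha_n,\sigma}$. Unwinding the definition of $\mathcal{W}_X^{\alpha_n,\sigma}$ and interchanging the sum over $\sigma$ with the sum over edges collapses the constraint $\alpha_n(e)=\sigma$, giving
\[
\sum_{\substack{e\in\mathbb{E}(X)\colon\\ o(e)=u,\ t(e)=v}}\psi(\alpha_n(e))\,\sqrt{w_u}\sqrt{w_v}.
\]
In parallel, the $(u,v)$-entry of $\mathbb{W}_X^\alpha(\bm{\zeta}_\psi-\bm{1})$ is, by definition, the analogous edge-sum with $\psi(\alpha_n(e))$ replaced by $\mathbb{t}(\alpha(e))(\bm{\zeta}_\psi-\bm{1})$.

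Third---and this is the only step with any content---I would verify that $\mathbb{t}(\alpha(e))(\bm{\zeta}_\psi-\bm{1})=\psi(\alpha_n(e))$ for every edge $e$. Writing $\alpha(e)=\sigma_1^{a_1}\cdots\sigma_d^{a_d}$ and substituting $T_i=\psi(\bar{\sigma}_i)-1$, so that $1+T_i=\psi(\bar{\sigma}_i)$, the monomial $\mathbb{t}(\alpha(e))$ becomes $\prod_i\psi(\bar{\sigma}_i)^{a_i}$, which equals $\psi\bigl(\prod_i\bar{\sigma}_i^{a_i}\bigr)=\psi(\overline{\alpha(e)})=\psi(\alpha_n(e))$ because $\psi$ is a group homomorphism and $\alpha_n$ is $\alpha$ composed with the projection $\Gamma\to\Gamma_n$. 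With this, the two matrices $\sum_{\sigma}\psi(\sigma)\mathcal{W}_X^{\alpha_n,\sigma}$ and $\mathbb{W}_X^\alpha(\bm{\zeta}_\psi-\bm{1})$ agree entrywise; hence $\bm{D}_X-\mathbb{W}_X^\alpha(\bm{\zeta}_\psi-\bm{1})$ is exactly the matrix inside the determinant for $h_X^{\alpha_n}(\psi,1)$, and taking determinants yields \eqref{eq_20}.

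The argument is essentially a single bookkeeping identity, so I do not anticipate a genuine obstacle. The only points requiring care are tracking the multiplicativity of $\mathbb{t}$---namely that substituting $\bm{\zeta}_\psi-\bm{1}$ turns the formal monomial $\mathbb{t}(\alpha(e))$ into the character value $\psi(\alpha_n(e))$---and making sure the cancellation of $\bm{I}_{V(X)}$ at $t=1$ is invoked correctly when passing from the definition of $h_X^{\alpha_n}(\psi,t)$ to the simplified determinant.
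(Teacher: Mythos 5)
Your proposal is correct and follows essentially the same route as the paper's proof: the key identity $\mathbb{t}(\alpha(e))\left(\bm{\zeta}_{\psi}-\bm{1}\right)=\psi(\alpha_n(e))$ via multiplicativity of $\psi$, the entrywise identification $\mathbb{W}_X^{\alpha}\left(\bm{\zeta}_{\psi}-\bm{1}\right)=\sum_{\sigma\in\Gamma_n}\psi(\sigma)\,\mathcal{W}_X^{\alpha_n,\sigma}$, and then equality of determinants. Your treatment is if anything slightly more explicit than the paper's, since you spell out the cancellation of the two copies of $\bm{I}_{V(X)}$ at $t=1$ and the collapse of the Kronecker factors when $d_{\psi}=1$, which the paper leaves implicit in the step $\det\left(\bm{D}_X-\sum_{\sigma}\psi(\sigma)\otimes\mathcal{W}_X^{\alpha_n,\sigma}\right)=\det\left(\bm{H}_X^{\alpha_n}(\psi,1)\right)$.
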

\begin{proof}
In general, for every $\sigma=\sigma_1^{a_1}\cdots\sigma_d^{a_d} \in \Gamma$,
\begin{align*}
\mathbb{t}(\sigma)\left(\mathbfit{\zeta}_{\psi}-\mathbfit{1}\right)
&=(\psi(\overbar{\sigma}_1))^{a_1}\cdots(\psi(\overbar{\sigma}_d))^{a_d} \\
&=\psi\left({\overbar{\sigma}_1}^{a_1}\right)\cdots\psi\left({\overbar{\sigma}_d}^{a_d}\right) \\
&=\psi\left(\overbar{\sigma}\right).
\end{align*}
Hence $\mathbb{t}(\alpha(e))\left(\mathbfit{\zeta}_{\psi}-\mathbfit{1}\right)=\psi(\alpha_n(e))$, and for $(u,v)$-entry of $\mathbb{W}_X^{\alpha}(\mathbfit{\zeta}_{\psi}-\mathbfit{1})$,
\begin{align*}
\mathbb{W}_X^{\alpha}(\mathbfit{\zeta}_{\psi}-\mathbfit{1})(u,v)
&=\sum_{\substack{e \in \mathbb{E}(X)\colon \\ o(e)=u, \\ t(e)=v}}\psi(\alpha_n(e))\sqrt{w_u}\sqrt{w_v} \\
&=\sum_{\sigma \in \Gamma_n}\psi(\sigma)\sum_{\substack{e \in \mathbb{E}(X)\colon \alpha_n(e)=\sigma \\ o(e)=u,\ t(e)=v}}\sqrt{w_u}\sqrt{w_v} \\
&=\sum_{\sigma \in \Gamma_n}\psi(\sigma)\cdot\mathcal{W}_X^{\alpha_n}(u,v),
\end{align*}
which implies that $\mathbb{W}_X^{\alpha}\left(\mathbfit{\zeta}_{\psi}-\mathbfit{1}\right)=\sum_{\sigma \in \Gamma_n} \psi(\sigma)\otimes \mathcal{W}_X^{\alpha_n}$.
Therefore,
\begin{align*}
Q_X^{\alpha}\left(\mathbfit{\zeta}_{\psi}-\mathbfit{1}\right) 
& =\det\left(\mathbfit{D}_X - \sum_{\sigma \in \Gamma_n} \psi(\sigma)\otimes \mathcal{W}_X^{\alpha_n}\right)\\
&=\det\left(\mathbfit{H}_X^{\alpha_n}(\psi, 1)\right)\\
&=h_X^{\alpha_n}(\psi, 1),
\end{align*}
which establishes the formula.
\end{proof}
Now we give an Iwasawa-type formula for rooted weighted complexities, which is an analogue of \cite[Theorem 3.9]{AMT}.
As was the case with \cite[Theorem 3.9]{AMT}, this is a generalization of \cite[Theorem 6.2]{DV23} and \cite[Theorem 4.3]{KM24}.
%and 
%%%%%%%%%%%%%%%%%%%%%%%%%%%根付き岩澤型公式
\begin{theorem}[Iwasawa-type formula for rooted weighted complexities]\label{thm_2}
Let $X$ be a vertex-weighted finite graph, and let $\alpha\colon \mathbb{E}(X) \to \Gamma$ be a voltage assignment.
We assume that all the $X(\alpha_n)$ are connected.
Then we have the following.
\begin{enumerate}[label=$(\arabic*)$]
\item If there exists a non-negative integer $n_0$ such that $\kappa_v(X(\alpha_{n_0}))=0$, then $\kappa_v(X(\alpha_n))=0$ for all $n \geq n_0$.
\item 
If $\kappa_v(X(\alpha_n)) \neq 0$ for all $n$, then there exist numbers $\mu_1,\dotsc,\mu_{d-1},\ \lambda_1,\dotsc, \lambda_{d-1},\ \nu \in \mathbb{Q}$ such that
\begin{equation}
\val_p(\kappa_v(X(\alpha_n))) = \left(\mu_v(X, \alpha)p^n+\lambda_v(X, \alpha)  n\right)p^{(d-1)n}+\left(\sum_{i=1}^{d-1}\left(\mu_i  p^n+\lambda_i n\right)p^{(d-i-1)n}\right)+\nu
\end{equation}
for all $n \gg 0$, where \glssymbol{muvXalpha}$\mu_v(X, \alpha)=\mu\left(Q_X^{\alpha}\right)$ and where
\[
\glssymbol{lambdavXalpha}\lambda_v(X, \alpha) =
\begin{cases}
\lambda\left(Q_X^{\alpha}\right)-1 & \text{if $d=1$,} \\
\hfill \lambda\left(Q_X^{\alpha}\right) \hfill\hfill & \text{if $d \geq 2$}.
\end{cases}
\]
\end{enumerate}
The value $\mu_v(X,\alpha)$ is called the \emph{rooted Iwasawa $\mu$-invariant of $(X,\alpha)$ at $v$}, and the value $\lambda_v(X,\alpha)$ is called the \emph{rooted Iwasawa $\lambda$-invariant of $(X,\alpha)$ at $v$}.
\end{theorem}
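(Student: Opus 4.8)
The plan is to feed the power series $Q_X^{\alpha}$ into the asymptotic machine of Theorem~\ref{thm_1}, after rewriting $\kappa_v(X(\alpha_n))$ via the decomposition formula of Theorem~\ref{thm_6}. Since $\Gamma_n \cong (\mathbb{Z}/p^n\mathbb{Z})^d$ is a finite abelian group and $K'$ has characteristic $0$, every irreducible representation of $\Gamma_n$ is a character $\psi \in \widehat{\Gamma}_n$ of degree $1$, so Theorem~\ref{thm_6} applied to $G = \Gamma_n$ gives
\begin{equation*}
\kappa_v(X(\alpha_n)) = \frac{\kappa_v(X)}{p^{dn}} \prod_{\psi \in \widehat{\Gamma}_n \setminus \{\triv\}} h_X^{\alpha_n}(\psi, 1).
\end{equation*}
First I would invoke Proposition~\ref{prop_1} to replace each factor by $h_X^{\alpha_n}(\psi,1) = Q_X^{\alpha}(\bm{\zeta}_{\psi} - \bm{1})$, and observe that $\psi \mapsto \bm{\zeta}_{\psi}$ is a bijection from $\widehat{\Gamma}_n$ onto $W^d(n)$ carrying $\triv$ to $\bm{1}$. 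This turns the product into $\prod_{\bm{\zeta} \in W^d(n) \setminus \{\bm{1}\}} Q_X^{\alpha}(\bm{\zeta} - \bm{1})$, so that the arithmetic of $\kappa_v(X(\alpha_n))$ is controlled by the values of $Q_X^{\alpha}$ on $W^d(n) \setminus \{\bm{1}\}$ together with the scalar $\kappa_v(X)/p^{dn}$.

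For part $(1)$, if $\kappa_v(X) = 0$ then the product formula forces $\kappa_v(X(\alpha_n)) = 0$ for every $n$ (note $\kappa_v(X) = \kappa_v(X(\alpha_0))$), so we may assume $\kappa_v(X) \neq 0$. In that case $\kappa_v(X(\alpha_{n_0})) = 0$ means $Q_X^{\alpha}(\bm{\zeta} - \bm{1}) = 0$ for some $\bm{\zeta} \in W^d(n_0) \setminus \{\bm{1}\}$. Since $W^d(n_0) \subseteq W^d(n)$ for every $n \geq n_0$, this same factor occurs in the product at level $n$, whence $\kappa_v(X(\alpha_n)) = 0$ for all $n \geq n_0$.

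For part $(2)$, the hypothesis that $\kappa_v(X(\alpha_n)) \neq 0$ for all $n$ is, via the product formula, equivalent to $\kappa_v(X) \neq 0$ together with $Q_X^{\alpha}(\bm{\zeta} - \bm{1}) \neq 0$ for every $\bm{\zeta} \in W^d \setminus \{\bm{1}\} = \bigcup_n \bigl(W^d(n) \setminus \{\bm{1}\}\bigr)$. In particular $Q_X^{\alpha}$ is a non-zero element of $\Lambda_{d,\mathcal{O}'} \otimes_{\mathcal{O}'} K'$ that does not vanish on $W^d \setminus \{\bm{1}\}$, which is exactly the set over which the sum in Theorem~\ref{thm_1} ranges (the value at $\bm{1}$, namely $Q_X^{\alpha}(\bm{0}) = \det \mathcal{L}_X = 0$, plays no role). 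Applying Theorem~\ref{thm_1} to $F = Q_X^{\alpha}$ over the finite extension $K'$ of $\mathbb{Q}_p$, taking $\val_p$ of the product formula, and substituting the expansion yields
\begin{equation*}
\val_p(\kappa_v(X(\alpha_n))) = \val_p(\kappa_v(X)) - dn + \bigl(\mu(Q_X^{\alpha}) p^n + \lambda(Q_X^{\alpha}) n\bigr) p^{(d-1)n} + \sum_{i=1}^{d-1} (\mu_i p^n + \lambda_i n) p^{(d-1-i)n} + \nu
\end{equation*}
for all $n \gg 0$.

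The remaining work, and the only delicate point, is bookkeeping the term $-dn$ coming from the denominator $\# \Gamma_n = p^{dn}$, which is precisely the source of the case distinction. When $d \geq 2$, the term $-dn$ has strictly smaller order than the leading term $\bigl(\mu(Q_X^{\alpha})p^n + \lambda(Q_X^{\alpha})n\bigr)p^{(d-1)n}$: it is of the shape $\lambda' n \cdot p^{(d-1-i)n}$ with $i = d-1$, so it can be absorbed into the coefficient $\lambda_{d-1}$ of the lower-order sum, leaving $\mu_v(X,\alpha) = \mu(Q_X^{\alpha})$ and $\lambda_v(X,\alpha) = \lambda(Q_X^{\alpha})$ untouched while the constant $\val_p(\kappa_v(X))$ is absorbed into $\nu$. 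When $d = 1$ there is no lower-order sum and $p^{(d-1)n} = 1$, so $-dn = -n$ has the same order as the leading $\lambda(Q_X^{\alpha}) n$ term; it therefore shifts the $\lambda$-invariant, producing $\lambda_v(X,\alpha) = \lambda(Q_X^{\alpha}) - 1$ with $\mu_v(X,\alpha) = \mu(Q_X^{\alpha})$ again unchanged. This order analysis is exactly what yields the two cases in the statement, and it is the one step that must be carried out with care.
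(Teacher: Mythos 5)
Your proof is correct and is essentially the paper's own argument: decompose $\kappa_v(X(\alpha_n))$ by Theorem \ref{thm_6} with $G = \Gamma_n$, rewrite each factor as $Q_X^{\alpha}(\bm{\zeta}_{\psi}-\bm{1})$ via \eqref{eq_20}, apply Theorem \ref{thm_1} to $F = Q_X^{\alpha}$, and absorb the term $-dn$ coming from $\#\Gamma_n = p^{dn}$ into the coefficient $\lambda_{d-1}$ of the lower-order sum when $d \geq 2$, or into the leading $\lambda$ when $d = 1$, which is precisely what produces the case split in the statement. The only cosmetic differences are in part (1), where the paper applies \eqref{eq_14} to the intermediate covers $X(\alpha_n)/X(\alpha_{n_0})$ instead of tracking a persisting zero factor of the level-$n$ product, and in your explicit (and welcome) remark that the vanishing $Q_X^{\alpha}(\bm{0}) = \det\left(\mathcal{L}_X\right) = 0$ at $\bm{\zeta} = \bm{1}$ is harmless, a subtlety the paper's proof glosses over when it asserts that $Q_X^{\alpha}$ is non-vanishing on all of $W^d$.
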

\begin{proof}[Proof of $(1)$]
By \eqref{eq_14}, for each $n \geq n_0$, there exists a number $c_n \in \overbar{\mathbb{Q}}_p$ such that
$\kappa_v(X(\alpha_n))=c_n\cdot \kappa_v(X(\alpha_{n_0}))$.
Hence, when $\kappa_v(X(\alpha_{n_0}))=0$, we have $\kappa_v(X(\alpha_n))=0$, which is our assertion.
\end{proof}
\begin{proof}[Proof of $(2)$]
The assumption $\kappa_v(X(\alpha_n)) \neq 0$ for every $n$ implies that $h_X^{\alpha_n}(\psi, 1) \neq 0$ for every $n$ and for every $\psi \in \widehat{\Gamma}_n \setminus \bigl\{\mathrm{triv}_{\Gamma_n}\bigr\}$ by \eqref{eq_14}, which means that $Q_X^{\alpha}$ is non-vanishing on $W^d$ by $\eqref{eq_20}$.
Hence, by \eqref{eq_14} again, for $n \gg 0$,
\begin{align*}
\val&_p(\kappa_v(X(\alpha_n))) \\
&= \val_p(\kappa_v(X))-\val_p(\#\Gamma_n)+\raisebox{-.8ex}{\bigg(}\sum_{\psi \in \widehat{\Gamma}_n \setminus \{\mathrm{triv}_{\Gamma_n}\}}\val_p\left(h_X^{\alpha_n}(\psi, 1)\right)\raisebox{-.8ex}{\bigg)} \\
&= \val_p(\kappa_v(X))-nd+\raisebox{-.8ex}{\bigg(}\sum_{\mathbfit{\zeta}\in W^d(n)\setminus \{\mathbfit{1}\}}\val_p\left(Q_X^{\alpha}\left(\mathbfit{\zeta}-\mathbfit{1}\right)\right)\raisebox{-.8ex}{\bigg)} \quad\text{(Apply \eqref{eq_20}.)} \\
&=
\val_p(\kappa_v(X))-nd+
\left(
\begin{gathered}
\left(\mu(Q_X^{\alpha})p^n+\lambda (Q_X^{\alpha}) n\right)p^{(d-1)n}\\
+\sum_{i=1}^{d-1}\left(\mu_ip^n+\lambda_i n\right)p^{(d-1-i)n}
+\nu
\end{gathered} 
\right)
\quad\text{(Apply Theorem \ref{thm_1}.)} \\
&=
\begin{cases}
\left(\mu(Q_X^{\alpha})p^n+\lambda (Q_X^{\alpha}) n\right)p^{(d-1)n}
+
\left(
\begin{gathered}
\sum_{i=1}^{d-2}\left(\mu_ip^n+\lambda_i n\right)p^{(d-1-i)n} \\
+(\mu_{d-1}+(\lambda_{d-1}-d)n)
\end{gathered}
\right)
+\left(\nu+\val_p(\kappa_v(X))\right) & \text{if $d \geq 2$,}\\
\hfill
\mu(Q_X^{\alpha})p^n+\left(\lambda (Q_X^{\alpha})-1\right) n +\left(\nu+\val_p(\kappa_v(X))\right) \hfill\hfill&\text{if $d=1$,}
\end{cases}
\end{align*}
which establishes the formula.
\end{proof}
%%%%%%%%%%%%%%%%%%%%%%%%%%%岩澤型公式
\begin{theorem}[Iwasawa-type formula for weighted complexities]\label{thm_3}
Let $X$ be a vertex-weighted finite graph, and let $\alpha\colon \mathbb{E}(X) \to \Gamma$ be a voltage assignment.
We assume that all the $X(\alpha_n)$ are connected.
Then we have the following.
\begin{enumerate}[label=$(\arabic*)$]
\item If there exists a non-negative integer $n_0$ such that $\kappa(X(\alpha_{n_0}))=0$, then $\kappa(X(\alpha_n))=0$ for all $n \geq n_0$.
\item 
If $\kappa(X(\alpha_n)) \neq 0$ for all $n$, then there exist numbers $\mu_1,\dotsc,\mu_{d-1},\ \lambda_1,\dotsc, \lambda_{d-1},\ \nu \in \mathbb{Q}$ such that
\begin{equation}\label{eq_33}
\val_p(\kappa(X(\alpha_n))) = \left(\mu(X,\alpha) p^n+\lambda(X, \alpha) n\right)p^{(d-1)n}+\left(\sum_{i=1}^{d-1}\left(\mu_i p^n+\lambda_i n\right)p^{(d-1-i)n}\right)+\nu
\end{equation}
for all $n \gg 0$, where \glssymbol{muXalpha}$\mu(X,\alpha)=\mu\left(Q_X^{\alpha}\right)$ and where \glssymbol{lambdaXalpha}$\lambda(X,\alpha)=\lambda\left(Q_X^{\alpha}\right)$.
\end{enumerate}
The value $\mu(X,\alpha)$ is called the \emph{Iwasawa $\mu$-invariant of $(X,\alpha)$}, and the value $\lambda$ is called the \emph{Iwasawa $\lambda(X, \alpha)$-invariant of $(X,\alpha)$}.
\end{theorem}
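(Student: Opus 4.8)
The plan is to run the same machine as in Theorem~\ref{thm_2}(2), but to feed it the \emph{unrooted} decomposition \eqref{eq_21} in place of the rooted one \eqref{eq_14}. The decisive feature is that \eqref{eq_21} carries no $\#\Gamma_n$ denominator, so the $-nd$ correction term that forced the case distinction for $\lambda_v$ in Theorem~\ref{thm_2} never arises, and we should obtain $\lambda(X,\alpha)=\lambda(Q_X^\alpha)$ uniformly in $d$. For part $(1)$, I would apply \eqref{eq_21} to the Galois cover $X(\alpha_n)/X(\alpha_{n_0})$, whose Galois group is the finite abelian group $\Gamma^{p^{n_0}}/\Gamma^{p^n}$, to obtain $\kappa(X(\alpha_n))=\kappa(X(\alpha_{n_0}))\cdot\prod_{\rho}\bigl(h(\rho,1)\bigr)^{d_\rho}$ for the relevant $h$-functions. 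Hence the vanishing $\kappa(X(\alpha_{n_0}))=0$ propagates to $\kappa(X(\alpha_n))=0$ for every $n\geq n_0$, exactly as the multiplicativity of \eqref{eq_14} drives part $(1)$ of Theorem~\ref{thm_2}.

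For part $(2)$, I would first note that since $\Gamma_n$ is abelian every $\psi\in\widehat{\Gamma}_n$ has degree one, so \eqref{eq_21} for the cover $X(\alpha_n)/X$ reads $\kappa(X(\alpha_n))=\kappa(X)\prod_{\psi\in\widehat{\Gamma}_n\setminus\{\triv\}}h_X^{\alpha_n}(\psi,1)$. The hypothesis $\kappa(X(\alpha_n))\neq 0$ for all $n$ forces $\kappa(X)\neq 0$ and $h_X^{\alpha_n}(\psi,1)\neq 0$ for every $n$ and every nontrivial $\psi$; by Proposition~\ref{prop_1} these are exactly the values $Q_X^\alpha(\bm{\zeta}_\psi-\bm{1})$, which as $n$ and $\psi$ range exhaust $Q_X^\alpha(\bm{\zeta}-\bm{1})$ over $\bm{\zeta}\in W^d\setminus\{\bm{1}\}$, so $Q_X^\alpha$ is non-vanishing in the sense needed to invoke Theorem~\ref{thm_1}. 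Taking $\val_p$ of \eqref{eq_21} and substituting \eqref{eq_20} then gives
\[
\val_p(\kappa(X(\alpha_n)))=\val_p(\kappa(X))+\sum_{\bm{\zeta}\in W^d(n)\setminus\{\bm{1}\}}\val_p\bigl(Q_X^\alpha(\bm{\zeta}-\bm{1})\bigr),
\]
and feeding the sum into Theorem~\ref{thm_1} produces the leading term $(\mu(Q_X^\alpha)p^n+\lambda(Q_X^\alpha)n)p^{(d-1)n}$ together with the lower-order terms, while the constant $\val_p(\kappa(X))$ is absorbed into $\nu$. This is precisely \eqref{eq_33} with $\mu(X,\alpha)=\mu(Q_X^\alpha)$ and $\lambda(X,\alpha)=\lambda(Q_X^\alpha)$.

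There is no genuine analytic obstacle here, since everything is pulled back from Theorem~\ref{thm_1}; the only point that truly needs care, and the one I would flag as the crux, is the bookkeeping that separates this statement from Theorem~\ref{thm_2}. In the rooted case \eqref{eq_14} contributes the extra factor $1/\#\Gamma_n=p^{-nd}$, i.e.\ a summand $-nd$ after taking $\val_p$; being of the shape $n\cdot p^{(d-1-(d-1))n}$, it merges harmlessly into the $\lambda_{d-1}$ coefficient when $d\geq 2$ but, the auxiliary sum being empty, it shifts the leading $\lambda$ by $-1$ when $d=1$. Since \eqref{eq_21} has no such factor, that term is simply absent, the leading $\lambda$-invariant is untouched, and no division into cases occurs. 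The last thing I would verify is that the non-vanishing argument really covers \emph{every} $\bm{\zeta}\in W^d\setminus\{\bm{1}\}$ rather than a merely cofinite set, which it does because each such $\bm{\zeta}$ already appears at the finite level $n$ with $\bm{\zeta}\in W^d(n)$.
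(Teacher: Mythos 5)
Your proposal matches the paper's own proof essentially step for step: part $(1)$ propagates the vanishing of $\kappa(X(\alpha_{n_0}))$ through the multiplicative decomposition \eqref{eq_21} exactly as the paper does (mirroring Theorem \ref{thm_2}(1)), and part $(2)$ deduces non-vanishing of $Q_X^{\alpha}$ on $W^d\setminus\{\bm{1}\}$ from \eqref{eq_21} together with \eqref{eq_20}, takes $\val_p$ of \eqref{eq_21}, invokes Theorem \ref{thm_1}, and absorbs $\val_p(\kappa(X))$ into $\nu$. Your closing observation that the absence of the $1/\#\Gamma_n$ factor eliminates the $-nd$ term and hence the case distinction on $d$ is precisely the point the paper itself emphasizes.
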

\begin{proof}[Proof of $(1)$]
Using \eqref{eq_21}, the proof is similar to the proof in Theorem \ref{thm_2} (1).
\end{proof}
\begin{proof}[Proof of $(2)$]
The assumption $\kappa(X(\alpha_n)) \neq 0$ for every $n$ implies that $h_X^{\alpha_n}(\psi, 1) \neq 0$ for every $n$ and for every $\psi \in \widehat{\Gamma}_n \setminus \{\mathrm{triv}_{\Gamma_n}\}$ by \eqref{eq_21},  which means that $Q_X^{\alpha}(T)$ is non-vanishing on $W^d$ by $\eqref{eq_20}$.
So, by \eqref{eq_21} again, for $n \gg 0$,
\begin{align*}
\val_p&(\kappa(X(\alpha_n))) \\
&= \val_p(\kappa(X))+\raisebox{-.8ex}{\bigg(}\sum_{\psi \in \widehat{\Gamma}_n \setminus \{\mathrm{triv}_{\widehat{\Gamma}_n}\}}\val_p\left(h_X^{\alpha_n}(\psi, 1)\right)\raisebox{-.8ex}{\bigg)} \\
&= \val_p(\kappa(X))+\raisebox{-.8ex}{\bigg(}\sum_{\mathbfit{\zeta}\in W^d(n)\setminus \{\mathbfit{1}\}}\val_p\left(Q_X^{\alpha}\left(\mathbfit{\zeta}-\mathbfit{1}\right)\right)\raisebox{-.8ex}{\bigg)}\quad\text{(Apply \eqref{eq_20}.)} \\
&=
\val_p(\kappa(X))+
\left(
\begin{gathered}
\left(\mu(Q_X^{\alpha})p^n+\lambda (Q_X^{\alpha}) n\right)p^{(d-1)n}\\
+\sum_{i=1}^{d-1}\left(\mu_ip^n+\lambda_i n\right)p^{(d-1-i)n}
+\nu
\end{gathered} 
\right)
\quad\text{(Apply Theorem \ref{thm_1}.)} \\
&=\left(\mu(Q_X^{\alpha})p^n+\lambda (Q_X^{\alpha}) n\right)p^{(d-1)n}
+
\left(
\sum_{i=1}^{d-1}\left(\mu_ip^n+\lambda_i n\right)p^{(d-1-i)n} 
\right)
+\left(\nu+\val_p(\kappa(X))\right),
\end{align*}
which establishes the formula.
\end{proof}
\begin{example}\label{ex_2}
We give an example for Theorem \ref{thm_3}.
Consider the vertex-weighted graph $X$ and the voltage assignment $\alpha\colon \mathbb{E}(X) \to \mathbb{Z}_2$ in Figure \ref{fig_101}.
\begin{figure}
\centering
\begin{minipage}{8em}
\centering
\begin{tikzpicture}[scale=1]
\coordinate[label=below:$v_1$,draw, circle, inner sep=2pt, fill=black](v1)at(0,0);
\coordinate[label=above left:$v_2$,draw, circle, inner sep=2pt, fill=black](v2)at({2*cos(pi/3 r)}, {2*sin(pi/3 r)});
\coordinate[label=below:$v_3$,draw, circle, inner sep=2pt, fill=black](v3)at(2,0);

\draw(v1)[->, very thick] to  node[above left]{$e_1$} (v2);
\draw(v3)[->, very thick] to  node[below]{$e_3$} (v1);
\draw(v2)[->, very thick]  to node[right]{$e_2$} (v3);
\draw(v1)[->, very thick]to[out=210, in=510, loop] node[left]{$e_4$} ();
\end{tikzpicture}
\end{minipage}
\hspace{3em}
\begin{minipage}{12em}
The weights of $X$ is given by
\begin{flushleft}
$
\begin{cases}
w_{v_1}=\sqrt{2},\\
w_{v_2}=1,\\
w_{v_3}=1.
\end{cases}
$
\end{flushleft}
\phantom{$($}
\end{minipage}
\begin{minipage}{0.03\columnwidth}
\phantom{a}
\end{minipage}
\begin{minipage}{11em}
$\alpha \colon \mathbb{E}(X)\to \mathbb{Z}_2$ is given by
\begin{flushleft}
$
\begin{cases}
\alpha(e_1)=1, \\\alpha(e_2)=0, \\
\alpha(e_3)=0, \\\alpha(e_4))=1.
\end{cases}
$
\end{flushleft}
\end{minipage}
\caption{The vertex-weighted graph $X$ and the voltage assignment $\alpha$}\label{fig_101}
\end{figure}
Since \(e_4\) is a loop and \(\alpha_n(e_4)=1\) generates \(\mathbb{Z}/2^n\mathbb{Z}\), all the $X(\alpha_n)$ are connected.
Since 
\begin{align*}
Q_X^{\alpha}(T)&=\det
\begin{pmatrix}
2+2\sqrt{2}-\sqrt{2}(1+T)-\sqrt{2}(1+T)^{-1} & -\sqrt[4]{2}(1+T) & -\sqrt[4]{2} \\
-\sqrt[4]{2}(1+T)^{-1} & 1+\sqrt{2} & -1 \\
-\sqrt[4]{2}&-1&1+\sqrt{2}
\end{pmatrix}\\
&=-\sqrt{2}(1+\sqrt{2})^2(1+T)^{-1}T^2,
\end{align*}
we have \(\mu(Q_X^{\alpha}(T))=1/2\), \(\lambda(Q_X^{\alpha})=2\), and hence \(\mu_{v_1}(X,\alpha)=1/2\), \(\lambda_{v_1}(X,\alpha)=1\).
According to Theorem \ref{thm_6}, letting \(\zeta_{2^n}\) be a primitive \(2^n\)-th root of unity, we can calculate the rooted complexity at \(v_1\) as
\begin{align}
\kappa_{v_1}(X(\alpha_n))&=\frac{\kappa_{v_1}(X)}{2^n}\prod_{i=1}^{2^n-1}3\sqrt{2}(2-\zeta_{2^n}^i-\zeta_{2^n}^{-i})\\
&=\frac{2+2\sqrt{2}}{2^n}(3\sqrt{2})^{2^n-1}\prod_{i=1}^{2^n-1}(1-\zeta_{2^n}^i)(1-\zeta_{2^n}^{-i})\\
&= \frac{1}{2^{n-1}}(1+\sqrt{2})(3\sqrt{2})^{2^n-1}\bigg(\prod_{i=1}^{2^n-1}\big(1-\zeta_{2^n}^i\big)\bigg)^2\\
&=(1+\sqrt{2})(3\sqrt{2})^{2^n-1}2^{n+1}.
\end{align}
Therefore, we have \(\val_2(\kappa_{v_1}(X(\alpha_n)))=(1/2)2^n+n+(1/2)\), which agrees with our Iwasawa-type formula for \(X\).
In the same manner, for the non-rooted version, we also deduce that \(\val_2(\kappa(X(\alpha_n)))=(1/2)2^n+2n+(1/2)\) while \(\mu(X,\alpha)=\mu(Q_X^{\alpha})=1/2\), \(\lambda(X,\alpha)=\lambda(Q_X^{\alpha})=2\).
Note that we have \(\nu = 1/2\), and hence the Iwasawa \(\nu\)-invariant of a \(\mathbb{Z}_p\)-tower may in fact be a rational number.
\end{example}
\section{Vertex-weighted Kida's formula}\label{sec_5}
In this section, we establish Kida's formula for vertex-weighted graphs.
For simplicity, we shall write \(\mu_v(X)\), \(\lambda_v(X)\), \(\mu(X)\), \(\lambda(X)\) for \(\mu_v(X,\alpha)\), \(\lambda_v(X,\alpha)\), \(\mu(X,\alpha)\), \(\lambda(X,\alpha)\), respectively.
\begin{theorem}[Kida's formula for vertex-weighted graphs]\label{thm_40}
Let $X$ and $Y$ be vertex-weighted graphs, let $\alpha\colon \mathbb{E}(X) \to \Gamma$ and $\alpha'\colon \mathbb{E}(Y) \to \Gamma$ be voltage assignments, and let $\pi\colon Y \to X$ be a Galois cover of $p$-power degree such that $\Gal(Y(\alpha'_n)/X)\cong \Gal(Y/X)\times \Gamma_n$.
Figure \ref{fig_6} illustrates our setting.
\begin{figure}[t]\label{fig_6}
\begin{tikzcd}[row sep=10mm]
Y \arrow[->, d, "\pi"']&Y(\alpha'_1) \arrow[->, l] \arrow[->, dl]&Y(\alpha'_2) \arrow[->, l] \arrow[->, dll, shorten > =2mm]& \cdots \arrow[->, l] &Y(\alpha'_n) \arrow[->, l] \arrow[->,dllll, shorten > = 6mm]& \cdots \arrow[->, l] & Y(\alpha') \arrow[->, l] \\
X& X(\alpha_1) \arrow[->, l]&X(\alpha_2) \arrow[->, l] & \cdots \arrow[->, l] & X(\alpha_n) \arrow[->, l]& \cdots \arrow[->, l] &X(\alpha). \arrow[->, l]
\end{tikzcd}
\caption{Two $\mathbb{Z}_p$-towers. The $n$-th vertical arrow is $\Gal(Y/X)\times \Gamma_n$-cover.}
\end{figure}
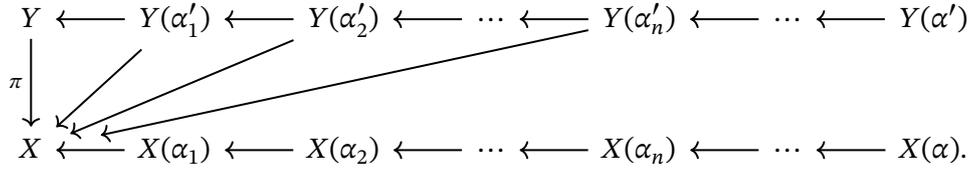
\noindent
If $(a)$, either $(b)$ or $(b)'$, and either $(c)$ or $(c)'$ 
\begin{enumerate}[leftmargin=39pt]
\item[$(a)\phantom{'}$] All the $Y(\alpha'_n)$ are connected.
\item[$(b)\phantom{'}$] All the $\kappa_v(Y(\alpha'_n))$ are non-zero for some fixed $v \in V(X)$. 
\item[$(b)'$] All the $\kappa(Y(\alpha'_n))$ are non-zero.
\item[$(c)\phantom{'}$] $\val_p(w_v) \geq \mu(X)/\#V(X) $ for every $v \in V(X)$.
\item[$(c)'$] $\val_p(w_v) \geq \mu(Y)/\#V(Y) $ for every $v \in V(Y)$.
\end{enumerate}
are satisfied, then the following $(1)$ and $(2)$ hold.
\begin{enumerate}[label=$(\arabic*)$]
\item 
$\mu\left(Q_Y^{\alpha'}\right)=[Y\colon X]\mu\left(Q_X^{\alpha}\right)$. Therefore, 
$\mu_v(Y) = [Y:X] \mu_v(X)$ for each $v \in V(X)$, and $\mu(Y)=[Y:X]\mu(X)$.
\item
$\lambda\left(Q_Y^{\alpha'}\right)=[Y\colon X]\lambda\left(Q_X^{\alpha}\right)$.
Therefore, 
\[
\lambda_v(Y)=
\begin{cases}
\hfill [Y:X]\lambda_v(X)\hfill\hfill& \text{if $d \geq 2$,} \\
[Y:X](\lambda_v(X)+1)-1 & \text{if $d=1$}.
\end{cases}
\]
for each $v \in V(X)$, and $\lambda(Y)=[Y:X]\lambda(X)$.
\end{enumerate}
\end{theorem}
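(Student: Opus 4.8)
The plan is to prove the two identities
\begin{equation}
\mu\!\left(Q_Y^{\alpha'}\right)=[Y:X]\,\mu\!\left(Q_X^{\alpha}\right)
\qquad\text{and}\qquad
\lambda\!\left(Q_Y^{\alpha'}\right)=[Y:X]\,\lambda\!\left(Q_X^{\alpha}\right),
\end{equation}
and then to read off statements $(1)$ and $(2)$ formally. Indeed, Theorem \ref{thm_2} gives $\mu_v(X,\alpha)=\mu(Q_X^{\alpha})$ and $\lambda_v(X,\alpha)=\lambda(Q_X^{\alpha})$ for $d\ge 2$, with the shift $\lambda_v=\lambda(Q_X^{\alpha})-1$ when $d=1$; likewise Theorem \ref{thm_3} gives the non-rooted invariants with no shift. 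Thus the rooted $\mu$ and both $\lambda$-statements are immediate once the displayed identities hold, while the case split in $(2)$ is exactly the bookkeeping of the $-1$ shift at $d=1$ (yielding $[Y:X](\lambda_v(X)+1)-1$). Hypotheses $(a)$ and $(b)$/$(b)'$ guarantee, through \eqref{eq_14} and \eqref{eq_21}, that $Q_X^{\alpha}$ and $Q_Y^{\alpha'}$ do not vanish on $W^d$, so all invariants are defined and Theorem \ref{thm_1} applies.

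\textbf{Factorization of $Q_Y^{\alpha'}$.}
Write $G=\Gal(Y/X)$ and realize $Y=X(\gamma)$ for a voltage assignment $\gamma\colon\mathbb{E}(X)\to G$. The hypothesis $\Gal(Y(\alpha'_n)/X)\cong G\times\Gamma_n$ lets me view $Y$ as the intermediate graph of a $(G\times\Gamma_n)$-cover of $X$ attached to $\{1\}\times\Gamma_n$. Block-diagonalizing the regular representation of $G$ exactly as in the proof of Theorem \ref{thm_6}, but now carrying the variables $T_1,\dots,T_d$, I obtain
\begin{equation}
Q_Y^{\alpha'}(T_1,\dots,T_d)=\prod_{\rho\in\widehat{G}}Q_X^{\gamma,\alpha}(\rho,T_1,\dots,T_d)^{d_{\rho}},
\end{equation}
where $Q_X^{\gamma,\alpha}(\rho,\cdot)=\det\!\left(\bm{D}_X-\mathbb{W}_X^{\gamma,\alpha}(\rho,\cdot)\right)$ is the $\rho$-twisted series built from $\rho\circ\gamma$, and $\rho=\triv_G$ recovers $Q_X^{\alpha}$. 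Equivalently, this can be verified by specialization at every $\bm{\zeta}_{\psi}-\bm{1}$ using the induction property $h_Y^{\alpha'_n}(\psi,t)=h_X^{(\gamma,\alpha_n)}\!\left(\Ind_{\Gamma_n}^{G\times\Gamma_n}\psi,t\right)$ together with $\Ind_{\Gamma_n}^{G\times\Gamma_n}\psi=\reg_G\otimes\psi$ and the direct sum rule \eqref{eq_26}. Since $\mu$ and $\lambda$ are additive along products ($N(\cdot)$ is additive and $\ord_{\mathfrak p}$ is a valuation), it suffices to show $\mu(Q_X^{\gamma,\alpha}(\rho,\cdot))=d_{\rho}\,\mu(Q_X^{\alpha})$ and $\lambda(Q_X^{\gamma,\alpha}(\rho,\cdot))=d_{\rho}\,\lambda(Q_X^{\alpha})$ for each $\rho$; summing then gives the claim, as $\sum_{\rho}d_{\rho}^{2}=\#G=[Y:X]$.

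\textbf{The reduction engine for $\lambda$.}
Because $G$ is a $p$-group, the mod-$\mathfrak m$ reduction of every irreducible $K$-representation $\rho$ has only trivial composition factors: a $p$-group acting on a vector space in characteristic $p$ fixes a nonzero vector, so $\overline{\rho}$ is unipotent with $d_{\rho}$ trivial factors. The entries $\rho(\gamma(e))$, whose eigenvalues are $p$-power roots of unity and hence $\equiv 1\pmod{\mathfrak m}$, therefore force $\bm{D}_X-\mathbb{W}_X^{\gamma,\alpha}(\rho,\cdot)$ to reduce, after conjugation, to a block-triangular matrix with $d_{\rho}$ diagonal copies of the reduction of $\bm{D}_X-\mathbb{W}_X^{\alpha}$. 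Taking determinants yields $\overline{Q_X^{\gamma,\alpha}(\rho,\cdot)}=\overline{Q_X^{\alpha}}^{\,d_{\rho}}$ in $\Omega_{d,\mathcal{O}'}$. Once this common reduction (after stripping the uniformizer power) is nonzero, it computes $\lambda$ through $\lambda(F)=\sum_{\mathfrak p}\ord_{\mathfrak p}(\overline{F_0})$, giving $\lambda(Q_X^{\gamma,\alpha}(\rho,\cdot))=d_{\rho}\lambda(Q_X^{\alpha})$ at once.

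\textbf{The $\mu$-invariant: the main obstacle, and the role of $(c)$/$(c)'$.}
The reduction above is informative only after dividing by the common power $\varpi^{N}$ recording $\mu$, and a priori the weight entries $\sqrt{w_u}\sqrt{w_v}$ contribute positive valuation, so $\mu(Q_X^{\alpha})$ need not vanish. This is precisely where $(c)$/$(c)'$ enter: the bound $\val_p(w_v)\ge\mu(X)/\#V(X)$ is the vertex-weighted substitute for the classical hypothesis $\mu=0$ (in the unweighted case it forces $\mu(X)=0$, recovering \cite[Theorem 4.1]{RV22}). I expect the crux to be showing that this bound guarantees the entire $\varpi$-power dividing $Q_X^{\alpha}$ is governed uniformly by the weights, which are pulled back identically along $\pi$ and reproduced with multiplicity $[Y:X]$ on the $\#V(Y)=[Y:X]\#V(X)$ vertices of $Y$. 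Since twisting by $\rho$ only multiplies entries by the \emph{units} $\rho(\gamma(e))$, it cannot change this weight-governed valuation, so $N$ agrees for $Q_X^{\gamma,\alpha}(\rho,\cdot)$ and for $d_\rho$ copies of $Q_X^{\alpha}$; this both yields $\mu(Q_X^{\gamma,\alpha}(\rho,\cdot))=d_{\rho}\mu(Q_X^{\alpha})$ and legitimizes the $\lambda$-reduction above. Assembling the factors gives the two displayed identities, and translating through Theorems \ref{thm_2} and \ref{thm_3} — with the $d=1$ shift producing the $[Y:X](\lambda_v(X)+1)-1$ formula and the denominator $\#\Gamma_n$ absorbed exactly as in \eqref{eq_40} — completes the proof.
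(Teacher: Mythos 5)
Your overall skeleton is sound, and in one respect it genuinely differs from the paper: you factor $Q_Y^{\alpha'}=\prod_{\rho\in\widehat{G}}Q_X^{\gamma,\alpha}(\rho,\cdot)^{d_\rho}$ over \emph{all} irreducible representations of the full $p$-group $G$ (using $\sum_\rho d_\rho^2=\#G$ and unipotence of mod-$\mathfrak{m}$ reductions of $p$-group representations), whereas the paper first reduces to $G\cong\mathbb{Z}/p\mathbb{Z}$ via a central subgroup of order $p$ and a chain of degree-$p$ covers, and only ever needs the abelian decomposition \eqref{eq_27}--\eqref{eq_29}. Your route would avoid that dévissage, and your additivity argument for $\mu,\lambda$ along products (additivity of $N(\cdot)$ and of $\ord_{\mathfrak{p}}$) is if anything cleaner than passing through the asymptotic formula. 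But there is a genuine gap at the step you yourself flag: the $\mu$-identity. You write that you ``expect the crux to be'' a uniform control of the $\varpi$-power by the weights, and the mechanism you offer --- twisting entries by the units $\rho(\gamma(e))$ ``cannot change this weight-governed valuation'' --- is false as a principle. Valuations of determinants are not stable under unit-twisting of entries, because cancellation patterns change: $\det\begin{pmatrix}1&1\\1&1\end{pmatrix}=0$ while $\det\begin{pmatrix}1&u\\1&1\end{pmatrix}=1-u\neq 0$ for a unit $u\neq 1$. What is true, and what both your $\lambda$-engine and the $\mu$-claim actually require, is an equality of \emph{reductions} mod $\mathfrak{m}$, and that argument only makes sense once the matrix entries are integral and only yields information once the reduced determinant is nonzero.

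This is exactly what the paper's rescaling step supplies, and it is the one place hypothesis $(c)$/$(c)'$ is used: set $m=\#V(X)$ and $M=\mu(X)\,e(K/\mathbb{Q}_p)$, and replace $X$ by $\varpi^{-M/m}X$ (every weight multiplied by $\varpi^{-M/m}$). Condition $(c)$ guarantees the rescaled weights still lie in the valuation ring, so $\bm{D}_{\varpi^{-M/m}X}-\mathbb{W}^{\gamma,\alpha}_{\varpi^{-M/m}X}(\rho,T)$ has integral entries and reduction mod $\mathfrak{m}$ is legitimate; moreover $Q^{\alpha}_{\varpi^{-M/m}X}=\varpi^{-M}Q_X^{\alpha}$ has $\mu=0$, i.e.\ \emph{nonzero} reduction. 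Only now does your unipotence argument bite: since $p$-power roots of unity are $\equiv 1 \pmod{\mathfrak{m}}$, every twist has the same (nonzero) reduction, namely $\overline{Q^{\alpha}_{\varpi^{-M/m}X}}^{\,d_\rho}$, which forces $\mu\bigl(Q^{\gamma,\alpha}_{\varpi^{-M/m}X}(\rho,\cdot)\bigr)=0$ and $\lambda\bigl(Q^{\gamma,\alpha}_{\varpi^{-M/m}X}(\rho,\cdot)\bigr)=d_\rho\,\lambda(Q_X^{\alpha})$ simultaneously; undoing the rescaling (using $\#V(Y)=\#G\cdot m$, so $Q^{\alpha\circ\pi}_{\varpi^{-M/m}Y}=\varpi^{-\#G\cdot M}Q_Y^{\alpha\circ\pi}$) gives $\mu(Q_Y^{\alpha'})=\#G\cdot\mu(Q_X^{\alpha})$. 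Note also that your $\lambda$-paragraph as written has the same hidden dependency: ``stripping the uniformizer power'' from the determinant $F=\varpi^N F_0$ is not the same as making the matrix entries integral, and without the rescaling the block-triangular reduction does not compute $\overline{F_0}$. So the missing idea is not a technicality but the pivot of the whole proof; with the rescaling inserted before your reduction engine, your general-$G$ version goes through.
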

To prove this theorem, we need some preliminaries.
\begin{definitions}\label{def_1}
Let $X$ be a vertex-weighted graph, and let $\alpha\colon \mathbb{E}(X) \to \Gamma$ and $\beta \colon \mathbb{E}(X) \to \mathbb{Z}/p\mathbb{Z}$ be voltage assignments.
\begin{enumerate}[label=(\roman*)]
\item
For each $\phi \in (\mathbb{Z}/p\mathbb{Z})\hat{\phantom{)}}$, define \glssymbol{bbWXalphabeta}$\mathbb{W}_X^{\alpha, \beta}(\phi, T_1,T_2,\dotsc, T_d)$ to be the matrix  over $\Lambda_{d,\mathcal{O}'}\otimes_{\mathcal{O}'} K'$ labeled by $V(X)$ and whose $(u,v)$-entry is given by
\begin{equation}
\left(\mathbb{W}_X^{\alpha, \beta}(\phi, T_1,T_2,\dotsc, T_d)\right)(u,v)
=\sum_{\substack{e\in \mathbb{E}(X)\colon \\ o(e)=u,\\ t(e)=v}}\phi(\beta(e))\cdot\mathbb{t}(\alpha(e))(T_1,\dotsc, T_d)\cdot\sqrt{w_u}\sqrt{w_v}.
\end{equation}
We often write simply $\mathbb{W}_X^{\alpha, \beta}(\phi,T)$ for $\mathbb{W}_X^{\alpha, \beta}(\phi, T_1,T_2,\dotsc, T_d)$.
\item
For each $\phi \in (\mathbb{Z}/p\mathbb{Z})\hat{\phantom{)}}$, define the series \glssymbol{QXalphabeta}$Q_X^{\alpha,\beta}(\phi,T_1,\dotsc, T_d)$ in $\Lambda_{d,\mathcal{O}'}\otimes_{\mathcal{O}'} K'$ to be 
\begin{equation}
Q_X^{\alpha,\beta}(\phi,T_1,\dotsc, T_d)
=\det\left(\mathbfit{D}_X- \mathbb{W}_X^{\alpha, \beta}(\phi, T_1,\dotsc, T_d)\right).
\end{equation}
We often write simply $Q_X^{\alpha, \beta}(\phi,T)$ for $Q_X^{\alpha, \beta}(\phi, T_1,T_2,\dotsc, T_d)$.
\item
For each non-negative integer $n$, define a map $\gamma_n\colon \mathbb{E}(X)\to (\mathbb{Z}/p\mathbb{Z})\times \Gamma_n$ to be given by
\begin{equation}
\gamma_n(e) = (\beta(e), \alpha_n(e))
\end{equation}
for each $e \in \mathbb{E}(X)$.
Note that $\gamma_n$ is a voltage assignment of $X$, and $X(\gamma_n) \cong (X(\beta))(\alpha_n \circ \pi)$, where $\pi$ stands for the Galois cover $X(\beta)/X$.
Figure \ref{fig_5} illustrates the above settings.
\begin{figure}
\begin{tikzcd}[row sep=3mm]
X(\beta) \arrow[<-, r] \arrow[->,dd, "\pi"']& (X(\beta))(\alpha_1\circ \pi)  \arrow[<-, r] &(X(\beta))(\alpha_2\circ \pi)  \arrow[<-, r] & \arrow[<-, r]  \cdots& (X(\beta))(\alpha\circ \pi) \arrow[->,dd]\\
{}&X(\gamma_1) \arrow[phantom, sloped, "\cong",u] \arrow[->, dl, shorten > =2mm] &X(\gamma_2) \arrow[phantom, sloped, "\cong",u] \arrow[->, dll, shorten > =6mm] &{}&{} \\
X \arrow[<-, r] & X(\alpha_1)  \arrow[<-, r] & X(\alpha_2)  \arrow[<-, r] & \arrow[<-, r] \cdots & X(\alpha)
\end{tikzcd}
\caption{Two $\mathbb{Z}_p$-towers.} \label{fig_5}
\end{figure}
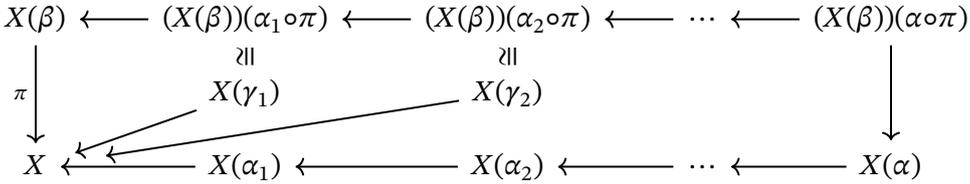
\end{enumerate}
\end{definitions}
\begin{lemma}
For every $\phi \in (\mathbb{Z}/p\mathbb{Z})\hat{\phantom{)}}$ and every $\psi \in \widehat{\Gamma}$, 
\begin{equation}\label{eq_24}
Q_X^{\alpha, \beta}\left(\phi, \mathbfit{\zeta}_{\psi}-\mathbfit{1}\right)=h_X^{\gamma_n}(\phi\boxtimes \psi, 1),
\end{equation}
where $\phi\boxtimes\psi$ is the external tensor product of $\phi$ and $\psi$.
\end{lemma}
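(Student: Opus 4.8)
The plan is to mimic the proof of \eqref{eq_20} almost verbatim, the only new ingredient being the bookkeeping for the direct-product group $(\mathbb{Z}/p\mathbb{Z}) \times \Gamma_n$ and the external tensor product $\phi \boxtimes \psi$. Since $\phi \boxtimes \psi$ is a character, hence one-dimensional, the argument reduces to matching the $(u,v)$-entries of two weighted adjacency matrices and then taking $\det(\bm{D}_X - \cdot)$ of both.

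First I would recall the computation already carried out in the proof of \eqref{eq_20}: for $\sigma = \sigma_1^{a_1}\cdots\sigma_d^{a_d} \in \Gamma$ one has $\mathbb{t}(\sigma)(\bm{\zeta}_{\psi}-\bm{1}) = \psi(\bar{\sigma})$, and hence $\mathbb{t}(\alpha(e))(\bm{\zeta}_{\psi}-\bm{1}) = \psi(\alpha_n(e))$ for every edge $e$. Substituting this into the definition of $\mathbb{W}_X^{\alpha,\beta}(\phi, T)$ and evaluating at $T = \bm{\zeta}_{\psi}-\bm{1}$ gives, for the $(u,v)$-entry,
\[
\left(\mathbb{W}_X^{\alpha,\beta}(\phi, \bm{\zeta}_{\psi}-\bm{1})\right)(u,v) = \sum_{\substack{e \in \mathbb{E}(X):\\ o(e)=u,\ t(e)=v}} \phi(\beta(e))\,\psi(\alpha_n(e))\,\sqrt{w_u}\sqrt{w_v}.
\]

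Next I would expand the other side. Because $\phi \boxtimes \psi$ has degree $1$, evaluating the $h$-function at $t=1$ collapses the Kronecker products to scalars and yields $h_X^{\gamma_n}(\phi\boxtimes\psi, 1) = \det\bigl(\bm{D}_X - \sum_{g} (\phi\boxtimes\psi)(g)\,\mathcal{W}_X^{\gamma_n, g}\bigr)$, where $g$ ranges over $(\mathbb{Z}/p\mathbb{Z}) \times \Gamma_n$. Writing $g = (a,\sigma)$ and using $(\phi\boxtimes\psi)(a,\sigma) = \phi(a)\psi(\sigma)$ together with $\gamma_n(e) = (\beta(e), \alpha_n(e))$, the definition of $\mathcal{W}_X^{\gamma_n, g}$ regroups the sum over edges according to their $\gamma_n$-value, so that
\[
\sum_{g} (\phi\boxtimes\psi)(g)\,\mathcal{W}_X^{\gamma_n, g}(u,v) = \sum_{\substack{e \in \mathbb{E}(X):\\ o(e)=u,\ t(e)=v}} \phi(\beta(e))\,\psi(\alpha_n(e))\,\sqrt{w_u}\sqrt{w_v},
\]
which is exactly the entry computed above. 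Hence $\mathbb{W}_X^{\alpha,\beta}(\phi, \bm{\zeta}_{\psi}-\bm{1}) = \sum_g (\phi\boxtimes\psi)(g)\,\mathcal{W}_X^{\gamma_n, g}$, and subtracting both sides from $\bm{D}_X$ and taking determinants gives $Q_X^{\alpha,\beta}(\phi, \bm{\zeta}_{\psi}-\bm{1}) = h_X^{\gamma_n}(\phi\boxtimes\psi, 1)$, as claimed.

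There is no genuine obstacle here: the statement is simply the two-voltage refinement of \eqref{eq_20}, and the proof is a routine matching of matrix entries. The only point requiring care is the index bookkeeping --- confirming that summing the $\gamma_n$-decomposed weighted adjacency matrices against the character $\phi\boxtimes\psi$ recombines precisely into the single edge-sum defining $\mathbb{W}_X^{\alpha,\beta}(\phi, T)$. This hinges on the elementary fact that $e \mapsto \gamma_n(e)$ records $\beta(e)$ and $\alpha_n(e)$ independently in the two factors of $(\mathbb{Z}/p\mathbb{Z}) \times \Gamma_n$, so that the character value factors as $\phi(\beta(e))\,\psi(\alpha_n(e))$.
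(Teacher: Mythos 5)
Your proposal is correct and follows essentially the same route as the paper's own proof: both reduce to the identity $\mathbb{t}(\alpha(e))\left(\bm{\zeta}_{\psi}-\bm{1}\right)=\psi(\alpha_n(e))$ from the proof of \eqref{eq_20}, match the $(u,v)$-entries of $\mathbb{W}_X^{\alpha,\beta}\left(\phi,\bm{\zeta}_{\psi}-\bm{1}\right)$ with those of $\sum_{\sigma}(\phi\boxtimes\psi)(\sigma)\,\mathcal{W}_X^{\gamma_n,\sigma}$ via $(\phi\boxtimes\psi)(\gamma_n(e))=\phi(\beta(e))\psi(\alpha_n(e))$, and conclude by taking $\det(\bm{D}_X-\cdot)$. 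Your only addition is spelling out explicitly that the $h$-function at $t=1$ for a degree-one representation collapses to $\det\bigl(\bm{D}_X-\sum_{\sigma}(\phi\boxtimes\psi)(\sigma)\,\mathcal{W}_X^{\gamma_n,\sigma}\bigr)$, a step the paper leaves implicit.
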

\begin{proof}
This proof is similar to Proposition \ref{prop_1}, so we will give a sketch.
We first consider the matrix $\mathbb{W}_X^{\alpha, \beta}(\phi, T_1,\dotsc, T_d)$.
For the $(u,v)$-entry, we have
\begin{align*}
\left(\mathbb{W}_X^{\alpha, \beta}(\phi, \mathbfit{\zeta}_{\psi}-\mathbfit{1})\right)(u,v)
&=\sum_{\substack{e\in \mathbb{E}(X) \colon \\ o(e)=u,\\ t(e)=v}}\phi(\beta(e))\cdot \mathbb{t}(\alpha(e))\left(\mathbfit{\zeta}_{\psi}-\mathbfit{1}\right)\cdot \sqrt{w_u}\sqrt{w_v} \\
&=\sum_{\substack{e\in \mathbb{E}(X) \colon \\ o(e)=u,\\ t(e)=v}}\phi(\beta(e))\cdot \psi(\alpha_n(e))\cdot \sqrt{w_u}\sqrt{w_v} \\
&=\sum_{\substack{e\in \mathbb{E}(X) \colon \\ o(e)=u,\\ t(e)=v}}(\phi\boxtimes \psi)(\gamma_n(e))\cdot \sqrt{w_u}\sqrt{w_v} \\
&=\left(\sum_{\sigma \in  (\mathbb{Z}/p\mathbb{Z}) \times \Gamma_n}(\phi\boxtimes \psi)(\sigma)\otimes \mathcal{W}_X^{\gamma_n, \sigma}\right)(u,v),
\end{align*}
and so $\mathbb{W}_X^{\alpha, \beta}\left(\phi, \mathbfit{\zeta}_{\psi}-\mathbfit{1}\right)=\sum_{\sigma \in (\mathbb{Z}/p\mathbb{Z}) \times \Gamma_n}(\phi\boxtimes \psi)(\sigma)\otimes \mathcal{W}_X^{\gamma_n, \sigma}.$
Hence $Q_X^{\alpha, \beta}\left(\phi, \mathbfit{\zeta}_{\psi}-\mathbfit{1}\right)=h_X^{\gamma_n}(\phi\boxtimes \psi,1)$.
\end{proof}
\begin{lemma}
For every $\psi \in \widehat{\Gamma}_n$, 
\begin{equation}\label{eq_25}
\Ind_{\{0\}\times \Gamma_n}^{(\mathbb{Z}/p\mathbb{Z})\times \Gamma_n}(\mathrm{triv}_{(\mathbb{Z}/p\mathbb{Z})}\boxtimes \psi)\cong \bigoplus_{\phi \in (\mathbb{Z}/p\mathbb{Z})\hat{\phantom{)}}}\phi\boxtimes\psi.
\end{equation}
\end{lemma}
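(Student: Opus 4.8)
The plan is to exploit that $G := (\mathbb{Z}/p\mathbb{Z}) \times \Gamma_n$ is a finite abelian group, so that all of its irreducible representations are one-dimensional and are precisely the external tensor products $\phi \boxtimes \psi'$ with $\phi \in (\mathbb{Z}/p\mathbb{Z})\hat{\phantom{)}}$ and $\psi' \in \widehat{\Gamma}_n$. Writing $H = \{0\} \times \Gamma_n$, I would first record the dimension count: since $[G:H] = p$ and $\psi$ is one-dimensional, both sides of \eqref{eq_25} have dimension $p$, which equals the number of characters $\phi$ of $\mathbb{Z}/p\mathbb{Z}$. Thus it suffices to identify the irreducible constituents together with their multiplicities.

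The key step is Frobenius reciprocity. For each irreducible $\phi \boxtimes \psi'$ of $G$, I would compute
\[
\langle \Ind_H^G(\mathrm{triv}_{(\mathbb{Z}/p\mathbb{Z})} \boxtimes \psi),\ \phi \boxtimes \psi' \rangle_G = \langle \mathrm{triv}_{(\mathbb{Z}/p\mathbb{Z})} \boxtimes \psi,\ \Res_H(\phi \boxtimes \psi') \rangle_H.
\]
The restriction to $H = \{0\}\times\Gamma_n$ kills the $\mathbb{Z}/p\mathbb{Z}$-component: evaluating $\phi$ at $0$ gives $1$, so $\Res_H(\phi \boxtimes \psi') = \psi'$ as a character of $H \cong \Gamma_n$, and likewise $\mathrm{triv}_{(\mathbb{Z}/p\mathbb{Z})} \boxtimes \psi$ is just $\psi$ on $H$. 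Orthogonality of the characters of $\Gamma_n$ then yields $\langle \psi, \psi' \rangle_{\Gamma_n} = \delta_{\psi,\psi'}$. Hence $\phi \boxtimes \psi'$ occurs in the induction with multiplicity $1$ exactly when $\psi' = \psi$, and with multiplicity $0$ otherwise. Summing these multiplicities over all $p$ characters $\phi$ of $\mathbb{Z}/p\mathbb{Z}$ produces a subrepresentation of dimension $p$, which already exhausts the full dimension computed above; therefore the evident inclusion $\bigoplus_{\phi} \phi \boxtimes \psi \hookrightarrow \Ind_H^G(\mathrm{triv}_{(\mathbb{Z}/p\mathbb{Z})} \boxtimes \psi)$ must be an equality, giving \eqref{eq_25}.

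Conceptually, the same identity follows from the compatibility of induction with external tensor products, namely
\[
\Ind_{\{0\}\times\Gamma_n}^{(\mathbb{Z}/p\mathbb{Z})\times\Gamma_n}(\mathrm{triv}_{(\mathbb{Z}/p\mathbb{Z})}\boxtimes\psi) \cong \left(\Ind_{\{0\}}^{\mathbb{Z}/p\mathbb{Z}} \mathrm{triv}\right) \boxtimes \psi,
\]
combined with the decomposition of the regular representation $\Ind_{\{0\}}^{\mathbb{Z}/p\mathbb{Z}}\mathrm{triv} \cong \bigoplus_{\phi \in (\mathbb{Z}/p\mathbb{Z})\hat{\phantom{)}}} \phi$ of the cyclic group $\mathbb{Z}/p\mathbb{Z}$; distributing the external product over the direct sum then reproduces the right-hand side. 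I do not anticipate a genuine obstacle here, since everything reduces to characters of finite abelian groups. The only point demanding care is the bookkeeping of the external-tensor-product notation under restriction to $H$, where the triviality of the $\mathbb{Z}/p\mathbb{Z}$-factor is exactly what collapses $\phi \boxtimes \psi'$ down to $\psi'$; this is the same mechanism already used implicitly in the proof of the induction property \eqref{eq_23}.
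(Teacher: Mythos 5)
Your proposal is correct and takes essentially the same route as the paper: Frobenius reciprocity, the observation that restricting $\phi\boxtimes\psi'$ to $\{0\}\times\Gamma_n$ collapses it to $\psi'$ because $\phi(0)=1$, and orthogonality of the characters of $\Gamma_n$ to get multiplicity $\delta_{\psi,\psi'}$. Your added dimension count and the alternative argument via $\Ind_{\{0\}}^{\mathbb{Z}/p\mathbb{Z}}\mathrm{triv}\cong\bigoplus_{\phi}\phi$ are harmless supplements, but the core mechanism is identical to the paper's proof.
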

\begin{proof}
Recall that $((\mathbb{Z}/p\mathbb{Z})\times \Gamma_n)\hat{\phantom{)}}=\left\{\phi\boxtimes \xi \;\middle|\; \phi \in(\mathbb{Z}/p\mathbb{Z})\hat{\phantom{)}},\ \xi\in \widehat{\Gamma}_n \right\}$.
By the Frobenius reciprocity, we have
\begin{align*}
\biggl\langle \Ind_{\{0\}\times \Gamma_n}^{(\mathbb{Z}/p\mathbb{Z})\times \Gamma_n}(\mathrm{triv}_{(\mathbb{Z}/p\mathbb{Z})}\boxtimes \psi), \phi\boxtimes \xi\biggr\rangle &=\left\langle \mathrm{triv_{(\mathbb{Z}/p\mathbb{Z})}}\boxtimes \psi, \Res_{\{0\}\times \Gamma_n}^{(\mathbb{Z}/p\mathbb{Z})\times \Gamma_n}(\phi\boxtimes \xi)\right\rangle \\
&=\left\langle \mathrm{triv_{(\mathbb{Z}/p\mathbb{Z})}}\boxtimes \psi, \mathrm{triv}_{(\mathbb{Z}/p\mathbb{Z})}\boxtimes \xi\right\rangle \\
&=\langle \psi, \xi\rangle \\
&=\begin{cases}
1 & \text{if $\xi =\psi$}\\
0& \text{if $\xi \neq \psi$},\\
\end{cases}
\end{align*}
which means that $\Ind_{\{0\}\times \Gamma_n}^{(\mathbb{Z}/p\mathbb{Z})\times \Gamma_n}(\mathrm{triv}_{(\mathbb{Z}/p\mathbb{Z})}\boxtimes \psi)\cong \bigoplus_{\phi \in (\mathbb{Z}/p\mathbb{Z})\hat{\phantom{)}}}\phi\boxtimes\psi$, as desired.
\end{proof}
\begin{proposition}
For every $\psi \in \widehat{\Gamma}_n$,
\begin{equation}\label{eq_27}
Q_{X(\beta)}^{\alpha \circ \pi}\left(\mathbfit{\zeta}_{\psi}-\mathbfit{1}\right)=\prod_{\phi \in (\mathbb{Z}/p\mathbb{Z})\hat{\phantom{)}}}Q_X^{\alpha,\beta}\left(\phi, \mathbfit{\zeta}_{\psi}-\mathbfit{1}\right)
\end{equation}
\end{proposition}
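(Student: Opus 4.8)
The plan is to prove \eqref{eq_27} by translating the left-hand side into a value of an $h$-function and then chaining together the three tools established above: the Induction Property, the direct sum rule \eqref{eq_26}, and the representation-theoretic decomposition \eqref{eq_25}. Throughout, fix $\psi \in \widehat{\Gamma}_n$ and abbreviate $G = (\mathbb{Z}/p\mathbb{Z}) \times \Gamma_n$ and $H = \{0\} \times \Gamma_n$.

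First I would rewrite the left-hand side. Applying \eqref{eq_20} to the graph $X(\beta)$ equipped with the voltage assignment $\alpha \circ \pi \colon \mathbb{E}(X(\beta)) \to \Gamma$, and noting that $(\alpha\circ\pi)_n = \alpha_n\circ\pi$, gives
\[
Q_{X(\beta)}^{\alpha\circ\pi}\left(\bm{\zeta}_\psi - \bm{1}\right) = h_{X(\beta)}^{\alpha_n\circ\pi}(\psi, 1).
\]
The key structural observation is then that $X(\beta)$ is exactly the intermediate graph of the Galois cover $X(\gamma_n)/X$ associated to the subgroup $H \le G$: since $\gamma_n(e) = (\beta(e), \alpha_n(e))$, the quotient $H\backslash G$ is identified with $\mathbb{Z}/p\mathbb{Z}$, so the construction \eqref{eq_22} reproduces the incidence data of $X(\beta)$, and the isomorphism $X(\gamma_n) \cong (X(\beta))(\alpha_n\circ\pi)$ recorded in Definition \ref{def_1} confirms that the voltage assignment induced on this intermediate graph is precisely $\alpha_n\circ\pi$. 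With this identification in hand, the Induction Property applied to the character $\mathrm{triv}_{\mathbb{Z}/p\mathbb{Z}}\boxtimes\psi$ of $H$ yields
\[
h_{X(\beta)}^{\alpha_n\circ\pi}(\psi, 1) = h_X^{\gamma_n}\left(\Ind_H^G\left(\mathrm{triv}_{\mathbb{Z}/p\mathbb{Z}}\boxtimes\psi\right), 1\right).
\]

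To finish, I would decompose the induced representation using \eqref{eq_25}, which expresses it as $\bigoplus_{\phi} \phi\boxtimes\psi$ over the characters $\phi$ of $\mathbb{Z}/p\mathbb{Z}$. The direct sum rule \eqref{eq_26} then turns the $h$-function of this direct sum into the product $\prod_{\phi} h_X^{\gamma_n}(\phi\boxtimes\psi, 1)$, and \eqref{eq_24} identifies each factor with $Q_X^{\alpha,\beta}(\phi, \bm{\zeta}_\psi - \bm{1})$. Reading the resulting chain of equalities from left to right produces exactly \eqref{eq_27}.

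I expect the main obstacle to be not any single computation but the bookkeeping in the structural step: one must verify that $X(\beta)$, together with the voltage assignment $\alpha_n\circ\pi$, really is the intermediate datum to which the Induction Property applies for the subgroup $H = \{0\}\times\Gamma_n$, including checking the compatibility of a chosen system of coset representatives of $H\backslash G$ (e.g.\ $\{(a,0) \mid a \in \mathbb{Z}/p\mathbb{Z}\}$) with the formula defining $\gamma_n$. Once this matching is pinned down, the remaining steps are formal applications of the already-proven lemmas.
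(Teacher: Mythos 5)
Your proposal is correct and follows essentially the same route as the paper's own proof: the identical chain \eqref{eq_20}, then the Induction Property \eqref{eq_23} applied to $\mathrm{triv}_{(\mathbb{Z}/p\mathbb{Z})}\boxtimes\psi$, then \eqref{eq_25}, the direct sum rule \eqref{eq_26}, and finally \eqref{eq_24}. The structural bookkeeping you flag—identifying $X(\beta)$ with the intermediate graph of $X(\gamma_n)/X$ for $H=\{0\}\times\Gamma_n$ carrying the induced voltage assignment $\alpha_n\circ\pi$—is exactly the identification the paper records in Definitions \ref{def_1} via $X(\gamma_n)\cong(X(\beta))(\alpha_n\circ\pi)$, so nothing further is missing.
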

\begin{proof}
By \eqref{eq_20}, \eqref{eq_23}, \eqref{eq_25}, \eqref{eq_26}, and \eqref{eq_24}, we have
\begin{align*}
Q_{X(\beta)}^{\alpha \circ \pi}\left(\mathbfit{\zeta}_{\psi}-\mathbfit{1}\right)
&=h_{X(\beta)}^{\alpha_n\circ \pi}(\psi, 1) &&\text{(Use \eqref{eq_20}.)}\\
&=h_{X}^{\gamma_n}\left(\Ind_{\{0\}\times \Gamma_n}^{(\mathbb{Z}/p\mathbb{Z})\times \Gamma_n}(\mathrm{triv}_{(\mathbb{Z}/p\mathbb{Z})}\boxtimes \psi), 1\right) &&\text{(Use \eqref{eq_23}.)}\\
&=h_X^{\gamma_n}\left(\bigoplus_{\phi \in (\mathbb{Z}/p\mathbb{Z})\hat{\phantom{)}}}\phi\boxtimes\psi, 1\right)&&\text{(Use \eqref{eq_25}.)} \\
&=\prod_{\phi\in (\mathbb{Z}/p\mathbb{Z})\hat{\phantom{)}}}h_X^{\gamma_n}(\phi\boxtimes \psi, 1)&&\text{(Use \eqref{eq_26}.)}\\
&=\prod_{\phi\in (\mathbb{Z}/p\mathbb{Z})\hat{\phantom{)}}}Q_X^{\alpha, \beta}\left(\phi, \mathbfit{\zeta}_{\psi}-\mathbfit{1}\right) &&\text{(Use \eqref{eq_24}.)},
\end{align*}
which completes the proof.
\end{proof}
\begin{corollary}
Suppose either 
\begin{enumerate}[leftmargin=39pt]
\item[$(b)\phantom{'}$] All the $\kappa_v(X(\gamma_n))$ are non-zero for some fixed $v \in V(X)$. 
\item[$(b)'$] All the $\kappa(X(\gamma_n))$ are non-zero. 
\end{enumerate}
are satisfied.
Then 
\begin{enumerate}[label=$(\arabic*)$]
\item
$Q_{X(\beta)}^{\alpha\circ \pi}$ is non-vanishing on $W^d$.
\item 
$Q_X^{\alpha,\beta}(\phi,T)$ is non-vanishing on $W^d$ for each $\phi \in (\mathbb{Z}/p\mathbb{Z})\hat{\phantom{)}}$.
\item
The Iwasawa $\mu$-invariant of $Q_{X(\beta)}^{\alpha\circ \pi}$ and the Iwasawa $\lambda$-invariant of $Q_{X(\beta)}^{\alpha\circ \pi}$ expand as follows.
\begin{align}\label{eq_28}
\mu\left(Q_{X(\beta)}^{\alpha\circ \pi}(T)\right)&=\sum_{\phi \in (\mathbb{Z}/p\mathbb{Z})\hat{\phantom{)}}}\mu\left(Q_{X}^{\alpha, \beta}(\phi, T)\right). \\
\label{eq_29}\lambda\left(Q_{X(\beta)}^{\alpha\circ \pi}(T)\right)&=\sum_{\phi \in (\mathbb{Z}/p\mathbb{Z})\hat{\phantom{)}}}\lambda\left(Q_{X}^{\alpha, \beta}(\phi,T)\right).
\end{align}
\end{enumerate}
\end{corollary}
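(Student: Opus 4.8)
The plan is to combine the pointwise factorization \eqref{eq_27} with the decomposition formulas \eqref{eq_14} and \eqref{eq_21}, promoting the pointwise identity to a genuine power-series identity only at the final step. First I would establish $(1)$. By the isomorphism $X(\gamma_n)\cong (X(\beta))(\alpha_n\circ\pi)$ recorded in Definitions \ref{def_1}, hypothesis $(b)$ (resp.\ $(b)'$) says that $\kappa_v\bigl((X(\beta))(\alpha_n\circ\pi)\bigr)\neq 0$ (resp.\ $\kappa\bigl((X(\beta))(\alpha_n\circ\pi)\bigr)\neq 0$) for all $n$. Applying \eqref{eq_14} (resp.\ \eqref{eq_21}) to the $\Gamma_n$-cover $(X(\beta))(\alpha_n\circ\pi)/X(\beta)$ forces every factor $h_{X(\beta)}^{\alpha_n\circ\pi}(\psi,1)$ with $\psi\neq\triv_{\Gamma_n}$ to be non-zero. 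By \eqref{eq_20} applied to the base $X(\beta)$, these factors are exactly the values $Q_{X(\beta)}^{\alpha\circ\pi}(\bm{\zeta}_\psi-\bm{1})$, and as $\psi$ runs through the non-trivial characters of all the $\Gamma_n$ the points $\bm{\zeta}_\psi$ exhaust $W^d\setminus\{\bm{1}\}$; this yields the non-vanishing asserted in $(1)$, and it is precisely the argument already used in the proof of Theorem \ref{thm_2}.

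Part $(2)$ is then immediate from \eqref{eq_27}: for every non-trivial $\psi$ the product $\prod_{\phi}Q_X^{\alpha,\beta}(\phi,\bm{\zeta}_\psi-\bm{1})$ equals the non-zero quantity $Q_{X(\beta)}^{\alpha\circ\pi}(\bm{\zeta}_\psi-\bm{1})$ from $(1)$, so each individual factor $Q_X^{\alpha,\beta}(\phi,\bm{\zeta}_\psi-\bm{1})$ is non-zero. Hence each $Q_X^{\alpha,\beta}(\phi,T)$ is non-vanishing on $W^d$.

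For $(3)$ I would reduce the two assertions to the single power-series identity
\[
Q_{X(\beta)}^{\alpha\circ\pi}(T_1,\dotsc,T_d)=\prod_{\phi\in(\mathbb{Z}/p\mathbb{Z})\hat{\phantom{)}}}Q_X^{\alpha,\beta}(\phi,T_1,\dotsc,T_d),
\]
because the generalized invariants are additive for products: $\mu(FG)=\mu(F)+\mu(G)$ since the integer $N(\cdot)$ is additive under multiplication, and $\lambda(FG)=\lambda(F)+\lambda(G)$ since the reduction map is multiplicative and each $\ord_{\mathfrak{p}}$ is additive. The identity \eqref{eq_27} only gives equality after evaluation at the points $\bm{\zeta}_\psi-\bm{1}$, so I would upgrade it using the standard density principle that a non-zero element of $\Lambda_{d,\mathcal{O}'}\otimes_{\mathcal{O}'}K'$, or of a finite extension, cannot vanish at $\bm{\zeta}-\bm{1}$ for every $\bm{\zeta}\in W^d$ (for $d=1$ this is Weierstrass preparation, since such a series has only finitely many zeros in the open unit disc while $\{\zeta-1\mid\zeta\in W\}$ is infinite; the general case follows by induction on $d$). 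Applying this to the difference of the two sides, which agree on $W^d$ by \eqref{eq_27}, gives the displayed identity; the right-hand side is stable under the Galois group of the field generated by the $p$-th roots of unity (which permutes the characters $\phi$), so it descends to $\Lambda_{d,\mathcal{O}'}\otimes K'$ and matches the left-hand side. The additivity then yields \eqref{eq_28} and \eqref{eq_29}, using that $\mu$ and $\lambda$ are unchanged under the auxiliary base change needed to define each factor, thanks to the normalization of $\mu$ by $e(K/\mathbb{Q}_p)$.

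The main obstacle is exactly this last step: converting the pointwise equality \eqref{eq_27} into an honest equality of power series and checking the base-change invariance of $\mu$ and $\lambda$ that makes the sum over $\phi$ meaningful. Once the power-series identity is in hand, \eqref{eq_28} and \eqref{eq_29} are formal consequences of additivity, while $(1)$ and $(2)$ amount to bookkeeping with the decomposition formulas already established.
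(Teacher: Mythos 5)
Your proofs of $(1)$ and $(2)$ coincide with the paper's: $(1)$ is exactly the non-vanishing argument inside the proofs of Theorems \ref{thm_2} and \ref{thm_3}, applied to the tower $(X(\beta))(\alpha_n\circ\pi)$ over the base $X(\beta)$, and $(2)$ is read off from \eqref{eq_27}. For $(3)$, however, you take a genuinely different route. The paper never promotes the pointwise identity \eqref{eq_27} to an identity of power series: having $(1)$ and $(2)$, it applies the asymptotic formula of Theorem \ref{thm_1} to $Q_{X(\beta)}^{\alpha\circ\pi}$ and to each $Q_X^{\alpha,\beta}(\phi,T)$ separately, sums $p$-adic valuations over $W^d(n)\setminus\{\bm{1}\}$ using \eqref{eq_27}, and obtains \eqref{eq_28} and \eqref{eq_29} by matching the coefficients of $p^n p^{(d-1)n}$ and of $n p^{(d-1)n}$ in the resulting expansions. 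You instead establish the honest factorization $Q_{X(\beta)}^{\alpha\circ\pi}(T)=\prod_{\phi}Q_X^{\alpha,\beta}(\phi,T)$ as elements of $\Lambda_{d,\mathcal{O}''}\otimes_{\mathcal{O}''}K''$, where $K''=K'(\zeta_p)$ and $\mathcal{O}''$ is its valuation ring, via the rigidity lemma that a non-zero element of such a ring cannot vanish at $\bm{\zeta}-\bm{1}$ for every $\bm{\zeta}\in W^d$, and then invoke additivity of $N(\cdot)$ and of each $\ord_{\mathfrak{p}}$ under multiplication, which holds because $\Omega_{d,\mathcal{O}''}$ is a domain (indeed a UFD). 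This is correct, and the two difficulties you flag are real but surmountable exactly as you say: the rigidity lemma follows by induction on $d$ from Weierstrass preparation, and $\mu$, $\lambda$ are insensitive to the finite coefficient extension $K'\subseteq K''$ ($\mu$ because of the normalization by the ramification index over $\mathbb{Q}_p$; $\lambda$ because each $(\sigma-1)\bar{\hphantom{)}}$ remains prime and its order is preserved along the faithfully flat extension $\Omega_{d,\mathcal{O}'}\to\Omega_{d,\mathcal{O}''}$); your Galois-descent remark is not even needed, since both sides may simply be compared in the larger ring. What your route buys: the power-series factorization is a stronger, purely algebraic intermediate statement, valid independently of $(b)$/$(b)'$ (which are then needed only so that the invariants in \eqref{eq_28}--\eqref{eq_29} are defined), and it makes $(3)$ formal. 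What the paper's route buys: it needs no rigidity lemma and no base-change bookkeeping, since it only ever manipulates the numerical data that \eqref{eq_27} and Theorem \ref{thm_1} already provide.
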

\begin{proof}[Proof of $(1)$ and $(2)$]
The fact $Q_{X(\beta)}^{\alpha\circ\pi}(T)$ is non-vanishing on $W^d$ has been proven in Theorem \ref{thm_2} (if $(b)$ is satisfied) and in Theorem \ref{thm_3} (if $(b)'$ is satisfied).
Moreover, \eqref{eq_27} tells us that $Q_X^{\alpha,\beta}(\phi, T)$ is non-vanishing on $W^d$ for each $\phi \in (\mathbb{Z}/p\mathbb{Z})\hat{\phantom{)}}$, which is the desired conclusion.
\end{proof}
\begin{proof}[Proof of $(3)$]
Since (1) and (2) hold, we can apply Theorem \ref{thm_1} to $Q_{X(\beta)}^{\alpha\circ\pi}$ and $Q_X^{\alpha,\beta}(\phi,T)$ for each $\phi \in (\mathbb{Z}/p\mathbb{Z})\hat{\phantom{)}}$.
Now, using \eqref{eq_27}, the expansions directly follow.
\end{proof}
We are ready to prove Theorem \ref{thm_40}.
\begin{proof}[Proof of Theorem $\ref{thm_40}$ $(1)$]
We first claim that it suffices to show the formula in the case $G \cong \mathbb{Z}/p\mathbb{Z}$, following \cite[p.18]{RV22}.
Let $C$ denote the center of $G$. Then, since $G$ is of prime power order, $C$ is a non-trivial subgroup, and it is of $p$-power order by Lagrange's theorem.
By Sylow's theorem, there exists a subgroup $H$ of $C$ whose order is $p$.
By construction, $H$ is a normal subgroup of $G$.
Let $Z$ stand for an intermediate graph of $Y/X$ corresponding to $H$.
Then, the two covers $Y/Z$ and $Z/X$ are both Galois, the degree of $Y/Z$ is $p$-power, and the degree of $Z/X$ is exactly $p$.
Repeating the above discussion, we obtain the commutative diagram 
\[
\begin{tikzcd}[row sep=0.2cm]
X \arrow[phantom, d, "=", sloped]&&&& Y \arrow[->,llll] \arrow[phantom, d, "=", sloped]\\
Z_0 & Z_1\arrow[->,l] & Z_2 \arrow[->,l]& \cdots \arrow[->,l] & \arrow[->,l] Z_l
\end{tikzcd}
\]
of covers, where $Z_{i+1}/Z_i$ is a Galois cover of degree exactly $p$ for each $i \in \{0,1,\dotsc, l-1\}$.
As in Definitions \ref{def_1} (iii), we can construct $\mathbb{Z}_p^d$-towers
\[
Z_i \leftarrow Z_i\Big(\alpha^{(i)}_1\Big) \leftarrow Z_i\Big(\alpha^{(i)}_2\Big) \leftarrow \cdots \leftarrow Z_i\Big(\alpha^{(i)}\Big)
\]
satisfying $(a)$, either $(b)$ or $(b)'$, and either $(c)$ or $(c)'$.
This tells us that it is enough to show the theorem in the case $G \cong \mathbb{Z}/p\mathbb{Z}$.

Henceforth suppose $G=\mathbb{Z}/p\mathbb{Z}$.
We first give a proof for the case $(b)$ is satisfied.
Put $m=\#V(X)$, and put $M=\mu(X)e(K/\mathbb{Q}_p)$.  So $Q_X^{\alpha}(T)=\varpi^M Q_{X, 0}^{\alpha}(T)$, where $Q_{X, 0}^{\alpha}(T)$ is a polynomial in $\Lambda_{d,\mathcal{O}'}$ that can not be divided by $\varpi$.
Fix an $m$-th root $\varpi^{1/m}$ of $\varpi$, and write $K''$ for a field extension of $K'$  obtained by adding $\varpi^{1/m}$ to $K'$ equipped with the valuation $\val_p$ normalized so that $\val_p(p)=1$.
Let \(\mathcal{O}''\) be the valuation ring with maximal ideal \(\mathfrak{m}''\).
Write $\varpi^{-M/m}X$ for the $K''$-valued vertex-weighted graph whose weight function is given by $v \mapsto \varpi^{-M/m}w_v$.
 Because $\mu(X)/\#V(X) \leq \val_p(w_v)$ for every $v\in V(X)$, $\varpi^{-M/m}X$ is $\mathcal{O}''$-valued.
In these notations, $Q_{X,0}^{\alpha}(T)=\varpi^{-M}Q_X^{\alpha}(T)=Q_{\varpi^{-M/m}X}^{\alpha}(T)$, and hence $\mu(\varpi^{-M/m}X)=0$.
Since
 $\mathbb{W}_{\varpi^{-M/m}X}^{\alpha, \beta}(\phi, T)$ is a matrix over $\mathcal{O}''\lBrack T \rBrack$ for each $\phi \in (\mathbb{Z}/p\mathbb{Z})\hat{\phantom{)}}$, 
and since $\zeta_p-1 \in \mathfrak{m}''$, we have $\bigl(\mathbb{W}_{\varpi^{-M/m}X}^{\alpha,\beta}(\phi, T)\bigr)\overbar{\phantom{)}}=\bigl(\mathbb{W}_{\varpi^{-M/m}X}^{\alpha}(T)\bigr)\overbar{\phantom{)}}$ for every $\phi\in (\mathbb{Z}/p\mathbb{Z})\hat{\phantom{)}}$.
This implies that 
\begin{equation}\label{eq_30}
\bigl(Q_{\varpi^{-M/m}X}^{\alpha,\beta}(\phi, T)\bigr)\overbar{\phantom{)}} = \bigl(Q_{\varpi^{-M/m}X}^{\alpha}(T)\bigr)\overbar{\phantom{)}}, 
\end{equation}
and therefore $Q_{\varpi^{-M/m}X}^{\alpha,\beta}(\phi, T)$ can not be divided by $\varpi$, that is, $\mu\left(Q_{\varpi^{-M/m}X}^{\alpha,\beta}(\phi, T)\right)=0$ for every $\phi \in (\mathbb{Z}/p\mathbb{Z})\hat{\phantom{)}}$.
By \eqref{eq_28}, $\mu\left(Q_{\varpi^{-M/m}Y}^{\alpha\circ\pi}(T)\right)=0$.
Combining this conclusion with the fact that $Q_{\varpi^{-M/m}Y}^{\alpha \circ \pi}(T)=\varpi^{-p \cdot M}Q_Y^{\alpha \circ \pi}(T)$, we deduce that $\mu(Y)=p \cdot M=[Y:X]\mu(X)$, as desired.

For the case $(b)'$ is satisfied, put $M'=\mu(Y)e(K/\mathbb{Q}_p)$, and consider $\varpi^{-M'/(m\cdot p)}Y$ and $\varpi^{-M'/(m\cdot p)}X$, which are both $\mathcal{O}'$-valued.
Then $\mu\left(Q_{\varpi^{-M'/(m\cdot p)}X}^{\alpha, \beta}(\phi, T)\right)=0$ for each $\phi \in (\mathbb{Z}/p\mathbb{Z})\hat{\phantom{)}}$, and hence by \eqref{eq_28}, $\mu\left(Q_{\varpi^{-M'/(m\cdot p)}X}^{\alpha}(T)\right)=0$. 
This and the fact $Q_{\varpi^{-M'/(m\cdot p)}X}^{\alpha}(T)=\varpi^{-M'/p}Q_X^{\alpha}(T)$ imply $\mu(X)=p\cdot M'=[Y:X]\mu(Y)$.
This completes the proof.
\end{proof}
\begin{proof}[Proof of Theorem $\ref{thm_40}$ $(2)$]
We give a proof only for the case $(b)$ holds.
As in the above proof, it suffices to show the formula in the case $G \cong \mathbb{Z}/p\mathbb{Z}$. We use the same notations in the above proof.
By \eqref{eq_29} and \eqref{eq_30},
\begin{align*}
\lambda\left(Q_Y^{\alpha\circ\pi}(T)\right)
&=\lambda\left(Q_{\varpi^{-M/m}Y}^{\alpha\circ\pi}(T)\right)\\
&=\sum_{\phi \in (\mathbb{Z}/p\mathbb{Z})\,\hat{}}\lambda\left(Q_{\varpi^{-M/m}X}^{\alpha,\beta}(\phi, T)\right) \quad\text{(Use \eqref{eq_29}.)} \\
&=p\cdot \lambda\left(Q_{\varpi^{-M/m}X}^{\alpha}(T)\right)\quad \text{(Use \eqref{eq_30}.)} \\
&=p\cdot \lambda\left(Q_X^{\alpha}(T)\right).
\end{align*}
This implies that, $\lambda_v(Y)=[Y\colon X]\lambda_v(X)$, when $d \geq 2$, and $\lambda(Y)=[Y:X]\lambda(X)$.
When $d=1$, we have
\begin{align*}
\lambda_v(Y)+1&=\lambda\left(Q_Y^{\alpha\circ\pi}(T)\right) \\
&=[Y:X]\lambda\left(Q_X^{\alpha}(T)\right)\\
&=[Y:X](\lambda(X)+1),
\end{align*}
which establishes the formula.
\end{proof}
\begin{remark}
The statement of Kida's formula seems different from the one in \cite{RV22}.
When the non-weighted case (all the weights are 1), the condition either $(b)$ or $(b)'$ is satisfied is equivalent to the condition $\mu(X)=0$ or $\mu(Y)=0$, but then, $\mu(X)=\mu(Y)=0$ by (1).
This means that $\mu(X)=0$ if and only if $\mu(Y)=0$, and if this equivalent condition holds, then 
\[
\lambda(Y)=
\begin{cases}
\hfill [Y:X]\lambda(X) \hfill\hfill& \text{if $d \geq 2$,} \\
[Y:X](\lambda(X)+1)-1 & \text{if $d=1$,}
\end{cases}
\]
where $\lambda(X)$ and $\lambda(Y)$ stand for the Iwasawa $\lambda$-invariant for non-weighted graphs $X$ and $Y$ respectively.
Hence Theorem \ref{thm_40} is a direct generalization of Kida's formula for non-weighted graphs \cite[Theorem 4.1]{RV22}.
\end{remark}
\begin{example}\label{ex_3}
We give an example for Theorem \ref{thm_40}.
Consider the $\mathbb{Z}_2$-tower in Example \ref{ex_2}, and give another voltage assignment
$\beta\colon \mathbb{E}(X)\to \mathbb{Z}/2\mathbb{Z}$ defined by
\[
\beta(e_1)=0,\ \beta(e_2)=1,\ \beta(e_3)=0,\ \beta(e_4)=0.
\]
Put $Y = X(\beta)$.
Then, all the $X(\alpha_n)$ and all the $Y(\alpha_n\circ \pi)$ are connected, all the $\kappa(X(\alpha_n))$ and all the $\kappa(\alpha_n\circ \pi)$ are non-zero, and $\mu(X)=1/2, \lambda(X)=2$. Hence $(a)$, $(b)$, and $(c)$ are satisfied.
In order to calculate $\mu(Y)$ and $\lambda(Y)$, we calculate
\begin{align*}
&\phantom{a}Q_Y^{\alpha\circ \pi}(T)= \\
&\det\begin{pmatrix}
\begin{minipage}{3.1cm}$2+2\sqrt{2}-\sqrt{2}(1+T)-\sqrt{2}(1+T)^{-1}$\end{minipage}&0& -\sqrt[4]{2}(1+T) &0& -\sqrt[4]{2}&0 \\
0&\begin{minipage}{3.1cm}$2+2\sqrt{2}-\sqrt{2}(1+T)-\sqrt{2}(1+T)^{-1}$\end{minipage}&0& -\sqrt[4]{2}(1+T) & 0&-\sqrt[4]{2} \\
-\sqrt[4]{2}(1+T)^{-1} &0& 1+\sqrt{2} &0&0& -1 \\
0&-\sqrt[4]{2}(1+T)^{-1} &0& 1+\sqrt{2} & -1 &0\\
-\sqrt[4]{2}&0&0&-1&1+\sqrt{2}&0 \\
0&-\sqrt[4]{2}&-1&0&0&1+\sqrt{2}
\end{pmatrix}\\
\vphantom{\int}&\phantom{aQ_Y^{\alpha\circ \pi}(T)}=2\left(\left(-32-24\sqrt{2}\right)T+\left(76+64\sqrt{2}\right)T^2+\left(68+48\sqrt{2}\right)T^3+\left(11+4\sqrt{2}\right)T^4\right).\end{align*}
Hence $\mu(Y)=1,\ \lambda(Y)=4$, and Kida's formula holds.
Here we use \emph{SageMath} \cite{sagemath} to calculate $Q_X^{\alpha}(T)$ and $Q_Y^{\alpha\circ \pi}(T)$.
\end{example}
\begin{example}
There is a counterexample for the case $(c)$ in Theorem \ref{thm_40} is not satisfied.
Consider the same situation in Example \ref{ex_3}, but we change the weights of $X$ to the weights $w'$ given by
\[
w'_{v_1}=1,\ w'_{v_2}=1/2,\ w'_{v_3}=1.
\]
Then, all the $X(\alpha_n)$ and all the $Y(\alpha_n\circ \pi)$ are connected, and all the $\kappa(X(\alpha_n))$ and all the $\kappa(\alpha_n\circ \pi)$ are non-zero.
But
\begin{align*}
Q_X^{\alpha}(T)=\det
\begin{pmatrix}
\frac{7}{2}-(1+T)-(1+T)^{-1} & -\frac{1}{\sqrt{2}}(1+T) & -1 \\
-\frac{1}{\sqrt{2}}(1+T)^{-1} & 2 & -\frac{1}{\sqrt{2}} \\
-1&-\frac{1}{\sqrt{2}}&\frac{3}{2}\vphantom{\Bigg)}
\end{pmatrix}
=(-3(1+T)^{-1})T^2,
\end{align*}
which implies $\mu(X)=0, \lambda(X)=2$. This $\mathbb{Z}_2$-tower violates $(c)$ since $\val_2(w_{v_2})=-1$.
On the other hand 
\begin{align*}
&\phantom{a}Q_Y^{\alpha\circ \pi}(T)= \\
&\det\begin{pmatrix}
\frac{7}{2}-(1+T)-(1+T)^{-1} &0& -\frac{1}{\sqrt{2}}(1+T) &0& -1&0 \\
0&\frac{7}{2}-(1+T)-(1+T)^{-1}&0& -\frac{1}{\sqrt{2}}(1+T) & 0&-1 \\
-\frac{1}{\sqrt{2}}(1+T)^{-1} &0& 2 &0&0& -\frac{1}{\sqrt{2}} \\
0&-\frac{1}{\sqrt{2}}(1+T)^{-1}&0& 2 & -\frac{1}{\sqrt{2}} &0\\
-1&0&0&-\frac{1}{\sqrt{2}}&\frac{3}{2}\vphantom{\Biggl(}&0 \\
0&-1&-\frac{1}{\sqrt{2}}&0&0&\frac{3}{2}
\end{pmatrix}\\
\vphantom{\int}&\phantom{aQ_Y^{\alpha\circ \pi}(T)}=2\cdot(3(1+T)^{-2}(-1-T+T^2))T^2.
\end{align*}
Hence $\mu(Y)=1,\ \lambda(Y)=2$, and so Kida's formula fails.
Here we use \emph{SageMath} \cite{sagemath} to calculate $Q_Y^{\alpha\circ \pi}(T)$.
\end{example}
%%%%%%%%%%%%%%%%記号一覧
\clearpage
\printnoidxglossary[type=symbols, style=formel_altlong4colheader, title={Glossary of Symbols}]
%%%%%%%%%%%%%%%参考文献
%\clearpage
\begin{bibdiv}
\begin{biblist}

\bib{AMT}{article}{
  author={Adachi, Taiga},
  author={Mizuno, Kosuke},
  author={Tateno, Sohei},
  title={Iwasawa theory for weighted graphs},
  journal={Ann. Math. Qu\'ebec},
  volume={50},
  pages={231--265}
}

\bib{MR1401006}{article}{
  author={Chung, Fan Rong~K.},
  author={Langlands, Robert P.},
  title={A combinatorial Laplacian with vertex weights},
  journal={J.~Combin.~Theory Ser.~A},
  volume={75},
  number={2},
  date={1996},
  pages={316--327},
  doi={10.1006/jcta.1996.0080}
}

\bib{MR614400}{article}{
  author={Cuoco, Albert A.},
  author={Monsky, Paul},
  title={Class numbers in $\mathbb{Z}_p^d$-extensions},
  journal={Math.~Ann.},
  volume={255},
  number={2},
  date={1981},
  pages={235--258},
  doi={10.1007/BF01450674}
}

\bib{DV23}{article}{
  author={DuBose, Sage},
  author={Valli\`eres, Daniel},
  title={On $\mathbb{Z}_{\ell}^d$-towers of graphs},
  journal={Algebr.~Comb.},
  volume={6},
  number={5},
  date={2023},
  pages={1331--1346},
  doi={10.5802/alco.304}
}

\bib{Gon21}{thesis}{
  author={Gonet, Sophia},
  title={Jacobians of finite and infinite voltage covers of graphs},
  type={Ph.D. thesis},
  institution={University of Vermont},
  date={2021}
}

\bib{Gon22}{article}{
  author={Gonet, Sophia},
  title={Iwasawa theory of Jacobians of graphs},
  journal={Algebr.~Comb.},
  volume={5},
  number={5},
  date={2022},
  pages={827--848}
}

\bib{KM24}{article}{
  author={Kleine, S\"oren},
  author={M\"uller, Katharina},
  title={On the growth of the Jacobians in $\mathbb{Z}_p^l$-voltage covers of graphs},
  journal={Algebr.~Comb.},
  volume={7},
  number={4},
  date={2024},
  pages={1011--1038},
  doi={10.5802/alco.366}
}

\bib{MS03}{article}{
  author={Mizuno, Hirobumi},
  author={Sato, Iwao},
  title={On the weighted complexity of a regular covering of a graph},
  journal={J.~Combin.~Theory Ser.~B},
  volume={89},
  number={1},
  date={2003},
  pages={17--26},
  doi={10.1016/S0095-8956(03)00041-8}
}

\bib{monsky}{article}{
  author={Monsky, Paul},
  title={On $p$-adic power series},
  journal={Math.~Ann.},
  volume={255},
  number={2},
  date={1981},
  pages={217--227},
  doi={10.1007/BF01450672}
}

\bib{RV22}{article}{
  author={Ray, Anwesh},
  author={Valli\`eres, Daniel},
  title={An analogue of Kida's formula in graph theory},
  journal={Pure and Applied Mathematics Quarterly},
  volume={21},
  number={5},
  date={2025},
  pages={1853--1891}
}

\bib{sagemath}{misc}{
  author={{The Sage Developers}},
  title={Sage Math, the Sage Mathematics Software System (Version 2.5.0)},
  date={2024},
  url={https://www.sagemath.org}
}

\bib{Val21}{article}{
  author={Valli\`eres, Daniel},
  title={On abelian $\ell$-towers of multigraphs},
  journal={Ann.~Math.~Qu\'e.},
  volume={45},
  number={2},
  date={2021},
  pages={433--452},
  doi={10.1007/s40316-020-00152-4}
}

\bib{WFS11}{article}{
  author={Wu, Hongfeng},
  author={Feng, Rongquan},
  author={Sato, Iwao},
  title={Vertex weighted complexities of graphcoverings},
  journal={Algebra Colloq.},
  volume={18},
  number={1},
  date={2011},
  pages={129--138},
  doi={10.1142/S1005386711000071}
}

\end{biblist}
\end{bibdiv}
\end{document}